\renewcommand*{\backref}[1]{}
\renewcommand*{\backrefalt}[4]{{\tiny(%
		\ifcase #1 Not cited.%
		\or Cited on page~#2.%
		\else Cited on pages #2.%
		\fi%
		)}}
\newcommand{\N}{\mathbb{N}}
\newcommand{\Z}{\mathbb{Z}}
\newcommand{\R}{\mathbb{R}}
\renewcommand{\P}{\mathbb{P}}
\newcommand{\E}{\mathbb{E}}
\newcommand{\supp}[1]{\mathrm{supp}(#1)}
\newcommand{\neutralize}[1]{\expandafter\let\csname c@#1\endcsname\count@}
\newcommand{\thistheoremname}{}
\newtheorem*{genericthm*}{\thistheoremname}
\newenvironment{namedthm*}[1]
{\renewcommand{\thistheoremname}{#1}%
	\begin{genericthm*}}
	{\end{genericthm*}}
\theoremstyle{plain}
\newtheorem{thm}{Theorem}[section] 
\newtheorem{proposition}[thm]{Proposition}
\newtheorem{lemma}[thm]{Lemma}
\newtheorem*{claim*}{Claim}
\newtheorem{corollary}[thm]{Corollary}
\theoremstyle{definition}
\newtheorem{rem}[thm]{Remark}
\newtheorem*{rem*}{Remark}
\newtheorem{definition}[thm]{Definition} 
\newtheorem{question}[thm]{Question}
\def\N{{\mathbb N}}
\def\P{{\mathbb P}}
\def\R{{\mathbb R}}
\def\Z{{\mathbb Z}}
\def\cC{{\mathcal C}}
\let\epsilon\varepsilon
\newcommand{\dd}{\mathop{}\!\mathrm{d}}
\DeclarePairedDelimiter\norm{\lVert}{\rVert}
\DeclarePairedDelimiter\abs{\lvert}{\rvert}
\let\oldabs\abs
\def\abs{\@ifstar{\oldabs}{\oldabs*}}
\let\oldnorm\norm
\def\norm{\@ifstar{\oldnorm}{\oldnorm*}}
\newcommand\restr[2]{{
		\left.\kern-\nulldelimiterspace 
		#1 
		\vphantom{\big|} 
		\right|_{#2} 
}}
\newtheorem{thmA}{Theorem}
\DeclareRobustCommand\lind{l}
\setlist[enumerate, 1]{label = (\roman*), ref = \roman*}
\setlist[enumerate, 2]{label = \theenumi.\alph*}
\author{Martín Gilabert Vio} 
\address{Institut Camille Jordan, Université Claude Bernard Lyon 1, 43 boulevard du 11 novembre
	1918, 69622 Villeurbanne, France}
\email[Martín Gilabert Vio]{gilabert@math.univ-lyon1.fr \\ martingilabertvio@gmail.com}
\author{Cosmas Kravaris}
\address{Department of Mathematics, Princeton University, Princeton, NJ, USA}
\email[Cosmas Kravaris]{ck6221@princeton.edu}
\author{Eduardo Silva} 
\address{University of Münster, Einsteinstrasse 62, Münster 48149, Germany}
\email[Eduardo Silva]{eduardo.silva@uni-muenster.de, edosilvamuller@gmail.com}
\urladdr{\url{https://edoasd.github.io/eduardo_silva_math/}}
\subjclass[2010]{20F65, 20F67,20F69, 20F18}
\keywords{Poisson boundary, Thompson's group $T$, asymptotic entropy, circle homeomorphisms}
\title[The Poisson boundary of Thompson's group $T$ is not the circle]{The Poisson boundary of Thompson's group $T$ is not the circle}
\begin{document}

	\maketitle
	\begin{abstract} 
	Let $\mu$ be a nondegenerate probability measure with finite entropy on a countable group $G \leq \mathrm{Homeo}_+(S^1)$ of orientation-preserving homeomorphisms of the circle acting proximally, minimally and topologically nonfreely on $S^1$. We prove that the circle $S^1$ endowed with its unique $\mu$-stationary probability measure is not the Poisson boundary of $(G,\mu)$. When $G$ is Thompson's group $T$ and $\mu$ is finitely supported, this answers a question posed by B. Deroin \cite{Deroin2013} and A. Navas \cite{Navas2018}.
\end{abstract}
\section{Introduction}
Countable subgroups of the group $\mathrm{Homeo}_+(S^1)$ of orientation-preserving homeomorphisms of the circle $S^1$ are a well-studied family of groups that are connected with the theory of circularly-ordered groups \cite{BaikSamperton2018}, left-ordered groups \cite{DeroinNavasRivas2016}, foliations of $3$-manifolds \cite{Calegari2007}, and bounded cohomology \cite{Ghys1987}. We refer to \cite{Ghys2001, Navas2011} for many examples of groups acting on the circle and an overview of the fundamentals of the subject.

Thompson's group $T$ is the group of dyadic piecewise affine homeomorphisms of the circle $S^1\cong \R/\mathbb{Z}$. That is, an orientation-preserving homeomorphism $g \colon S^1\to S^1$ belongs to $T$ if there exists a finite subset of dyadic rationals $D \subset \Z[1/2]/\Z$ such that $g$ restricted to every connected component $C$ of $S^1 \setminus D$ is of the form $g(x) = 2^k x + b$, $x\in S^1$, for some $k\in \Z$ and $b\in \Z[1/2]/\Z$. This group was introduced by R. Thompson in unpublished notes \cite{Thompson} as the first example of a finitely presented infinite simple group, and has been extensively studied from algebraic, dynamical, and cohomological viewpoints, see, e.g.,  \cite{Brin1996,  GhysSergiescu1987, BrownGeoghegan1984}.

In this paper we focus on random walks on subgroups of $\mathrm{Homeo}_+(S^1)$, and our results below apply in particular to Thompson's group $T$. Given a probability measure $\mu$ on a countable group $G$, the \emph{(right) $\mu$-random walk on $G$} is the Markov chain $(w_n)_{n\ge 0}$ with state space $G$, transition probabilities $\mathbb{P}(w_{n+1}=g\mid w_n=h)=\mu(g^{-1}h)$, for each $g,h\in G$ and $n\ge 0$, and initial state $w_0=e_G$.
In what follows we work under the assumption that $\mu$ is \emph{nondegenerate}, meaning that the semigroup generated by its support coincides with the group $G$.

The \emph{Poisson boundary} of the pair $(G,\mu)$ is a probability space $(\partial_{\mu}G,\nu)$ that captures all the stochastically significant asymptotic behavior of sample paths of the $\mu$-random walk on $G$, and is defined as follows.  A \emph{$\mu$-boundary} of $G$ is a probability space $(X, \nu)$ equipped with a measurable $G$-action and a shift-invariant \emph{boundary map} $\xi \colon G^\N \to X$ on the space of trajectories $(G^{\N},\mathbb{P})$ of the $\mu$-random walk, such that $\xi_\ast \P = \nu$. The Poisson boundary $(\partial_{\mu}G,\nu)$ is the \emph{maximal} $\mu$-boundary, in the sense that any $\mu$-boundary is a $G$-equivariant measurable quotient of it, and is unique up to $G$-equivariant isomorphisms. See also Definition \ref{defn: Poisson boundary original def} for an equivalent definition of the Poisson boundary. The pair $(\partial_\mu G, \nu)$ is a probability space equipped with a measure-class preserving action of $G$ such that $\nu$ is $\mu$-stationary, that is, the equation $\nu=\mu*\nu=\sum_{g\in G}\mu(g)g_{*}\nu$ holds. Recall that a function $f \colon G \to \R$ is \emph{$\mu$-harmonic} if 
\(
f(g) = \sum_{h \in G} f(gh) \mu(h)
\) for all $g \in G$. The Poisson boundary encodes the space $H^{\infty}(G,\mu)$ of bounded $\mu$-harmonic functions on $G$ via the so-called \emph{Poisson transform} $L^{\infty}(\partial_{\mu}G,\nu)\to H^{\infty}(G,\mu)$ given by
\begin{equation}\label{eq: Poisson transform}
	\begin{aligned}
		F&\mapsto \left( f(g)\coloneqq \int_{\partial_{\mu} G}F(gx)\ d\nu(x), \text{ for } g\in G\right).
	\end{aligned}
\end{equation}

An action of a countable group $G$ on $S^1$ by homeomorphisms is called
\begin{itemize}
	\item \emph{minimal} if all the $G$-orbits are dense in $S^1$,
	\item \emph{proximal} if for every proper closed interval $I \subsetneqq S^1$ and all $\epsilon > 0$ there is $g \in G$ with diameter $\mathrm{diam}(g(I)) < \epsilon$, and
	\item \emph{nonelementary} if there is no $G$-invariant probability measure on $S^1$.
\end{itemize} A theorem of G. Margulis \cite{Margulis2000} (see also \cite{Antonov1984}) states that for any nonelementary subgroup $G \leq \mathrm{Homeo}_+(S^1)$ there exists a $G$-equivariant quotient of $S^1$, still homeomorphic to $S^1$, on which $G$ acts minimally and proximally. Thus, the study of any nonelementary group action on $S^1$ often reduces to the study of a minimal and proximal action (see Subsection  \ref{subsec:groupsCircle} for a precise statement). One can verify that the action of Thompson's group $T$ on $S^1$ is minimal and proximal, and hence nonelementary.

Consider a countable group $G$ acting proximally and minimally on $S^1$, and let $\mu$ be a nondegenerate probability measure on $G$. B. Deroin, V. Kleptsyn and A. Navas \cite{DeroinKleptsynNavas2007} show that there exists a unique $\mu$-stationary probability measure $\nu$ on $S^1$, and that for $\P$-almost every sample path $\mathbf{w} = (w_n)_{n \geq 0} \in G^{\N}$ of the $\mu$-random walk on $G$ there exists a point $\xi(\mathbf{w}) \in S^1$ such that
\(
\lim_{n\to \infty} (w_n)_\ast \nu =\delta_{\xi(\mathbf{w})}
\) in the weak-$\ast$ topology, where $\delta_{\xi(\mathbf{w})}$ denotes the point probability measure at $\xi(\mathbf{w})$. The measure $\nu$ coincides with the distribution of $\xi(\mathbf{w})$ on $S^1$, and therefore the space $(S^1,\nu)$ provides a $\mu$-boundary of $G$.

\subsection{Main results}
We say that an action of a group $G \leq \mathrm{Homeo}_+(S^1)$ on $S^1$ is \emph{topologically nonfree} if there exists $g \in G \setminus \{e_G\}$ whose set of fixed points has nonempty interior. Note that the action of Thompson's group $T$ on $S^1$ is topologically nonfree.

\begin{thmA} \label{thm:entropy}
	Let $G \leq \mathrm{Homeo}_+(S^1)$ be a countable group whose action on $S^1$ is minimal, proximal and topologically nonfree, and let $\mu$ be a finite entropy nondegenerate probability measure on $G$. Then $(S^1,\nu)$ is not the Poisson boundary of $(G, \mu)$.
\end{thmA}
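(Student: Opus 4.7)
My plan is to leverage Kaimanovich's entropy criterion: since $\mu$ has finite entropy, a $\mu$-boundary $(X,\nu)$ is the Poisson boundary of $(G,\mu)$ if and only if its Furstenberg entropy $h_{\mu}(X,\nu)$ equals the asymptotic entropy $h(\mu)$. Concretely, I would aim to exhibit a $\mu$-boundary $(Y,\eta)$ that is a strictly finer $G$-equivariant measurable quotient of the path space $(G^{\N},\P)$ than $(S^1,\nu)$. Since the Poisson boundary is, by definition, the maximal $\mu$-boundary, producing such a refinement alone rules out $(S^1,\nu)$ being the Poisson boundary; the entropy criterion then gives, as a byproduct, the strict inequality $h_{\mu}(S^1,\nu) < h(\mu)$.

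\textbf{Construction of the refinement.} Topological nonfreeness furnishes an element $g_0 \in G \setminus \{e_G\}$ and an open interval $J \subset S^1$ with $g_0|_J = \mathrm{id}$. Minimality and proximality then ensure that the translates $\{hJ : h \in G\}$ provide intervals of arbitrarily small diameter nested around every point of $S^1$, and that the rigid stabilisers $G^{\mathrm{rig}}_U \coloneqq \{g \in G \colon g|_{S^1 \setminus U} = \mathrm{id}\}$ are non-trivial for each $U$ in the orbit of $J$. Along a $\P$-typical sample path $\mathbf{w} = (w_n)$, proximality and the convergence $(w_n)_{\ast}\nu \to \delta_{\xi(\mathbf{w})}$ guarantee that $w_n^{-1}$ expands a shrinking chain of such intervals nested around $\xi(\mathbf{w})$ into a family of intervals spreading out across $S^1$; I would record, in a shift-invariant way, the asymptotic ``germ profile'' of how these rigid stabilisers are conjugated along the walk. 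This yields a tail-measurable extension $\tilde{\xi}\colon G^{\N} \to Y$ covering $\xi$, defining a $\mu$-boundary $(Y,\eta)$ that factors $G$-equivariantly onto $(S^1,\nu)$.

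\textbf{Main obstacle.} The heart of the argument is to show that the quotient map $Y \to S^1$ is strictly finer, i.e.\ that its fibres are non-trivial on a positive $\eta$-measure set, rather than $\tilde{\xi}$ being a reparametrisation of $\xi$. I would attack this by a perturbation/coupling argument taking advantage of the commutation of rigid stabilisers supported in disjoint $G$-translates of $J$: these commutations provide enough freedom to couple, with positive $\P$-probability, two sample paths that share the same limit $\xi(\mathbf{w}) \in S^1$ but realise distinct asymptotic germ profiles. Nondegeneracy of $\mu$ would be used to ensure such perturbations lie in the support of $\P$, while the finite entropy hypothesis enters via the entropy criterion. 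Establishing rigorously that the germ profile varies independently of $\xi(\mathbf{w})$ along a positive-probability family of sample paths is the step I would expect to be most delicate, requiring careful interplay between the combinatorics of the orbit $\{hJ\}_{h\in G}$ and the Borel--Cantelli behaviour of the walk near its limit.
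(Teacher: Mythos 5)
There is a genuine gap: your proposal is a strategy outline whose two essential steps are both left unproven, and the strategy itself is one the paper explicitly cannot carry out at the level of generality of Theorem \ref{thm:entropy}. First, the refined boundary $(Y,\eta)$ is never actually constructed. The ``asymptotic germ profile'' is undefined, and, more seriously, its existence requires some stabilization statement along the sample path (the analogue of stabilization of lamp configurations). For a general countable subgroup of $\mathrm{Homeo}_+(S^1)$ and a measure assumed only to be nondegenerate with finite entropy, there is no mechanism forcing such data to converge: in the paper this kind of construction is only achieved in Theorem \ref{thm:lamps}, for subgroups of $\mathrm{PAff}_+(S^1)$, where the recorded data are derivative jumps at breakpoints and where stabilization is proved via transience of the induced walk on orbits \emph{together with} the moment condition $\sum_g \mu(g)\abs{\mathbf{Br}_g}<\infty$ and a Borel--Cantelli argument. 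Neither the piecewise-affine structure nor that moment condition is available under the hypotheses of Theorem \ref{thm:entropy}. Second, even granting the construction, the crucial claim that the fibres of $Y\to S^1$ are nontrivial on a positive-measure set is exactly what you defer as ``the most delicate step''; no coupling argument is actually given. A symptom of the problem is that in your scheme the finite-entropy hypothesis does no work at all (as you note, maximality alone would conclude), whereas it is essential to the theorem as proved.

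The paper's actual route is different and avoids constructing any explicit finer boundary. It uses Kaimanovich's conditional entropy criterion (Theorem \ref{thm: conditional entropy alternative formulation}) and proves that the mean conditional entropy $H_{(S^1,\nu)}(\alpha_n)$ grows linearly, via a conditional version of Erschler's method (Theorem \ref{thm:relEntropy}). Concretely: one finds $a\neq e_G$ supported in a small interval $J$, shows via exponential contraction in mean (Theorem \ref{thm:meanXi}, Corollary \ref{cor:meanBoundary}) and equidistribution (Proposition \ref{prop:equidistributionXi}) that with positive probability there are linearly many times at which $w_{i_r-1}(J)$ dominates all earlier translates of $J$ and $w_{i_r}^{-1}(\xi)\notin J$; Lemma \ref{lemma:good} then shows that toggling the increment between $a$ and $e_G$ at these times produces $2^k$ distinct endpoints, and Lemma \ref{lemma:radonNikodym} shows these toggles have the unconditioned probabilities even after conditioning on $\xi$. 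This yields $h(\mu\mid (S^1,\nu))>0$. If you want to salvage your approach, you would essentially be proving Theorem \ref{thm:lamps}, which requires restricting the class of groups and imposing a moment condition; it does not yield Theorem \ref{thm:entropy}.
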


In the particular case of finitely supported nondegenerate probability measures on $T$, Theorem \ref{thm:entropy} answers a question asked by B. Deroin \cite[Item (2) in Section 6]{Deroin2013} and by A. Navas in his ICM survey \cite[Question 19]{Navas2018}. One should contrast Theorem \ref{thm:entropy} with \cite[Theorem 1.1]{Deroin2013}, where B. Deroin proves that for subgroups $G \leq \mathrm{Diff}^2(S^1)$ within a certain class, that contains in particular any cocompact lattice in $\mathrm{PSL}_2(\R)$, the space $(S^1,\nu)$ is the Poisson boundary of $(G, \mu)$ when $\mu$ satisfies suitable moment conditions. These groups fall within a family that is conjectured to be composed only of Fuchsian groups and virtually free groups \cite[Table 1]{AlvarezFilimonovKleptsynMalicetMeninoCotonNavasTriestino2019}, and hence their Poisson boundaries could alternatively be described using their Gromov boundaries; see Subsection \ref{subsec:background}.

Theorem \ref{thm:entropy} is related to the well-known open problem \cite{CannonFloydParry1996} on whether Thompson's group $F$, the group of dyadic piecewise affine homeomorphisms of the interval $[0,1]$, is amenable. Indeed, the action of a countable group $G$ on its Poisson boundary $(\partial_{\mu}G,\nu)$ is amenable \cite[Theorem 5.2]{Zimmer1978}, and hence for $\nu$-almost every $x \in \partial_{\mu}G$ the stabilizer subgroup $G_x \leq G$ is amenable (see \cite[Corollary 5.3.33]{AnantharamanDelarocheRenault2000}). If the circle were the Poisson boundary of $T$ then we would conclude that $F$ is amenable, since for each $x\in S^1$ the stabilizer $T_x \leq T$ contains a copy of $F$. Theorem \ref{thm:entropy} implies that this strategy does not work whenever the probability measure $\mu$ has finite entropy.

The proof of Theorem \ref{thm:entropy} is sketched in Subsection \ref{subsec:background}, and relies on the conditional entropy criterion of V. Kaimanovich \cite{Kaimanovich1985} together with a conditional version of a method used by A. Erschler to show the positivity of asymptotic entropy \cite{Erschler2003}. As is often the case with entropy methods for Poisson boundaries, our proof does not provide explicit bounded $\mu$-harmonic functions that do not arise from the $\mu$-boundary $(S^1, \nu)$ or, equivalently, an explicit $\mu$-boundary that is not a $G$-equivariant quotient of $(S^1, \nu)$. Our second theorem gives precisely this information for a more restricted class of subgroups of $\mathrm{Homeo}_{+}(S^1)$, which still includes Thompson's group $T$. 

We denote by $\mathrm{PAff}_+(S^1)$ the group of piecewise affine orientation-preserving homeomorphisms of $S^1 \cong \R/\Z$ whose derivative has finitely many discontinuity points. Given $g \in \mathrm{PAff}_+(S^1)$ denote by $\mathbf{Br}_g\subseteq S^1$ the (finite) set of discontinuities of its derivative, which we call the \emph{breakpoints} of $g$. For a countable group $G \leq \mathrm{PAff}_+(S^1)$ with a minimal, proximal and topologically nonfree action on $S^1$, we define $\mathbf{Br}=\cup_{g \in G}\mathbf{Br}_g$ the \emph{set of breakpoints of $G$} and show that, with respect to an appropriate action of $G$, there is a $\mu$-stationary probability measure $\widetilde{\nu}$ on $\R^{\mathbf{Br}}$ such that $(\R^\mathbf{Br}, \widetilde{\nu})$ is a $\mu$-boundary of $G$ (see Subsection \ref{subsection: proofs}). We call this space the \emph{breakpoint boundary} of $G$. This construction is analogous to the boundary of Thompson's group $F$ constructed by V. Kaimanovich in \cite{Kaimanovich2017}.

\begin{thmA}\label{thm:lamps} Let $G$ be a countable subgroup of $\mathrm{PAff}_+(S^1)$ whose action on $S^1$ is minimal, proximal, and topologically nonfree, and let $\mu$ be a nondegenerate probability measure on $G$ such that $\sum_{g \in G} \mu(g) \abs{\mathbf{Br}_g}<\infty$. Then the breakpoint boundary $(\R^\mathbf{Br}, \widetilde{\nu})$ is not a $G$-equivariant quotient of $(S^1, \nu)$. In particular, $(S^1, \nu)$ is not the Poisson boundary of~$(G, \mu)$.
\end{thmA}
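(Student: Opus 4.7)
The plan is to argue by contradiction: suppose there exists a $G$-equivariant measurable map $\phi\colon(S^1,\nu)\to(\R^{\mathbf{Br}},\widetilde{\nu})$ realizing the breakpoint boundary as a quotient of the $\mu$-boundary $(S^1,\nu)$, and derive a contradiction. The ``In particular'' statement then follows at once, since the Poisson boundary dominates every $\mu$-boundary. The key observation is that while $g\in G$ acts on $S^1$ by homeomorphisms, it acts on $\R^{\mathbf{Br}}$ by an \emph{affine} transformation twisted by the slope-jump cocycle $\lambda(g,b)\coloneqq \log D^{+}g(b)-\log D^{-}g(b)$. Topological nonfreeness will produce an element $g_0\in G$ fixing an open interval of $S^1$ pointwise yet acting on $\R^{\mathbf{Br}}$ without fixed points, which contradicts equivariance of $\phi$.

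To produce $g_0$, I first invoke topological nonfreeness to pick $g_0 \in G\setminus\{e_G\}$ whose fixed set has nonempty interior, and let $U$ be a connected component of $\mathrm{Fix}(g_0)^{\circ}$. Since $g_0$ is piecewise affine, equal to the identity on $U$, and $U$ is \emph{maximal}, at at least one endpoint $b_0\in \partial U$ the inward one-sided slope of $g_0$ equals $1$ while the outward slope must differ from $1$; otherwise continuity and piecewise affineness would extend the interval of fixed points strictly beyond $U$, contradicting maximality. Consequently $b_0 \in \mathbf{Br}_{g_0}\subseteq \mathbf{Br}$, $g_0(b_0)=b_0$, and $\lambda(g_0,b_0)\neq 0$.

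Next, the $G$-action on $\R^{\mathbf{Br}}$ defining the breakpoint boundary is affine, of the form
\[
(g\cdot L)(b) = L(g^{-1}b) + c(g,b),
\]
for a cocycle $c$ built from $\lambda$ that satisfies $c(g_0,b_0)\neq 0$ precisely because $\lambda(g_0,b_0)\neq 0$ and $g_0$ fixes $b_0$. Evaluating the fixed-point equation $L=g_0\cdot L$ at $b=b_0$ and using $g_0^{-1}(b_0)=b_0$ therefore collapses to $c(g_0,b_0)=0$, a contradiction. Hence $\mathrm{Fix}_{\R^{\mathbf{Br}}}(g_0)=\emptyset$.

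Combining these ingredients with the hypothetical $\phi$ concludes the argument: after restricting to a $G$-invariant $\nu$-conull subset (possible since $G$ is countable), the equivariance equation applied to $g=g_0$ gives $\phi(x)=g_0\cdot\phi(x)$ for every $x\in U$, so $\phi(x)\in \mathrm{Fix}_{\R^{\mathbf{Br}}}(g_0)$ on a set of $\nu$-measure at least $\nu(U)$. By \cite{DeroinKleptsynNavas2007} the unique $\mu$-stationary measure $\nu$ is fully supported on $S^1$, so $\nu(U)>0$, contradicting $\mathrm{Fix}_{\R^{\mathbf{Br}}}(g_0)=\emptyset$. The main obstacle I anticipate is pinning down the precise affine $G$-action and cocycle on $\R^{\mathbf{Br}}$ so that, on the one hand, the moment condition $\sum_{g} \mu(g)\abs{\mathbf{Br}_g}<\infty$ yields a bona fide $\mu$-boundary, and on the other hand, the algebraic identity at $b_0$ rigidly obstructs fixed points; once this setup is codified, the remainder of the argument is forced by the piecewise affine structure of $G$ and the full support of~$\nu$.
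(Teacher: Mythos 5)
Your proof is correct, and it takes a genuinely different route from the paper's. The paper argues via harmonic functions: from the breakpoint boundary it builds, for each $n$, a bounded $\mu$-harmonic function $f_n$ with $f_n(e_G)>1/2$ together with elements $a_n$ whose supports shrink and with $f_n(a_n)\to 0$ (Lemma \ref{lemma:gn}); if $(\R^{\mathbf{Br}},\widetilde{\nu})$ were a quotient of $(S^1,\nu)$, the isometry property of the Poisson transform (Theorem \ref{thm: isometry}) would force $\abs{f_n(a_n)-f_n(e_G)}\le 2\nu(\mathrm{supp}(a_n))\norm{h_n}_\infty\to 0$, a contradiction. You instead exhibit a single element $g_0$ fixing an open interval $U$ pointwise whose affine action on $\R^{\mathbf{Br}}$ has no fixed point at all: at an endpoint $b_0$ of $U$ your maximality argument correctly yields $g_0(b_0)=b_0$ with a nontrivial slope jump, so $b_0\in\mathbf{Br}_{g_0}$ and $(g_0\cdot L)(b_0)=L(b_0)+\cC_{g_0}(b_0)$ with $\cC_{g_0}(b_0)\neq 0$; equivariance of a hypothetical factor map $\phi$ then forces $\phi(x)=g_0\cdot\phi(x)$ for almost every $x$ in $U$, which has positive measure since $\nu$ has full support. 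Both arguments rely on the same Section \ref{sec:harmonic} input (transience plus stabilization under the moment hypothesis) to know that $(\R^{\mathbf{Br}},\widetilde{\nu})$ with the twisted action is a bona fide $\mu$-boundary, which you correctly defer to the setup. What the paper's route buys is a supply of explicit bounded harmonic functions not arising from $(S^1,\nu)$, which the introduction advertises as the added value of Theorem \ref{thm:lamps} over Theorem \ref{thm:entropy}; what your route buys is brevity (no contracting sequence $t_n$, no Poisson-transform isometry) and a formally stronger conclusion, namely that there is no $G$-equivariant measurable map from $(S^1,\nu)$ to $\R^{\mathbf{Br}}$ whatsoever, irrespective of the target measure.
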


The class of groups to which Theorem \ref{thm:entropy} applies is larger than that considered in Theorem \ref{thm:lamps}. Indeed, note that $\mathrm{PSL}_2(\Z[\sqrt{2}])$ acts minimally and proximally on $S^1$ through its natural projective action, and it follows from \cite[Theorem 1.4]{Cornulier2021} that it does not embed into $\mathrm{PAff}_+(S^1)$. Consider the group $G$ of all piecewise-$\mathrm{PSL}_2(\Z[\sqrt{2}])$ homeomorphisms of $S^1$ with breakpoints in the set of fixed points of hyperbolic elements in $\mathrm{PSL}_2(\Z[\sqrt{2}])$. Then $G$ is countable, satisfies the hypotheses of Theorem \ref{thm:entropy} and is not conjugate to a subgroup of $\mathrm{PAff}_+(S^1)$ since it contains $\mathrm{PSL}_2(\Z[\sqrt{2}])$.

\subsection{Further background on identification of Poisson boundaries} \label{subsec:background}
Given a probability measure $\mu$ on a countable group $G$, a natural question is to find an explicit model for the Poisson boundary of $(G,\mu)$. That is, to identify the Poisson boundary of the random walk with a concrete $\mu$-boundary expressed in terms of geometric, combinatorial, or algebraic properties of $G$. Currently, the main tools for studying this question are based on entropy. For a probability measure $\mu$ on a group $G$, the \emph{Shannon entropy} of $\mu$ is defined as $H(\mu)\coloneqq -\sum_{g\in G}\mu(g)\log(\mu(g))$. The \emph{asymptotic entropy} of a probability measure $\mu$ is defined by \(h(\mu)\coloneqq\lim_{n\to \infty} H(\mu^{*n})/n.\) This quantity was introduced by A. Avez \cite{Avez1972}, who proved that if $\mu$ is finitely supported and $h(\mu)=0$, then the Poisson boundary of $(G,\mu)$ is trivial. The \emph{entropy criterion} of Derriennic \cite{Derrienic1980} and Kaimanovich-Vershik \cite[Theorem 1.1]{KaimanovichVershik1983} states that if $H(\mu)<\infty$, then $h(\mu)>0$ if and only $(G,\mu)$ has a nontrivial Poisson boundary. This result was later extended by V. Kaimanovich, who proved that a $\mu$-boundary of $G$ is the Poisson boundary if and only if the sequence of \emph{mean conditional} entropies at time $n$ of $\mu$ (see Definition \ref{defn: average conditional entropy}) grows sublinearly \cite[Theorem 2]{Kaimanovich1985} \cite[Theorem 4.6]{Kaimanovich2000} (see also Theorem \ref{thm: conditional entropy alternative formulation} below). It is important to note that these criteria have two main restrictions: the first is that the hypothesis $H(\mu)<\infty$ is crucial and the criteria do not work in a general context for measures with infinite entropy. The second restriction for the entropy criteria is that one needs to identify by other means a $\mu$-boundary of $G$ that serves as a potential candidate for the Poisson boundary. There are families of groups for which one can prove that some random walks on them have a nontrivial Poisson boundary by using the entropy criterion but for which there is no known nontrivial $\mu$-boundary. An example of this is the wreath product $\Z/2\Z\wr \Z^3$, which has nontrivial Poisson boundary for each nondegenerate probability measure with finite entropy \cite[Theorem 3.1]{Erschler2003}.

Next, we recall results on the identification of the Poisson boundary for two families of groups: Gromov-hyperbolic groups and wreath products. For a more comprehensive list of results regarding the identification of Poisson boundaries of random walks on groups, we refer to \cite{Erschler2010} and \cite[Section 3.3.5]{SilvaPhdThesis2024}.

\subsubsection*{Gromov-hyperbolic groups.}
The complete description of nontrivial Poisson boundaries goes back to E. Dynkin and M. Maljutov \cite{DynkinMaljutov1961}, who proved that for any non-abelian free group and any probability measure supported on a free generating set, the corresponding Poisson boundary can be identified with the space of infinite reduced words of the free group endowed with its unique stationary measure. This result was extended to all probability measures with finite support by Y. Derriennic \cite{Derrienic1975}. More generally, A. Ancona \cite{Ancona1987} proved that the Poisson boundary of a non-elementary Gromov-hyperbolic group with respect to a finitely supported measure coincides with its Gromov boundary endowed with the unique stationary measure. V. Kaimanovich later used the conditional entropy criterion to generalize the latter result to hold for any nondegenerate probability measure with finite entropy and finite first logarithmic moment \cite[Theorem 8]{Kaimanovich1994}, \cite[Theorems 7.4 and 7.7]{Kaimanovich2000}. Recently, this description was proved to hold for all nondegenerate measures with finite entropy by K. Chawla, B. Forghani, J. Frisch and G. Tiozzo \cite[Theorem 1.1]{ChawlaForghaniFrischTiozzo2025}. We note that the two latter results were new even in the case of free groups.

\subsubsection*{Wreath products.}
Given countable groups $A$ and $B$, the \emph{wreath product} $A \wr B$ is the semidirect product $\left( \bigoplus_B A \right) \rtimes B$ where $B$ acts on $\bigoplus_B A$ by translations. The main tool used to exhibit a nontrivial $\mu$-boundary of random walks on wreath products is the \emph{stabilization of lamp configurations}: suppose that $\mu$ is a probability measure on $A\wr B$ such that for $\P$-almost every trajectory $\left\{(\varphi_n,X_n)\right\}_{n\ge 0}$ of the $\mu$-random walk on $A\wr B$ and for each $b\in B$, there exists $N\ge 1$ such that for every $n\ge N$, we have $\varphi_{n}(b)=\varphi_N(b)$. From this one can deduce the existence of a $\mu$-stationary probability measure $\nu$ on $\prod_{B}A$ such that $(\prod_{B}A,\nu)$ is a $\mu$-boundary of $A\wr B$. A sufficient condition that guarantees the stabilization of lamp configurations is that $\mu$ has a finite first moment with respect to a word metric on $A\wr B$ and that the projection of $\mu$ to $B$ defines a transient random walk \cite[Section 6]{KaimanovichVershik1983}, \cite[Theorem 3.3]{Kaimanovich1991}, \cite[Theorem 2.9]{KarlssonWoess2007}, \cite[Lemma 1.1]{Erschler2011}. In particular for $B=\Z^d$, $d\ge 1$, if $H(\mu)<\infty$ and $\mu$ satisfies the stabilization of lamp configurations, the space $(\prod_B A,\nu)$ is the Poisson boundary of $(A\wr B,\mu)$ \cite[Theorem 3.6.6]{Kaimanovich2001}, \cite[Theorem 1]{Erschler2011},  \cite[Theorems 1.1 \& 5.1]{LyonsPeres2021}, \cite[Theorem 1.3 \& Corollary 1.4]{FrischSilva2023}. In contrast, there are nondegenerate probability measures on $A \wr \Z^d$, $d\ge 3$, with an infinite first moment and finite entropy such that the lamp configuration does not stabilize, and yet the Poisson boundary is nontrivial \cite[Proposition 1.1]{Kaimanovich1983}, \cite[Section 6]{Erschler2011}, \cite[Section 5]{LyonsPeres2021}. For such probability measures, there are no known constructions of nontrivial $\mu$-boundaries.

\subsubsection*{A natural boundary for Thompson's group $T$.} The $\mu$-boundary $(S^1, \nu)$  can be considered as a ``natural'' candidate for the Poisson boundary of Thompson's group $T$ in the following sense. A faithful action $G \curvearrowright X$ of a group $G$ on a locally compact Hausdorff perfect space $X$ by homeomorphisms is a \emph{Rubin action} if for every open subset $U \subseteq X$ and every $x \in U$, the closure of the orbit of $x$ under the action of the subgroup \(
\{ g \in G \mid \restr{g}{X \setminus U} = \mathrm{id}_{X \setminus U}\}
\) contains a neighborhood of $x$. A theorem by M. Rubin \cite[Corollary 3.5]{Rubin1989} states that there exists a unique Rubin action up to $G$-equivariant homeomorphisms
(see also \cite{BelkElliotMatucci2022} for a recent short proof). One can phrase this result as saying that a group admits at most one ``sufficiently rich'' \emph{microsupported} action on a compact space. Notice that the action of Thompson's group $T$ on $S^1$ is Rubin.

To the best of our knowledge, Thompson's group $T$ is the first example of a group such that the Poisson boundary of a simple random walk is strictly larger than a ``natural'' candidate $\mu$-boundary.
One can compare this result with the recent work of K. Chawla and J. Frisch \cite{ChawlaFrisch2025}, from which it follows that on any non-abelian free group there are probability measures with infinite entropy such that the Poisson boundary is not the Gromov boundary of the free group. 

\subsection{Structure of the proofs}\label{subsection: proofs}

The proof of Theorem \ref{thm:entropy} follows similar steps to a method used by A. Erschler in \cite{Erschler2003} (see also Theorem \ref{thm:erschler}) to show that $h(\mu)>0$ for a probability measure $\mu$ on a group $G$. This method consists of verifying that the sequence $(H(\mu^{\ast n}))_{n \geq 0}$ grows linearly under the following condition: there exist $p, c \in (0,1)$ and $a \in \mathrm{supp}(\mu) \setminus \{e_G\}$ such that for every $n \in \N$, with probability at least $p$ we can choose elements $b_1,\ldots, b_{k+1} \in G$ and times $1 \leq i_1 < i_2 < \cdots < i_k \leq n$ with $k \geq cn$ such that the random walk $w_n$ at step $n$ can be expressed \emph{in a unique way} as \[
w_n = b_1 \epsilon_1 b_2 \cdots b_k \epsilon_k b_{k+1},
\]  where $\epsilon_j \in \{a, e_G\}$ is the increment of the random walk at time $i_j$ for every $1 \leq j \leq k$.

We state a conditional version of Erschler's method in Theorem \ref{thm:relEntropy}, which gives sufficient conditions to ensure that the sequence of conditional entropies at time $n$ with respect to a given $\mu$-boundary $\mathbf{X}$ grows linearly on $n$. Then, thanks to Kaimanovich's conditional entropy criterion and with the hypothesis of $H(\mu)<\infty$, we conclude that the space $\mathbf{X}$ not the Poisson boundary of $(G, \mu)$. The proof of Theorem \ref{thm:entropy} consists on applying Theorem \ref{thm:relEntropy} to the $\mu$-boundary $(S^1,\nu)$, and it follows the next steps.

\begin{itemize}
	\item The main dynamical input that guarantees that we can apply Theorem \ref{thm:relEntropy} in the context of Theorem \ref{thm:entropy} is Proposition \ref{prop:linear}, which asserts that for each sufficiently small interval $I \subset S^1$, there are linearly many intervals in the sequence $I, w_1(I), \ldots, w_n(I)$ such that each of these intervals \emph{dominates} all the ones preceding it. Here, when $I_1, I_2 \subset S^1$ are closed intervals we say that $I_1$ \emph{dominates} $I_2$ if they are either disjoint or if the interior of $I_1$ contains $I_2$.
	\item By the hypotheses on the action of $G$ on $S^1$, Proposition \ref{proposition:small support} ensures that there exists a nontrivial element $a \in G \setminus \{e_G\}$ such that  
	\(
	\overline{\{x \in S^1 : a(x) \neq x\}}\subseteq I.
	\) 
	\item The nondegeneracy assumption on $\mu$ guarantees that, up to replacing $\mu$ with a convolution power $\mu^{\ast s}$ for some $s\ge 1$, we may assume that $a$ belongs to the support of $\mu$. From the above, we obtain $c \in (0,1)$ such that, in expectation, there are at least $k \geq cn$ times $1 \leq i_1 < \cdots < i_k \leq n$ such that $w_{i_j - 1}(I)$ dominates $w_{i_l- 1}(I)$ and $w_{i_j-1}^{-1}w_{i_j} \in \{a, e_G\}$ for all $1 \leq l < j \leq k$.
	\item Given $n \in \N_+$, fix a trajectory $\mathbf{w}$ such that there are at least $k \leq cn$ times $1 \leq i_1 < \cdots < i_k \leq n$ as in the previous step. Write $w_n = b_1 \epsilon_1 b_2 \cdots b_k \epsilon_k b_{k+1}$ where the $\epsilon_1,\ldots, \epsilon_k \in \{a, e_G\}$ are the jumps at times $i_1,\ldots, i_k$. Then, Lemma \ref{lemma:good} says that whenever $\epsilon_1',\ldots, \epsilon_k'$ range over $\{a, e_G\}$, the resulting elements $b_1 \epsilon_1' b_2 \cdots b_k \epsilon_k' b_{k+1}$ are pairwise distinct. From this, we verify the hypotheses of Theorem \ref{thm:relEntropy} and finish the proof.
\end{itemize}

The proof of Theorem \ref{thm:lamps} in Section \ref{sec:harmonic} is different and does not rely on entropy techniques. Given a countable group $G \leq \mathrm{PAff}_+(S^1)$, the set $\mathbf{Br}$ of breakpoints of elements of $G$ is countable, and there is a  map $\cC \colon G \to \R^\mathbf{Br}$ given by $$\cC_g(x) \coloneqq \log\left((g^{-1})'(x^+)\right) - \log\left((g^{-1})'(x^-)\right) \text{ for each }x\in \mathbf{Br},$$ that records the discontinuities of the derivative of $g\in G$. The map $\cC:G\to \R^{\mathbf{Br}}$ is an additive cocycle for the natural action of $G$ on $\R^\mathbf{Br}$ obtained from the action of $G$ on $\mathbf{Br}$ (see Equation \eqref{eq:equivariance}). The breakpoint boundary is defined as follows.
\begin{itemize}
	\item Lemma \ref{lemma:transience} shows that the trajectory in $\mathbf{Br}$ of any breakpoint through the $\mu$-random walk is transient. This relies on a general comparison lemma for Markov operators \cite{BaldiLohouePeyriere1977}.
	\item  For $\P$-almost every $\mathbf{w} = (w_n)_{n \geq 0}$ the configurations $\cC_{w_n}$ converge pointwise to a configuration $\cC_\infty(\mathbf{w}) \in \R^\mathbf{Br}$ as $n\to \infty$. This defines an associated hitting measure $\widetilde{\nu}$ on $\R^\mathbf{Br}$,
	
	\item We define another action of $G$ on $\R^{\mathbf{Br}}$, where for each $g\in G$ and $\mathcal{C}\in \R^{\mathbf{Br}}$ we define $(g\cdot \mathcal{C})(x)=\cC_g(x)+\cC(g^{-1}x)$ for every $x\in \mathbf{Br}.$ With respect to this action, the space $(\R^\mathbf{Br},\widetilde{\nu})$ is a $\mu$-boundary of $G$. 
\end{itemize}
Using the breakpoint boundary $(\R^\mathbf{Br},\widetilde{\nu})$, we find for each $n\ge 1$ a bounded $\mu$-harmonic function $f_n \colon G \to \R$ and an element $a_n \in G$ with small support such that there is a constant $K>0$ with $\abs{f_n(a_n) - f_n(e_G)} > K$ for all $n \in \N$. This cannot occur for bounded harmonic functions obtained from $(S^1,\nu)$.

We remark that the construction outlined in the first two steps of the proof was used by B. Stankov to prove that, under the same moment condition on $\mu$ as in Theorem \ref{thm:lamps}, any subgroup of Monod's nonamenable group $H(\Z)$ of piecewise-$\mathrm{PSL}_2(\Z)$ homeomorphisms of the line is either locally solvable or has nontrivial Poisson boundary \cite[Theorem 1.2]{Stankov2021}.

\subsection{Organization}
In Section \ref{sec:preliminaries} we recall background material on random walks on groups, Poisson boundaries, the conditional entropy criterion, and properties of groups acting on the circle. In Section \ref{sec:entropy} we describe the method of A. Erschler (Theorem \ref{thm:erschler}) used to prove positivity of asymptotic entropy and our conditional version of it (Theorem \ref{thm:relEntropy}). In Section \ref{sec:good} we prove Lemma \ref{lemma:good}, which gives a sufficient condition for us to verify the hypotheses of Theorem \ref{thm:relEntropy}. Next, in Section \ref{sec:mean} we prove two quantitative statements for random walks on $S^1$ that are key to our results: Theorem \ref{thm:meanXi} and Proposition \ref{prop:equidistributionXi}. Afterwards, in Section \ref{sec:linear} we prove Proposition \ref{prop:linear} that guarantees that in expectation there will be linearly many dominating intervals, and use it in Section \ref{sec:proofA} to prove Theorem \ref{thm:entropy}. Finally, in Section \ref{sec:harmonic} we define the breakpoint boundary and prove Theorem \ref{thm:lamps}.

\subsection{Acknowledgements}
Martín Gilabert Vio acknowledges support from the ANR project Gromeov (ANR-19-CE40-0007), and thanks Nicolás Matte Bon for useful conversations. Eduardo Silva is funded by the Deutsche Forschungsgemeinschaft (DFG, German Research Foundation) under Germany's Excellence Strategy EXC 2044 –390685587, Mathematics Münster: Dynamics–Geometry–Structure.
\section{Preliminaries} \label{sec:preliminaries}
\subsection{Random walks on groups and Poisson boundaries} \label{subsec:poisson}
Let $\mu$ be a probability measure on a countable group $G$, and consider the probability measure $\P$ obtained as the push-forward of the Bernoulli measure $\mu^{\mathbb{N}}$ through the map
\begin{equation*}
	\begin{aligned}
		G^{\mathbb{N}}&\to G^{\mathbb{N}}\\
		(g_1,g_2,g_3,\ldots)&\mapsto (w_0,w_1,w_2,w_3,\ldots)\coloneqq (e_G,g_1,g_1g_2,g_1g_2g_3,\ldots).
	\end{aligned}
\end{equation*}
The space $(G^{\mathbb{N}},\P)$ is called the \emph{space of sample paths} or the \emph{space of trajectories} of the $\mu$-random walk. We denote by $\sigma \colon G^\N \to G^\N$ the \emph{shift map} $\sigma((w_n)_{n \geq 0}) = (w_{n+1})_{n \geq 0}$.

The Poisson boundary of the $\mu$-random walk on $G$ was already defined in the introduction as the maximal $\mu$-boundary of $G$. An alternative definition is the following.
\begin{definition}\label{defn: Poisson boundary original def} Let $G$ be a countable group, and let $\mu$ be a probability measure on $G$. Two sample paths $\mathbf{w},\mathbf{w^\prime} \in G^\N$ are said to be equivalent if there exist $i,j \geq 0$ such that $\sigma^i(\mathbf{w}) = \sigma^j(\mathbf{w^\prime})$. Consider the measurable hull associated with this equivalence relation, that is, the $\sigma$-algebra formed by all measurable subsets of the space of trajectories $(G^{\mathbb{N}},\P)$ which are unions of the equivalence classes of $\sim$ up to $\P$-null sets. The associated quotient space is called the \emph{Poisson boundary} of the random walk $(G,\mu)$.
\end{definition}
For further equivalent definitions of the Poisson boundary we refer to the articles \cite{KaimanovichVershik1983}, \cite[Section 1]{Kaimanovich2000} and the references therein. For an overview of the study of Poisson boundaries and random walks on groups we refer to the surveys \cite{Furman2002,Erschler2010,Zheng2022}.

Denote by $H^\infty(G, \mu)$ the space of \emph{bounded harmonic functions}, that is, of bounded functions $f \colon G \to \R$ such that \( f(g) = \sum_{h \in G} f(gh) \mu(h) \) for all $g \in G$. The vector space $H^\infty(G, \mu)$ becomes a Banach space by equipping it with the $l^\infty$ norm. For a probability space $(X, \nu)$ endowed with a measurable action of $G$, we say that $\nu$ is \emph{$\mu$-stationary} if 
\(\nu= \mu \ast \nu \coloneqq \sum_{g \in G} \mu(g) g_\ast \nu. \) In this case, for each $F\in L^{\infty}(X,\nu)$ one can define a $\mu$-harmonic function $f\in H^{\infty}(G,\mu)$ by
\[
f(g) = \int_X F(x) \dd g_\ast \nu=\int_X F(gx) \dd \nu, \text{ for each }g\in G. 
\]  The map $F \in L^{\infty}(X,\nu) \mapsto f \in H^{\infty}(G,\mu)$ is called the \emph{Poisson transform associated with $(X,\nu)$}. When $(X,\nu)$ is the Poisson boundary of $(G,\mu)$, this map is the Poisson transform defined in Equation \eqref{eq: Poisson transform} in the introduction. 

We recall a standard result on the Poisson transform associated to a $\mu$-boundary, which will be used in Section \ref{sec:harmonic} within the proof of Theorem \ref{thm:lamps}. Its proof goes back to the work of H. Furstenberg (see the second paragraph on page 373 of \cite{Furstenberg1963}) and R. Azencott \cite[Proposition I.2]{Azencott1970}, and can be found, for example, in \cite[Theorem 4.4]{Glasner1976}. 
\begin{thm}\label{thm: isometry}
	Let $\mu$ be a probability measure on a countable group $G$ and let $ (X, \nu)$ be a $\mu$-boundary of $G$. Then the Poisson transform associated with $(X, \nu)$ is an isometry between $L^\infty(X, \nu)$ and a closed subspace of $H^\infty(G, \mu)$, and it is surjective if and only if $(X, \nu)$ coincides with the Poisson boundary $(\partial_\mu G, \nu)$.
\end{thm}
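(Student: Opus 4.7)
The plan is to establish the isometry statement first and then deduce the surjectivity characterization from the maximality of the Poisson boundary.

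For the isometry, the bound $\norm{f}_\infty \le \norm{F}_\infty$ is immediate since $g_\ast \nu$ is a probability measure, so $\abs{f(g)} \le \int_X \abs{F(gx)}\dd\nu(x) \le \norm{F}_\infty$ for every $g \in G$. The substantive step is the reverse inequality. I would argue that along $\P$-almost every sample path $\mathbf{w} = (w_n)_{n\ge 0}$, the sequence $(f(w_n))_{n\ge 0}$ is a bounded martingale with respect to the natural filtration $(\mathcal{F}_n)_{n\ge 0}$ on $(G^\N, \P)$. Using shift-invariance and $G$-equivariance of the boundary map $\xi \colon G^\N \to X$ (built into the notion of a $\mu$-boundary), together with the Markov property of the random walk, one verifies that the conditional distribution of $\xi$ given $\mathcal{F}_n$ equals $(w_n)_\ast \nu$, whence
\[
\E[F(\xi)\mid \mathcal{F}_n] \;=\; \int_X F(w_n x)\dd\nu(x) \;=\; f(w_n).
\]
Martingale convergence then yields $f(w_n) \to F(\xi(\mathbf{w}))$ $\P$-almost surely. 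Since $\abs{f(w_n)} \le \norm{f}_\infty$ uniformly in $n$, we obtain $\abs{F(\xi(\mathbf{w}))} \le \norm{f}_\infty$ almost surely, and because $\xi_\ast \P = \nu$ this upgrades to $\norm{F}_\infty \le \norm{f}_\infty$, giving equality. The image of the Poisson transform is therefore a closed subspace of $H^\infty(G,\mu)$, being the isometric image of a Banach space.

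For the ``if and only if'' characterization, the reverse implication is essentially the content of Definition \ref{defn: Poisson boundary original def} together with the universal property defining the Poisson boundary. For the forward direction, assume the Poisson transform of $(X,\nu)$ is surjective. By maximality there exists a $G$-equivariant measurable quotient $\pi\colon \partial_\mu G \to X$ with $\pi_\ast \nu_\partial = \nu$, where $\nu_\partial$ denotes the harmonic measure on $\partial_\mu G$. The Poisson transform of $(X,\nu)$ factors as $L^\infty(X,\nu) \xrightarrow{\pi^\ast} L^\infty(\partial_\mu G, \nu_\partial) \to H^\infty(G,\mu)$, where the last arrow is bijective. Surjectivity of the composition then forces $\pi^\ast$ to be onto; combined with the first part of the statement applied to both boundaries, $\pi^\ast$ becomes an isometric isomorphism of $L^\infty$ spaces, and standard Lebesgue-space arguments promote this to an isomorphism of probability $G$-spaces between $(X,\nu)$ and $(\partial_\mu G, \nu_\partial)$.

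The main obstacle, and the only non-formal step, is the Markovian identification $\E[F(\xi)\mid \mathcal{F}_n] = f(w_n)$, which is precisely where the structure of a $\mu$-boundary (shift-invariance and $G$-equivariance of $\xi$, combined with $\mu$-stationarity of $\nu$) genuinely enters. Everything else is a routine application of the martingale convergence theorem together with the universal property encoded in the definition of the Poisson boundary.
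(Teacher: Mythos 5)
The paper does not prove this theorem; it is quoted as a classical result with references to Furstenberg, Azencott, and Glasner's Theorem 4.4, so there is no in-paper argument to compare against. Your proposal is essentially the standard proof from those sources and is correct: the contraction bound is immediate, the reverse inequality follows from the martingale identity $\E[F(\xi)\mid\mathcal{F}_n]=f(w_n)$ (which does hold for a $\mu$-boundary, by exactly the combination of shift-invariance, $G$-equivariance of $\xi$, and the Markov property you indicate) together with martingale convergence and $\xi_\ast\P=\nu$, and the factorization $P_X=P_{\partial}\circ\pi^\ast$ correctly reduces the forward direction of the equivalence to the bijectivity of $\pi^\ast$ on $L^\infty$, which Rokhlin's theory upgrades to an isomorphism of Lebesgue $G$-spaces. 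The one step you treat as purely formal that is not quite: surjectivity of the Poisson transform for the Poisson boundary itself (the ``if'' direction) is not a direct consequence of maximality alone; one must run the martingale argument in reverse, namely for $f\in H^\infty(G,\mu)$ the bounded martingale $f(w_n)$ converges $\P$-a.s.\ to a limit that is invariant under the equivalence relation of Definition \ref{defn: Poisson boundary original def}, hence descends to a function $F$ on $\partial_\mu G$ with $P_\partial F=f$. With that half-page supplied, the argument is complete.
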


\subsection{Conditional probabilities with respect to a $\mu$-boundary}

Let $\mathbf{X}=(X,\nu)$ be a $\mu$-boundary of $G$. Using V. Rokhlin's theory of measurable partitions of Lebesgue spaces, the probability measure $\P$ can be disintegrated with respect to the boundary map $(G^{\mathbb{N}},\P) \to (X, \nu)$. That is, for $\nu$-almost every $\xi \in X$  there is a probability measure $\P^{\xi}$ supported on the fiber of $\xi$ in $G^\N$ such that $\P=\int_{X}\P^{\xi} \dd \nu (\xi)$; see \cite[Section I.7]{Rohlin1967} and \cite[Section 3]{Kaimanovich2000}. These conditional probability measures determine Markov chains on $G$ with transition probabilities
\begin{equation}\label{eq: conditional transition probs}
	\P^{\xi}\left[ w_{n+1}=g\mid w_n=h \right] =\P\left[ w_{n+1}=g\mid w_n=h\right] \frac{\dd g_{*}\nu}{\dd h_{*}\nu}(\xi)=\mu(h^{-1}g)\frac{\dd g_{*}\nu}{\dd h_{*}\nu}(\xi)
\end{equation}
for every $g,h\in G$ and for $\nu$-almost every $\xi\in X$; see Equation $(20)$ in page $462$ of \cite{KaimanovichVershik1983}, or \cite[Theorem 1.3.4]{Kaimanovich2001}.

\begin{lemma}\label{lemma:radonNikodym} Let $\mu$ be a probability measure on a countable group $G$ and consider a $\mu$-boundary $(X,\nu)$ of $G$. Let $J \subseteq X$ be a measurable subset and let $a \in G$ be such that for every $x\in X\backslash J$ we have $a(x)=x$. Then, for every $g\in G$ and $\nu$-almost every $\xi \in X\backslash g(J)$ we have
	\(
	\frac{\dd (ga)_{\ast}\nu}{\dd g_{\ast}\nu}(\xi)=1.
	\) In particular, we have $\P^\xi[w_{n+1} = ga \mid w_n = g] = \mu(a)$ for every $n\ge 1$.
\end{lemma}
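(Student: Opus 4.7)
The plan is to show first that the measures $(ga)_{\ast}\nu$ and $g_{\ast}\nu$ actually coincide on the measurable set $X\setminus g(J)$, from which the claim on the Radon--Nikodym derivative follows immediately, and then conclude the statement on transition probabilities via the general formula \eqref{eq: conditional transition probs}.

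The first observation is that since $a$ acts as a bijection on $X$ and restricts to the identity on $X\setminus J$, it must map $X\setminus J$ bijectively onto itself, and consequently $a(J)=J$. In particular, for any measurable $E\subseteq X\setminus J$ we have $a^{-1}E\subseteq X\setminus J$: indeed, any $x\in a^{-1}E$ with $x\in J$ would satisfy $a(x)\in a(J)=J$, contradicting $a(x)\in E$. Combined with the fact that $a$ is the identity on $X\setminus J$, this yields $a^{-1}E=E$, and therefore
\[
a_{\ast}\nu(E)=\nu(a^{-1}E)=\nu(E) \quad \text{for every measurable } E\subseteq X\setminus J.
\]

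Next I would translate this equality by $g$. Given a measurable $E\subseteq X\setminus g(J)$, one checks that $g^{-1}E\subseteq X\setminus J$, so the previous step applies and gives
\[
(ga)_{\ast}\nu(E)=a_{\ast}\nu(g^{-1}E)=\nu(g^{-1}E)=g_{\ast}\nu(E).
\]
Thus the two measures $(ga)_{\ast}\nu$ and $g_{\ast}\nu$ have identical restrictions to $X\setminus g(J)$, and the Radon--Nikodym derivative $\dd (ga)_{\ast}\nu/\dd g_{\ast}\nu$ must equal $1$ for $\nu$-almost every $\xi\in X\setminus g(J)$, which is the first assertion.

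Finally, to obtain the statement about conditional transition probabilities, I would plug this into Equation \eqref{eq: conditional transition probs}: for $\nu$-almost every $\xi\in X\setminus g(J)$,
\[
\P^{\xi}\bigl[w_{n+1}=ga \,\big|\, w_n=g\bigr]=\mu(g^{-1}\cdot ga)\,\frac{\dd (ga)_{\ast}\nu}{\dd g_{\ast}\nu}(\xi)=\mu(a).
\]
The argument is essentially bookkeeping about invariant sets of the action of $a$, and I do not expect any real obstacle; the only point requiring minor care is to verify rigorously that $a(J)=J$ (equivalently, that $a$ sends no point of $J$ outside $J$), which follows purely from the bijectivity of $a$ on $X$ together with the pointwise fixing hypothesis.
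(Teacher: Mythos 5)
Your proposal is correct and follows essentially the same route as the paper: for a measurable $A\subseteq X\setminus g(J)$ one checks $g^{-1}A\subseteq X\setminus J$ and hence $(ga)^{-1}A=a^{-1}g^{-1}A=g^{-1}A$, so the two push-forward measures agree there, and the conditional transition probability then follows from Equation \eqref{eq: conditional transition probs}. Your only addition is to spell out explicitly that $a(J)=J$ (so that $a^{-1}E=E$ for $E\subseteq X\setminus J$), a point the paper leaves implicit.
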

\begin{proof}
	Consider an arbitrary measurable subset $A\subseteq X \backslash g(J)$. Then $g^{-1}A\subseteq X\backslash J$, and hence $(ga)^{-1}A=a^{-1}g^{-1}A=g^{-1}A$. From this, we have
	\[ g_{\ast}\nu(A) =\nu(g^{-1}A) =\P\left[ \xi(\mathbf{w})\in g^{-1}A\right] = \P\left[ \xi(\mathbf{w})\in (ga)^{-1}A\right]  =(ga)_{\ast}\nu(A).\]
	Since $A$ was arbitrary, the above implies the first statement of the lemma. The second statement follows from Equation \eqref{eq: conditional transition probs}.
\end{proof}

\subsection{Entropy} \label{subsec:entropy methods}
We consider countable partitions of the space of sample paths $G^{\mathbb{N}}$ in the definition of entropy below. The most important partitions for this are the following.
\begin{definition}\label{def: partition rw at time n}
	For every $k \ge 1$, define the partition $\alpha_k$ of the space of sample paths $G^{\mathbb{N}}$ where two trajectories $\mathbf{w} = (w_n)_{n \geq 0}, \, \mathbf{w^\prime} = (w_n')_{n \geq 0} \in G^{\N}$ belong to the same element of $\alpha_k$ if and only if $w_k=w^\prime_k$.
\end{definition} 
In other words, $\alpha_k$ is the partition given by the element visited by the $\mu$-random walk at time $k$.

The \emph{Shannon entropy} of a countable partition $\rho$ of the space of sample paths $G^{\N}$ with respect to the probability measure $\P$ is defined as
\(
H(\rho)\coloneqq -\sum_{k\ge 1}\P(\rho_k)\log \P(\rho_k).\) Note that $ H(\alpha_1)=H(\mu)$ is the Shannon entropy of the probability measure $\mu$.

\begin{definition}\label{defn: average conditional entropy}
	Let $\rho=\{\rho_k\}_{k\ge 1}$ be a countable partition of the space of sample paths $G^{\mathbb{N}}$ and let $\mathbf{X}=(X,\nu)$ be a $\mu$-boundary of $G$. Consider the disintegration $\P=\int_X \P^{\xi} \dd \nu(\xi)$ with respect to the boundary map $G^{\N}\to X$. We define for $\nu$-almost every $\xi\in X$ the \emph{conditional entropy of $\rho$ given $\xi$} as \(H_{\xi}(\rho)\coloneqq-\sum_{k\ge 1}\P^{\xi}(\rho_k)\log \P^{\xi}(\rho_k).\)
	Following \cite[Section 5.1]{Rohlin1967}, let us define the \emph{mean conditional entropy of $\rho$ over the $\mu$-boundary $\mathbf{X}$} as
	\(	H_{\mathbf{X}}(\rho)\coloneqq \int_{X}H_{\xi}(\rho) \dd \nu(\xi).\)
\end{definition}

We now formulate Kaimanovich's conditional entropy criterion \cite[Theorem 2]{Kaimanovich1985} in terms of the mean conditional entropy, which is our main tool in the proof of Theorem \ref{thm:entropy}.
\begin{thm}\label{thm: conditional entropy alternative formulation}
	Let $G$ be a countable group, and let $\mu$ be a probability measure on $G$ with $H(\mu)<\infty$. Consider a $\mu$-boundary $\mathbf{X}$ of $G$. Then $\mathbf{X}$ is the Poisson boundary of $(G,\mu)$ if and only if
	\begin{equation*}
		h(\mu\mid \mathbf{X})\coloneqq\lim_{n\to \infty}\frac{H_{\mathbf{X}}(\alpha_n)}{n}=0.
	\end{equation*}	
\end{thm}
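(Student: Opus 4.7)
The plan is to reduce the statement to the classical Kaimanovich--Vershik--Avez entropy criterion by applying it to the conditioned Markov chains $\P^\xi$ on $G$. The key structural observation, coming from Equation~\eqref{eq: conditional transition probs} together with the chain rule $(gh)_\ast\nu = g_\ast(h_\ast\nu)$, is that under $\P^\xi$ the process $(w_n)_{n\ge 0}$ is a time-homogeneous Markov chain with transition probability $\mu(h)\,(\dd h_\ast\nu/\dd\nu)(g^{-1}\xi)$ from $g$ to $gh$. Consequently, given $w_n=g$, the shifted process $(w_{n+k})_{k\ge 0}$ is distributed as $g$ times the chain under $\P^{g^{-1}\xi}$.

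First I would prove existence of the limit by subadditivity. The Markov structure above gives
\[
H_\xi(\alpha_{n+m}) \;\le\; H_\xi(\alpha_n) + \sum_{g\in G}\P^\xi[w_n=g]\,H_{g^{-1}\xi}(\alpha_m).
\]
Integrating in $\xi$ and using that the joint law of $(w_n,\xi)$ under $\P$ disintegrates as $\mu^{\ast n}(g)\,\dd g_\ast\nu(\xi)$, followed by the change of variables $\eta = g^{-1}\xi$, collapses the right-hand side to $H_{\mathbf X}(\alpha_n)+H_{\mathbf X}(\alpha_m)$. Fekete's lemma then yields $h(\mu\mid\mathbf X) = \inf_n H_{\mathbf X}(\alpha_n)/n$, finiteness being automatic since $H_{\mathbf X}(\alpha_n)\le H(\alpha_n)\le n H(\mu)<\infty$.

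For the forward direction, assume $(X,\nu)$ is the Poisson boundary. Then the boundary $\sigma$-algebra on $G^\N$ coincides modulo $\P$-null sets with the tail $\sigma$-algebra of the random walk, so for $\nu$-a.e.\ $\xi$ the conditioned Markov chain $\P^\xi$ has trivial tail. Applying the unconditional entropy criterion to this chain forces its asymptotic entropy $\lim_n H_\xi(\alpha_n)/n$ to vanish $\nu$-a.s.; dominated convergence (with the uniform bound $H_\xi(\alpha_n)\le nH(\mu)$) then yields $h(\mu\mid\mathbf X)=0$.

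The reverse direction is the main obstacle. Assume $h(\mu\mid\mathbf X)=0$; by Theorem~\ref{thm: isometry} it suffices to show that every $f\in H^\infty(G,\mu)$ arises as the Poisson transform of some $F\in L^\infty(X,\nu)$. The bounded martingale $(f(w_n))_{n\ge 0}$ converges $\P$-a.s.\ to a limit $\widehat f$, and by disintegration it converges $\P^\xi$-a.s.\ for $\nu$-a.e.\ $\xi$. I would show that $\widehat f$ is $\P^\xi$-a.s.\ constant for $\nu$-a.e.\ $\xi$, which is equivalent to $\P^\xi$ having trivial tail $\sigma$-algebra. This is precisely where the hypothesis $h(\mu\mid\mathbf X)=0$ enters: a Shannon--McMillan type estimate applied to the Markov chain $\P^\xi$, using the vanishing of its asymptotic entropy, produces an exponential sharpening of the conditional distributions of $w_n$ that rules out nontrivial tail events. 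Setting $F(\xi)\coloneqq \widehat f$ then provides the required boundary function and finishes the proof.
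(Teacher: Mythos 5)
The paper does not prove this statement at all: it is Kaimanovich's conditional entropy criterion, quoted with references to \cite[Theorem 2]{Kaimanovich1985} and \cite[Theorem 4.6]{Kaimanovich2000}, together with a pointer to the discussion following Theorem 2.4 of \cite{FrischSilva2023} for the equivalence with the formulation in terms of $H_{\mathbf X}(\alpha_n)$. So you are attempting to reprove a cited theorem from scratch. Your subadditivity step is correct and complete: the identity $p^\xi(g,gh)=p^{g^{-1}\xi}(e_G,h)$ and the disintegration $\P^\xi[w_n=g]\,\dd\nu(\xi)=\mu^{\ast n}(g)\,\dd g_\ast\nu(\xi)$ do give $H_{\mathbf X}(\alpha_{n+m})\le H_{\mathbf X}(\alpha_n)+H_{\mathbf X}(\alpha_m)$, so the limit exists. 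The two directions of the equivalence, however, contain genuine gaps.

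For the forward direction you invoke ``the unconditional entropy criterion'' for the conditioned chain $\P^\xi$. That criterion is a statement about space-homogeneous random walks on groups; the Doob-conditioned chain has transition probabilities $\mu(h)\,\frac{\dd (gh)_\ast\nu}{\dd g_\ast\nu}(\xi)$, which are not group-invariant, and for a general Markov chain triviality of the tail does not imply sublinear growth of the entropies $H_\xi(\alpha_n)$. The correct argument identifies $H(\alpha_n)-H_{\mathbf X}(\alpha_n)$ with the mutual information between $w_n$ and the boundary $\sigma$-algebra and shows, via the Kaimanovich--Vershik computation, that for the full Poisson boundary this quantity equals $nh(\mu)+o(n)$; that computation is exactly what your sketch omits. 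For the reverse direction, which you rightly call the main obstacle, you do not actually give an argument: the claim that ``a Shannon--McMillan type estimate \ldots rules out nontrivial tail events'' describes the desired conclusion rather than deriving it. The implication that vanishing mean conditional entropy forces the conditioned chains to have trivial tail (equivalently, that every bounded $\mu$-harmonic function factors through $\mathbf X$) is the entire content of Kaimanovich's theorem and requires the conditional version of the zero-entropy argument. As written, the proposal is an outline of the known proof with its two essential analytic steps missing; if your goal is only to use the statement, the honest route is the one the paper takes, namely citing it.
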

For the equivalence between Theorem \ref{thm: conditional entropy alternative formulation} and the original formulations of the conditional entropy criterion in \cite[Theorem 2]{Kaimanovich1985} and \cite[Theorem 4.6]{Kaimanovich2000} we refer to the explanation following Theorem 2.4 in \cite{FrischSilva2023}.

In the proof of Theorem \ref{thm:entropy} it will be convenient to modify the step distribution $\mu$ in the following two ways.

\begin{lemma}\label{lemma: lazy boundaries}
	Let $\mu$ be a nondegenerate probability measure on a countable group $G$. 
	Consider a probability measure $\widetilde{\mu}$ equal either to
	\begin{itemize}
		\item $\mu_{\mathrm{lazy}}\coloneqq \frac{1}{2}\mu+\frac{1}{2}\delta_{e_G}$, or to
		\item a convolution  $\mu^{\ast s}$ for some $s \in \N_+$.
	\end{itemize}
	Then $H(\mu)<\infty$ if and only if $H(\widetilde{\mu})<\infty$, and the Poisson boundary of $(G, \mu)$ is $G$-equivariantly measurably isomorphic to the Poisson boundary of $(G, \widetilde{\mu})$.
\end{lemma}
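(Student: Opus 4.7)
The plan is to treat the two cases $\widetilde{\mu}=\mu_{\mathrm{lazy}}$ and $\widetilde{\mu}=\mu^{*s}$ separately. In each case the entropy statement reduces to a direct calculation, while the Poisson boundary isomorphism is obtained from a characterization via bounded harmonic functions in the lazy case and from the classical invariance of tail $\sigma$-algebras under deterministic subsampling in the convolution case.

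For the entropy: in the lazy case, the formulas $\mu_{\mathrm{lazy}}(e_G)=\tfrac{1}{2}(1+\mu(e_G))$ and $\mu_{\mathrm{lazy}}(g)=\tfrac{1}{2}\mu(g)$ for $g\neq e_G$ give
\[
H(\mu_{\mathrm{lazy}}) = \tfrac{1}{2}H(\mu) + C
\]
for an explicit finite constant $C$ depending only on $\mu(e_G)$. In the convolution case, the subadditivity of Shannon entropy under independent sums yields $H(\mu^{*s})\leq sH(\mu)$, while the chain rule applied to i.i.d.\ $\mu$-increments $g_1,\ldots,g_s$ with product $w_s$ gives
\[
sH(\mu) = H(g_1,\ldots,g_s) = H(w_s) + H(g_1,\ldots,g_s\mid w_s) \leq H(\mu^{*s}) + (s-1)H(\mu),
\]
where the final estimate uses that, given $w_s$, the last increment $g_s$ is a deterministic function of the previous $s-1$. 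This yields $H(\mu)\leq H(\mu^{*s})$ and hence the claimed equivalence.

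For the Poisson boundary in the lazy case, the key observation is that a bounded function $f\colon G\to \R$ is $\mu$-harmonic if and only if it is $\mu_{\mathrm{lazy}}$-harmonic, since
\[
f(g)-\sum_{h\in G}f(gh)\mu_{\mathrm{lazy}}(h) = \tfrac{1}{2}\Bigl(f(g)-\sum_{h\in G}f(gh)\mu(h)\Bigr),
\]
and a probability measure $\nu$ on a $G$-space is $\mu$-stationary if and only if it is $\mu_{\mathrm{lazy}}$-stationary. Since the Poisson transform \eqref{eq: Poisson transform} depends only on the $G$-action and the stationary measure, Theorem \ref{thm: isometry} characterizes the Poisson boundary of $(G,\mu)$ as the unique $\mu$-stationary $G$-space whose Poisson transform surjects onto $H^\infty(G,\mu) = H^\infty(G,\mu_{\mathrm{lazy}})$; the same characterization identifies the Poisson boundary of $(G,\mu_{\mathrm{lazy}})$, yielding the desired $G$-equivariant measurable isomorphism.

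For the convolution case, I would invoke the classical fact that the $\mu^{*s}$-random walk is distributed as the subsampled sequence $(w_{sk})_{k\geq 0}$ of the $\mu$-random walk, and that this deterministic subsampling of a nondegenerate random walk preserves the shift-invariant $\sigma$-algebra on trajectories up to $\P$-null sets, yielding the desired $G$-equivariant measurable isomorphism of the associated Poisson boundaries as defined in Definition \ref{defn: Poisson boundary original def}; see for instance \cite{KaimanovichVershik1983, Kaimanovich2000}. The main subtlety, and the place I expect the real work to concentrate, is precisely this subsampling invariance: in contrast to the lazy case, one has only the inclusion $H^\infty(G,\mu)\subseteq H^\infty(G,\mu^{*s})$ a priori, and the reverse inclusion is essentially equivalent to the Poisson boundary identification itself, so the characterization via harmonic functions cannot be used as a shortcut and the argument must proceed directly at the level of trajectories.
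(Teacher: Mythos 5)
Your conclusions are correct, but your route differs from the paper's, which handles both cases in one stroke: it observes that $\mu_{\mathrm{lazy}}$ and $\mu^{\ast s}$ are each obtained from $\mu$ by stopping the $\mu$-walk along a (randomized) stopping time, and then invokes a single boundary-preservation result for stopped random walks \cite[Theorem 3.6.1]{ForghaniPhdThesis2015}; the entropy claim is left as a direct computation. In the convolution case you are therefore doing essentially what the paper does --- deferring the subsampling invariance to the literature --- though the paper's citation is the targeted one, and your observation that this is where the real content sits (and that the harmonic-function shortcut is unavailable there, since a priori one only gets $H^\infty(G,\mu)\subseteq H^\infty(G,\mu^{\ast s})$) is exactly right. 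In the lazy case your argument via the equality $H^\infty(G,\mu)=H^\infty(G,\mu_{\mathrm{lazy}})$ is a genuinely different and more self-contained approach, and the identity you display is correct.

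There are, however, two gaps. First, Theorem \ref{thm: isometry} applies to $\mu$-boundaries, and a $\mu$-boundary is more than a $G$-space carrying a stationary measure: by definition it comes equipped with a shift-invariant boundary map from the trajectory space $(G^{\N},\P)$. Knowing that $\nu$ is $\mu_{\mathrm{lazy}}$-stationary does not by itself exhibit $(\partial_\mu G,\nu)$ as a $\mu_{\mathrm{lazy}}$-boundary, so you cannot yet apply the surjectivity criterion to the lazy walk; you still have to produce the boundary map on the lazy trajectory space, e.g.\ by realizing the lazy walk as a random time change of the $\mu$-walk --- which is essentially the stopping-time argument the paper uses, so the harmonic-function route does not fully bypass it. Once that is done, your argument closes correctly. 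Second, in the convolution entropy bound you cancel $(s-1)H(\mu)$ from both sides of
\[
sH(\mu)\leq H(\mu^{\ast s})+(s-1)H(\mu),
\]
which is illegitimate precisely when $H(\mu)=\infty$, i.e.\ in the direction of the equivalence where you need it. Replace it with the direct estimate $H(\mu^{\ast s})=H(w_s)\geq H(w_s\mid g_2,\ldots,g_s)=H(g_1)=H(\mu)$, valid in the extended sense since conditioning never increases entropy and, given $g_2,\ldots,g_s$, the variables $w_s$ and $g_1$ determine one another.
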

\begin{proof}
	The fact that $H(\mu)<\infty$ if and only if $H(\widetilde{\mu})<\infty$ follows from a direct computation. To see that the Poisson boundaries do not change, note that $\mu_\mathrm{lazy}$ (resp. $\mu^{\ast s}$) can be obtained from $\mu$ by stopping the random walk driven by $\mu$ along the stopping time $\tau = \inf\{k \geq 1 : g_k \neq 0\}$ (resp. $\tau  = s$). The equality of the Poisson boundaries then follows from \cite[Theorem 3.6.1]{ForghaniPhdThesis2015}.
	\end{proof}
	
	\subsection{Groups acting on the circle} \label{subsec:groupsCircle}
	
	We refer to the monographs \cite{Ghys2001, Navas2011} for general references on groups that act on the circle. For an overview on group actions on $1$-manifolds and related topics, we refer to the surveys \cite{Navas2018, Mann2022}.
	
	A continuous surjection $\pi \colon S^1 \to S^1$ is said to have \emph{degree $d \in \N_+$} if any lift $\widetilde{\pi} \colon \R \to \R$ of $\pi$ satisfies $\widetilde{\pi}(x + d) = \widetilde{\pi}(x) + d$ for all $x \in \R$. We say that an action $G \curvearrowright^\phi S^1$ is \emph{semiconjugate} to $G \curvearrowright^\psi S^1$ if there exists a continuous surjection $\pi \colon S^1 \to S^1$ with $\pi \circ \phi(g) = \psi(g) \circ \pi$ for all $g \in G$, and such that $\pi$ is locally nondecreasing and has degree one.
	
	Recall that we call an action $G \curvearrowright S^1$ \emph{minimal} if all orbits are dense in $S^1$. A folklore theorem describing the topological dynamics of an arbitrary group action asserts that unless $G \curvearrowright S^1$ has a finite orbit, for most purposes it suffices to consider minimal actions. For a proof, see \cite[Proposition 5.6]{Ghys2001}.
	
	\begin{thm}
		Consider a group action $G \curvearrowright^\phi S^1$ by orientation-preserving homeomorphisms.
		Then exactly one of the following statements is satisfied.
		\begin{enumerate}
			\item \label{item:class1} There exists a finite orbit.
			\item \label{item:class2} There exists a unique closed minimal set $\Lambda$, which is either $S^1$ or a Cantor set. In the latter case, by collapsing the connected componenents of $S^1 - \Lambda$ we can semiconjugate $\phi$ to a minimal group action $G \curvearrowright S^1$.
		\end{enumerate}
	\end{thm}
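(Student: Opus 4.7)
The plan is first to produce a nonempty closed minimal $G$-invariant subset $\Lambda \subseteq S^1$ by Zorn's lemma applied to the family of nonempty closed $G$-invariant subsets ordered by reverse inclusion; compactness of $S^1$ supplies upper bounds for chains. If $\Lambda$ is finite, minimality forces it to be a single $G$-orbit, placing us in case \cref{item:class1}. Otherwise $\Lambda$ is infinite, and since the set of non-isolated points of $\Lambda$ is closed, $G$-invariant, and nonempty, minimality gives that $\Lambda$ is perfect. A second dichotomy then separates $\Lambda = S^1$ from the Cantor case: if $\Lambda$ has nonempty interior, its boundary $\partial \Lambda$ is a proper closed $G$-invariant subset of $\Lambda$, hence empty by minimality, which combined with connectedness of $S^1$ gives $\Lambda = S^1$; otherwise $\Lambda$ is a compact, perfect, totally disconnected, metrizable subset of $S^1$, hence a Cantor set.

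For the uniqueness claim in case \cref{item:class2}, I would prove the stronger statement that $\overline{Gx} \supseteq \Lambda$ for every $x \in S^1$. This is immediate from minimality when $x \in \Lambda$. For $x \notin \Lambda$ (so $\Lambda \neq S^1$ is a Cantor set), $x$ lies in a unique connected component $J = (a,b)$ of $S^1 \setminus \Lambda$ with endpoints $a, b \in \Lambda$. The $G$-orbit of $J$ in the countable family of gaps must be infinite: otherwise the setwise stabilizer of $J$ would have finite index in $G$, and since the action is by orientation-preserving homeomorphisms each element of this stabilizer fixes both endpoints of $J$ pointwise, forcing $Ga$ to be finite and contradicting density of $Ga$ in the infinite set $\Lambda$. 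Since the pairwise disjoint gaps have total length at most the circumference of $S^1$, there exist $g_n \in G$ with the arcs $g_n(J)$ all distinct and $|g_n(J)| \to 0$. Passing to a subsequence along which $g_n(x) \to z \in S^1$, one has $|g_n(a) - g_n(x)| \leq |g_n(J)| \to 0$, so $g_n(a) \to z$ as well, and $z \in \Lambda$ since $\Lambda$ is closed. This produces a point of $\overline{Gx} \cap \Lambda$, and minimality of $\Lambda$ then gives $\overline{Gx} \supseteq \Lambda$. Uniqueness of the minimal set is immediate, since for any other minimal set $\Lambda'$ and any $x \in \Lambda'$ one has $\Lambda' = \overline{Gx} \supseteq \Lambda$, whence $\Lambda' = \Lambda$.

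Finally, to build the semiconjugacy in the Cantor case, I would collapse each connected component of $S^1 \setminus \Lambda$ to a point, producing a continuous degree-one surjection $\pi \colon S^1 \to S^1$ that is constant on the closure of every gap, nondecreasing in any lift to $\R$, and injective off the set of gap endpoints. The collapsing equivalence relation is $G$-invariant because $G$ permutes the gaps, so the action descends to a $G$-action $\psi$ on the quotient circle with $\pi \circ \phi(g) = \psi(g) \circ \pi$ for every $g \in G$; minimality of $\psi$ then follows from minimality of $\phi$ on $\Lambda$ together with $\pi(\Lambda) = S^1$. I expect the main obstacle to be the uniqueness step, which is the only genuinely quantitative part of the argument: it relies on the bound on the total length of gaps in $S^1$ to produce a sequence of gaps in the orbit of $J$ whose lengths tend to zero, and the stabilizer subcase requires orientation-preservation to be used essentially to rule out finite orbits of gaps.
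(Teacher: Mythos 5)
The paper does not prove this theorem: it is quoted as a folklore result with a pointer to \cite[Proposition 5.6]{Ghys2001}, so there is no in-paper argument to compare against. Your proof is correct and is the standard one: Zorn's lemma for existence of a minimal set, perfectness via the invariance of the set of non-isolated points, the interior/boundary dichotomy for $S^1$ versus Cantor, and the gap-shrinking argument for $\overline{Gx}\supseteq\Lambda$. The uniqueness step is the one place where care is needed and you handle it properly: the setwise stabilizer of a gap fixes its endpoints because the homeomorphisms preserve orientation, so a finite gap-orbit would force a finite orbit inside the infinite minimal set $\Lambda$, and the total-length bound on the pairwise disjoint gaps then yields gaps of arbitrarily small diameter in the orbit of $J$. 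Note that this stronger statement ($\overline{Gx}\supseteq\Lambda$ for \emph{every} $x$) also delivers the "exactly one" part of the dichotomy for free, since a finite orbit would have to contain the infinite set $\Lambda$. The only step left at the level of assertion is that the monotone quotient collapsing the gap closures is again a circle, but that is a standard fact and acceptable here.
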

	
	Consider a group action $G \curvearrowright^\phi S^1$ by orientation-preserving homeomorphisms. We say that $\phi$ is \emph{locally proximal} if there exists $r > 0$ such that for every interval $I \subset S^1$ with $\mathrm{diam}(I) < r$ and every $\epsilon > 0$ there is $g \in G$ with $\mathrm{diam}(g(I)) < \epsilon$. We say that $\phi$ is \emph{proximal} if the previous is true for all closed intervals $I$ strictly contained in $S^1$.
	
	A complimentary description of the dynamics of a minimal group action on $S^1$ is the following theorem by G. Margulis \cite{Margulis2000}, which also follows from results of V. Antonov \cite{Antonov1984}. We refer to \cite[Section 5.2]{Ghys2001} for a proof and further discussion.
	\begin{thm}
		\label{teo:classification}
		Consider a minimal group action $G \curvearrowright^\phi S^1$ by orientation-preserving homeomorphisms. Then exactly one of the following statements is satisfied.
		\begin{enumerate}
			\item \label{item:class21}the action is conjugated to a minimal action by rotations, or 
			\item \label{item:class22} the action is proximal, or
			\item \label{item:class23} the action is locally proximal and not proximal, and there exists $d \in \N,\, d \geq 2$ and a continuous $d$-to-one covering $\pi \colon S^1 \to S^1$ that intertwines $\phi$ with a proximal action.
		\end{enumerate}
	\end{thm}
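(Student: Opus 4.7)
The plan is to analyze the centralizer $Z\le\mathrm{Homeo}_+(S^1)$ of $\phi(G)$ and to show that its closure $\overline{Z}$ in the uniform topology determines which of the three alternatives occurs. First I would observe that for every $z\in Z$ the set $\mathrm{Fix}(z)$ is closed and $\phi(G)$-invariant, so by minimality it is either empty or all of $S^1$. Hence $Z$ acts freely on $S^1$, and the same argument applies to $\overline{Z}$ since the commutation relations pass to uniform limits.

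A classical consequence of H\"older's theorem is that any closed subgroup of $\mathrm{Homeo}_+(S^1)$ acting freely on $S^1$ is conjugate inside $\mathrm{Homeo}_+(S^1)$ to either a finite cyclic subgroup of $\mathrm{SO}(2)$ of some order $d\ge 1$, or to the full rotation group $\mathrm{SO}(2)$. In the latter case, after conjugation each $\phi(g)$ commutes with every rotation, which forces $\phi(G)\le \mathrm{SO}(2)$; minimality then yields density in $\mathrm{SO}(2)$, giving~(\ref{item:class21}). In the former case, the orbit projection $\pi\colon S^1\to S^1/\overline{Z}$ is a $d$-to-$1$ topological covering of a circle, and the $G$-action descends to a minimal action $\overline{\phi}\colon G\curvearrowright S^1/\overline{Z}$ whose centralizer in $\mathrm{Homeo}_+(S^1/\overline{Z})$ is trivial: any new centralizing element would lift to an element of $\mathrm{Homeo}_+(S^1)$ centralizing $\phi(G)$ and lying outside $\overline{Z}$, contradicting maximality.

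The main obstacle is to prove that a minimal action on $S^1$ with trivial centralizer is proximal. I would argue by contrapositive: suppose $\overline{\phi}$ is not proximal, so there exist an interval $I\subsetneq S^1$ and $\delta>0$ with $\mathrm{diam}(g(I))\ge \delta$ for every $g\in G$. An Arzel\`a--Ascoli extraction from the family $\{\phi(g)\}$ then produces, after passing to subsequences and suitable reparametrizations, a nontrivial equicontinuous factor of the action. Any minimal equicontinuous action on a circle is conjugate to an action by rotations, and the associated limit homeomorphism commutes with $\overline{\phi}(G)$, providing a nontrivial centralizing element and contradicting triviality. Hence $\overline{\phi}$ is proximal, which yields~(\ref{item:class22}) when $d=1$ and~(\ref{item:class23}) when $d\ge 2$; in the latter case local proximality of $\phi$ follows by lifting contractions through $\pi$ on arcs of diameter smaller than the minimal distance between fibers of $\pi$.

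Finally, the three alternatives are mutually exclusive: an action by rotations is isometric and cannot be even locally proximal, while a proximal action on $S^1$ cannot factor through a nontrivial finite cover, since contractions onto arbitrarily small arcs would have to respect the deck transformation group and thus force the covering degree to be $1$.
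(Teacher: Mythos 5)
The paper does not actually prove this theorem: it cites Margulis and Antonov and points to Ghys's survey for the argument, so I am comparing your proposal with the standard proof. Your architecture (the centralizer acts freely by minimality, classify its closure as finite cyclic or all of $\mathrm{SO}(2)$, quotient, and reduce to ``minimal with trivial centralizer implies proximal'') is the correct skeleton. The gap is that this last reduction is where the entire content of the theorem sits, and your argument for it does not work. From an interval $I$ with $\mathrm{diam}(g(I))\ge\delta$ for all $g$ you cannot extract equicontinuity of $\{\phi(g)\}$ or of any reparametrized subfamily, and you cannot produce a nontrivial equicontinuous factor: in the genuine case (\ref{item:class23}) examples (the degree-$d$ lift of a proximal action, e.g.\ lifting $\mathrm{PSL}_2(\R)\curvearrowright \R P^1$ to a $d$-fold cover) the action is locally proximal, so arbitrarily small arcs are contracted, the family is maximally non-equicontinuous, and the regionally proximal relation is all of $S^1\times S^1$, so the maximal equicontinuous factor is a point. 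Hence ``not proximal $\Rightarrow$ Arzel\`a--Ascoli $\Rightarrow$ equicontinuous factor $\Rightarrow$ rotation in the centralizer'' fails exactly in the case the theorem is about. The standard construction is order-theoretic rather than compactness-based: one first shows a minimal non-equicontinuous action contracts \emph{some} interval, then defines for each $x$ the maximal open arc with left endpoint $x$ that can be contracted and proves that the map $\theta$ sending $x$ to the right endpoint of that arc is a finite-order orientation-preserving homeomorphism commuting with $\phi(G)$; the quotient by $\langle\theta\rangle$ is then proximal. Some such explicit construction of the centralizing element from the failure of proximality (Ghys's, Antonov's, or the stationary-measure proof) is what your sketch is missing.

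A secondary inaccuracy: ``any closed subgroup of $\mathrm{Homeo}_+(S^1)$ acting freely is conjugate into $\mathrm{SO}(2)$'' is not a consequence of H\"older's theorem in general --- the closed cyclic group generated by a Denjoy counterexample acts freely and is not conjugate to rotations. The claim is true for the centralizer of a \emph{minimal} action, but that requires the additional observation that the minimal set of each centralizing element is $\phi(G)$-invariant and hence equals $S^1$, forcing each such element to be conjugate to a rotation.
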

	
	As a consequence we have a dichotomy for group actions $G \curvearrowright S^1$: either
	\begin{itemize}
		\item the action preserves a probability measure on $S^1$, and this happens exactly when there is a finite $G$-orbit in $S^1$ or when $G \curvearrowright S^1$ is semiconjugate to a minimal action by rotations, or
		\item there exists $d \in \N_+$ and a continuous surjection $\pi \colon S^1 \to S^1$ that intertwines $G \curvearrowright S^1$ with a minimal and proximal action, such that the fibers $\pi^{-1}(x)$ are of size $d$ for Lebesgue-almost every $x \in S^1$.
	\end{itemize}
	
	Denote by $\mathrm{Leb}$ the Lebesgue measure on $S^1 \cong \R/\Z$. The next theorem, proved in \cite[Appendice]{DeroinKleptsynNavas2007}, shows how $\mu$-boundaries arise in $S^1$ for random walks on groups acting proximally. Its proof may be found in \cite[Section 2.3.2]{Navas2011}.
	
	\begin{thm}
		\label{teo:dkn}
		Consider a group action $G \curvearrowright S^1$ by orientation-preserving homeomorphisms with no invariant probability measure on $S^1$, and let $\mu$ be a nondegenerate probability measure on $G$.
		
		\begin{enumerate}
			\item\label{it:dkn1} There exists a unique $\mu$-stationary probability measure $\nu$ on $S^1$, which is atomless and is supported on the minimal set of $G$.
			\item\label{it:dkn2} If the action of $G$ on $S^1$ is proximal, there exists a random variable $\mathbf{w} \in G^\N \mapsto \xi(\mathbf{w}) \in S^1$ such that for $\P$-almost every $\mathbf{w} = (w_n)_{n \geq 0}$ we have
			\[(w_n)_\ast\mathrm{Leb} \xrightarrow[n\to \infty]{} \delta_{\xi(\mathbf{w})} \] in the weak-$\ast$ topology.
		\end{enumerate}
		In particular, $\lim_{n \to \infty} (w_n)_\ast \nu = \delta_{\xi(\mathbf{w})}$ holds $\P$-almost surely, and hence the circle $(S^1,\nu)$ is a $\mu$-boundary of $G$.
	\end{thm}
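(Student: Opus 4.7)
My plan is to handle \pcref{it:dkn1} and \pcref{it:dkn2} in parallel, using the conclusion of \pcref{it:dkn2} (the convergence to a Dirac mass) to derive uniqueness in \pcref{it:dkn1}. The existence of a $\mu$-stationary probability measure $\nu$ follows from a standard Bogolyubov averaging argument: extract a weak-$\ast$ limit of the Cesàro means $\tfrac{1}{n}\sum_{k=0}^{n-1} \mu^{\ast k} \ast \nu_0$ for any $\nu_0 \in \mathcal{P}(S^1)$, using compactness of $\mathcal{P}(S^1)$ in the weak-$\ast$ topology. To locate $\mathrm{supp}(\nu)$ inside the minimal set $\Lambda$, I would observe via nondegeneracy of $\mu$ that $\mathrm{supp}(\nu)$ is closed and $G$-invariant, hence contains $\Lambda$. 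When $\Lambda \subsetneqq S^1$ is a Cantor set, a direct argument rules out positive $\nu$-mass on a gap of $\Lambda$: the $G$-orbit of such a gap is countable, and averaging the normalized restrictions of $\nu$ to the orbit would produce a $G$-invariant probability measure, contradicting the hypothesis.

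For atomlessness of $\nu$, set $M = \sup_{x \in S^1} \nu(\{x\})$ and assume $M > 0$. Then $A = \{x : \nu(\{x\}) = M\}$ is finite since $\nu$ has total mass one. The stationarity identity $\nu(\{x\}) = \sum_{g \in G} \mu(g)\, \nu(\{g^{-1}x\})$ forces $\nu(\{g^{-1}x\}) = M$ for every $g \in \mathrm{supp}(\mu)$ whenever $x \in A$, and nondegeneracy of $\mu$ propagates this equality to all of $G$. Thus $G \cdot x \subseteq A$ is a finite $G$-orbit, and the uniform measure on this orbit is $G$-invariant, contradicting the standing assumption.

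For \pcref{it:dkn2}, I would first verify that $\nu_n \coloneqq (w_n)_\ast \nu$ defines a $\mathcal{P}(S^1)$-valued martingale with respect to the natural filtration on $G^\N$, since $\E[\nu_{n+1} \mid w_1, \ldots, w_n] = (w_n)_\ast(\mu \ast \nu) = \nu_n$. Applying the martingale convergence theorem to the bounded real-valued martingale $\int f \, d\nu_n$ for $f$ ranging over a countable dense subset of $C(S^1)$ yields, $\P$-almost surely, a weak-$\ast$ limit measure $\nu_\infty$. The core of the argument is then to use proximality to show that $\nu_\infty$ is $\P$-almost surely a Dirac mass $\delta_{\xi(\mathbf{w})}$: combining the contractions supplied by proximality with the nondegeneracy of $\mu$, one exhibits $\P$-almost surely a sequence of times $n_k$ at which $w_{n_k}$ contracts any given proper closed interval $J \subsetneqq S^1$ to arbitrarily small diameter, which forces $\mathrm{supp}(\nu_\infty)$ to be a singleton. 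Uniqueness in \pcref{it:dkn1} then follows, because any $\mu$-stationary measure must equal $\E[\nu_\infty]$, the law of the random variable $\xi(\mathbf{w})$. Upgrading the convergence from $\nu$ to $\mathrm{Leb}$ is a purely topological statement: the proximal contractions show that for any $\varepsilon > 0$, almost surely $w_n$ eventually sends the complement of some small interval into a ball of radius $\varepsilon$ around $\xi(\mathbf{w})$, and this conclusion applies uniformly to any atomless probability measure, including $\mathrm{Leb}$.

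The main obstacle is the passage from the existence of the martingale limit $\nu_\infty$ to identifying it as a Dirac mass. Martingale convergence alone yields no information on the support of $\nu_\infty$; it is only through the proximal structure of the action---together with nondegeneracy of $\mu$, which ensures that contracting elements of $G$ are sampled infinitely often along typical trajectories---that one can conclude $\mathrm{supp}(\nu_\infty) = \{\xi(\mathbf{w})\}$ and thereby close the loop of the proof.
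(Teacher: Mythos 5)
This theorem is not proved in the paper at all: it is quoted from \cite[Appendice]{DeroinKleptsynNavas2007}, with a proof referenced in \cite[Section 2.3.2]{Navas2011}. So there is no internal proof to compare against; your proposal has to be judged against the standard argument from those sources, whose skeleton you have correctly identified (Krylov--Bogolyubov existence, the maximal-atom argument for atomlessness, the measure-valued martingale $\nu_n = (w_n)_\ast\nu$, and uniqueness via $\nu = \E[\nu_\infty]$ once the limit is identified).

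The genuine gap is exactly the step you flag as "the main obstacle" and then do not resolve: showing that $\nu_\infty$ is almost surely a Dirac mass. The mechanism you propose --- proximality supplies contracting elements, nondegeneracy ensures they are sampled, hence there are times $n_k$ at which $w_{n_k}$ contracts any given proper closed interval $J$ --- does not work as stated. Proximality gives some $h \in G$ with $\mathrm{diam}(h(J)) < \epsilon$, and nondegeneracy places $h$ in $\mathrm{supp}(\mu^{\ast m})$ for some $m$; but if the increments between times $n$ and $n+m$ multiply to $h$, then $w_{n+m}(J) = w_n(h(J))$, and the homeomorphism $w_n$ may expand the tiny interval $h(J)$ arbitrarily. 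Recurrence of contracting elements along the trajectory therefore yields no control on $\mathrm{diam}(w_n(J))$. The actual proofs close this gap by a finer argument: Furstenberg-style, one shows that for each $k$ and $\mu^{\ast k}$-almost every $h$ the perturbed sequence $(w_n h)_\ast \nu$ has the same almost-sure limit $\nu_\infty$ (using $\E[d_{w^\ast}(\nu_{n+k},\nu_n)] \to 0$ and Fubini), then uses proximality and the structure of $\mathrm{supp}(\nu)$ to show that the closed convex hull of $\{h_\ast\nu : h \in G\}$ contains every Dirac mass, which forces $(w_n)_\ast\delta_x \to \nu_\infty(\mathbf{w})$ and hence $\nu_\infty(\mathbf{w})$ to be a point mass; Deroin--Kleptsyn--Navas instead control the inverse images $w_n^{-1}$ via the reflected walk. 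Either way, this is the analytic heart of the theorem and it is missing from your proposal. Two smaller points: your argument that $\nu$ charges no gap of a Cantor minimal set cannot literally "average over the countable orbit" (an infinite orbit admits no invariant average); the correct route is the same maximal-mass argument you use for atoms, producing a finite orbit of gaps. And uniqueness requires knowing that the martingale limit of an \emph{arbitrary} stationary measure $\nu'$ coincides with $\delta_{\xi(\mathbf{w})}$, which again rests on the strong contraction statement rather than on $\nu' = \E[\nu'_\infty]$ alone.
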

	
	Given $g \in \mathrm{Homeo}_+(S^1)$, denote by $\mathrm{supp}(g)$ the closure of the set $\{x \in S^1 : g(x) \neq x\}$. Recall that an action $G \curvearrowright S^1$ is \emph{topologically nonfree} if there exists an element $g \in G$ such that $\mathrm{supp}(g)$ is nonempty and is not all of $S^1$.
	
	The proof of Theorem \ref{teo:classification} shows that, whenever $G$ is a group acting minimally and proximally on $S^1$, any open subset of $S^1$ can be contracted into any nonempty open subset of $S^1$ under the action of $G$. For the convenience of the reader, we present here a (probabilistic) proof of this result.
	\begin{proposition} \label{proposition:small support}
		Consider a minimal and proximal group action $G \curvearrowright S^1$ by orientation-preserving homeomorphisms. Then, for any pair of nonempty closed intervals $I, J \subsetneqq S^1$ with nonempty interior there exists $g \in G$ such that $g(I) \subseteq J$. If the action $G \curvearrowright S^1$ is furthermore topologically nonfree, then for every nontrivial interval $J \subseteq S^1$ there exists $a \in G \setminus \{e_G\}$ such that $\mathrm{supp}(a) \subseteq J$.
	\end{proposition}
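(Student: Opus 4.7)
The plan is to prove the two statements in sequence by a topological argument combining proximality (to shrink $I$) with minimality (to translate the contracted interval into $J$). The ``probabilistic'' qualifier in the paper seems to refer only to the underlying source of the contracting dynamics, supplied indirectly through Theorem \ref{teo:dkn}, while the core argument is purely topological.

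For the first assertion, let $I, J \subsetneqq S^1$ be proper closed intervals with nonempty interior. Proximality of the action yields a sequence $g_n \in G$ with $\mathrm{diam}(g_n(I)) \to 0$. By compactness of the hyperspace of nonempty closed subsets of $S^1$ under the Hausdorff metric, after passing to a subsequence I may assume that $g_n(I)$ converges in Hausdorff distance to a closed set of zero diameter, that is, to a single point $p \in S^1$. Minimality of the action then furnishes some $h \in G$ with $h(p)$ in the interior $J^o$ of $J$. Since $h$ is a homeomorphism of $S^1$ (hence uniformly continuous), $h(g_n(I))$ converges in Hausdorff distance to $\{h(p)\} \subseteq J^o$, and because $J^o$ is open we obtain $h(g_n(I)) \subseteq J^o \subseteq J$ for all sufficiently large $n$. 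Setting $g := h\, g_n$ for any such $n$ concludes the first statement.

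For the second assertion, fix a nontrivial interval $J \subseteq S^1$. Topological nonfreeness supplies an element $b \in G \setminus \{e_G\}$ whose fixed-point set has nonempty interior; choose a nontrivial open interval $W$ inside this interior. Then $\mathrm{supp}(b) = \overline{\{x \in S^1 : b(x) \neq x\}}$ is disjoint from $W$, and therefore is contained in $I_0 := S^1 \setminus W$, which is a proper closed interval of $S^1$ with nonempty interior. If $J = S^1$, take $a := b$; otherwise, pick a proper closed subinterval $J' \subseteq J$ with nonempty interior and apply the first assertion to $I_0$ and $J'$ to obtain $g \in G$ with $g(I_0) \subseteq J'$. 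Then $a := g b g^{-1} \in G \setminus \{e_G\}$ satisfies $\mathrm{supp}(a) = g(\mathrm{supp}(b)) \subseteq g(I_0) \subseteq J' \subseteq J$.

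There is no substantial obstacle in the argument. The only real step is the extraction of a well-defined accumulation point $p$ from the contracting sequence $(g_n(I))$ via compactness of the Hausdorff hyperspace on $S^1$; once $p$ is available, minimality moves it into any target open set while the continuity of the translating element preserves the Hausdorff convergence. The second statement then reduces to the first by a conjugation argument, once $\mathrm{supp}(b)$ has been enclosed in a proper closed interval.
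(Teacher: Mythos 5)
Your proof is correct, and it takes a genuinely different route from the paper. The paper's argument is probabilistic: it fixes a nondegenerate measure $\mu$ with support equal to $G$, invokes Theorem \ref{teo:dkn} to get that $\mathrm{diam}(w_n^{-1}(I))\to 0$ with positive probability (using that the stationary measure has full support, so $\xi(\mathbf{w})\notin I$ with positive probability), and then uses the almost-sure density of the orbit $\{w_n^{-1}(x)\}$ under the random walk to land the shrunken interval inside $J$. You instead work deterministically: proximality directly supplies a contracting sequence $g_n(I)$, compactness of the Hausdorff hyperspace extracts a limit point $p$, and minimality (density of the $G$-orbit of $p$) provides an element $h$ moving $p$ into $J^o$; continuity of $h$ then finishes the job. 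Your version is more elementary and self-contained — it uses only the definitions of proximal and minimal, with no appeal to stationary measures or to the Deroin--Kleptsyn--Navas convergence theorem — which is arguably closer in spirit to the purely topological proof the paper alludes to via Theorem \ref{teo:classification}. The paper's probabilistic detour buys nothing logically here beyond fitting the ambient framework of the article. Your treatment of the second statement by conjugation is the same as the paper's, and you correctly handle the edge case $J=S^1$, which the paper's reduction to the first statement silently skips.
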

	\begin{proof}
		Let $\mu$ be a probability measure on $G$ with $\supp{\mu}=G$ and let $I,J$ be nonempty closed proper intervals of $S^1$ with nonempty interior. Item \eqref{it:dkn2} of Theorem \ref{teo:dkn} states that for $\P$-almost every trajectory $\mathbf{w}\in G^{\mathbb{N}}$ and for any closed interval $K \subseteq S^1$ that does not contain $\xi(\mathbf{w})$, we have $\lim_{n \to \infty} \mathrm{diam}(w_n^{-1}(K)) = 0$. The distribution $\nu$ of $\xi(\mathbf{w})$ has $\supp{\nu}=S^1$, and therefore with positive probability $\xi(\mathbf{w})\notin I$. From this, we obtain $\lim_{n \to \infty} \mathrm{diam}(w_n^{-1}(I)) = 0$. However, since $G \curvearrowright S^1$ is minimal and $\mu$ is nondegenerate, we have that $\P$-almost surely for any $x \in S^1$ the orbit $\{w_n^{-1}(x)\}_{n \geq 0}$ is dense in $S^1$ \cite[Theorem 3.3]{Furman2002}. In particular, when $x$ is the left endpoint of $I$, the above implies that $\mathbf{w} \in G^\N$ and $n \in \N$ such that $w_n^{-1}(I) \subset J$. This shows the first statement of the proposition.
		
		Now let us suppose that the action of $G$ on $S^1$ is topologically nonfree then there is $b \in G \setminus \{e_G\}$ such that $\mathrm{supp}(b)\neq S^1$. Thus, we may choose a closed interval $I \subsetneqq S^1$ that contains $\mathrm{supp}(b)$. Using the first statement of the proposition, we can find $g \in G$ such that $g(I) \subseteq J$. Then, the element $a = gbg^{-1} \in G \setminus \{e_G\}$ satisfies $\mathrm{supp}(a) \subset J$. This shows the second statement of the proposition.
	\end{proof}
	
\section{Lower bounds on entropy} \label{sec:entropy}
In this section we prove a general criterion to show that mean conditional asymptotic entropy is positive. It consists of a conditional version of the method used by A. Erschler \cite{Erschler2003} to show that the asymptotic entropy $h(\mu)$ of a random walk is positive. 

\subsection{Estimating asymptotic entropy}
We start by recalling the method of \cite{Erschler2003} to estimate the asymptotic entropy of a random walk on a group. We do this exclusively for expositional purposes since this result will not be used in the proofs of our theorems. Nonetheless, we believe that understanding the statement of Theorem \cite[Theorem 2.1]{Erschler2003} (see Theorem \ref{thm:erschler} below) is useful for the understanding of its conditional version that we state below (Theorem \ref{thm:relEntropy}), and which is key for the proof of Theorem \ref{thm:entropy}.

Let us first recall some definitions from \cite{Erschler2003} for a probability measure $\mu$ on a countable group $G$. A \emph{collection of length $n \in \N$} is a tuple $q = (\mathbf{x}, i_1, \ldots, i_k)$ where $0 < i_1 < \cdots < i_k \leq n$ are integers and
\[
\mathbf{x} = (x_1, x_2,\ldots, x_{i_1-1}, x_{i_1 + 1},\ldots, x_{i_2 - 1}, x_{i_2 + 1}, \ldots, x_{i_k-1}, x_{i_k + 1}, \ldots, x_n)
\] is an $(n-k)$-tuple of elements of $\mathrm{supp}(\mu)$. Notice that we index the elements of $\mathbf{x}$ by integers in $\{1,\ldots, n\} \setminus \{i_1,\ldots, i_k\}$. 

For each $a,b \in \mathrm{supp}(\mu)$ and $q = (\mathbf{x}, i_1, \ldots, i_k)$ a collection of length $n$, let us define $T^{a,b}(q)$ to be the \emph{set of trajectories} $\mathbf{y} = (y_1, \ldots, y_n) \in  G^n$ such that, after setting $y_0 = 1$, $i_0 = 0$, we have
\begin{itemize}
	\item for all $l \in \{1,\ldots, n\} \setminus \{i_1,\ldots, i_k\}$ we have $y_l = y_{l-1}x_l$, and
	\item for all $l \in \{i_1, \ldots, i_l\}$, we have $y_l = y_{l-1} a$ or $y_l = y_{l-1} b$. 
\end{itemize} Thus, $T^{a,b}(q)$ is a set of $2^k$ trajectories of the $\mu$-random walk on $G$ up to time $n$. We say that $T^{a,b}(q)$ is \emph{satisfactory} if all trajectories in $T^{a,b}(q)$ arrive at different elements of $G$ at time $n$. That is, if $(y_1, \ldots ,y_n), (y'_1, \ldots ,y'_n)$ are distinct trajectories in $T^{a,b}(q)$, then $y_n \neq y'_n$. 

For a trajectory $\mathbf{y} = (y_1, \ldots, y_n)$, we define the \emph{jumps} of $\mathbf{y}$ as $g_j = y_{j-1}^{-1} y_j \in \mathrm{supp}(\mu)$ for all $1 \leq j \leq n$, and we denote by 
\[
[\mathbf{y}] = \{\mathbf{w} \in G^\N \mid w_j = y_j \text{ for all }1 \leq j \leq n\}
\] the cylinder defined by $\mathbf{y}$.

\begin{thm}[\cite{Erschler2003}] \label{thm:erschler}
	Let $\mu$ be a probability measure on a countable group $G$ with $H(\mu)<\infty$. Suppose that there exist $p,c>0$ such that for each $n \in \N_+$ there is a set $A_n$ of collections of length $n$ that verify the following conditions.
	\begin{enumerate}
		\item \label{item:entropy1} For each $q=(\mathbf{x},i_1,\ldots, i_k)\in A_n$ we have $k\ge cn$.
		\item \label{item:entropy3} For each $q\in A_n$ the set of trajectories $T^{a,e_G}(q)$ is satisfactory.
		\item \label{item:entropy4} For each $q_1,q_2\in A_{n}$ with $q_1\neq q_2$ we have $[\mathbf{y}_1] \cap [\mathbf{y}_2]= \emptyset$ whenever $\mathbf{y}_1 \in T^{a,e_G}(q_1)$ and $\mathbf{y}_2 \in T^{a,e_G}(q_2)$.
		\item \label{item:entropy5} We have
		\[
		\P\left[ \bigcup_{q\in A_n} \bigcup_{\mathbf{y} \in T^{a,e_G}(q)} [\mathbf{y}] \right]\ge p.
		\]
	\end{enumerate}
	Then  $h(\mu)>0$.
\end{thm}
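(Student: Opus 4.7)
I would establish the linear lower bound $H(\mu^{\ast n}) \geq \kappa n$ for some $\kappa > 0$ and all $n \geq 1$: since $n \mapsto H(\mu^{\ast n})$ is subadditive, Fekete's lemma then yields $h(\mu) = \lim_n H(\mu^{\ast n})/n \geq \kappa > 0$. First I would reduce to the case $\mu(e_G) > 0$, replacing $\mu$ with $\mu_{\mathrm{lazy}} = \tfrac{1}{2}\mu + \tfrac{1}{2}\delta_{e_G}$ if necessary via Lemma \ref{lemma: lazy boundaries}. Set $r \coloneqq \mu(a)/(\mu(a) + \mu(e_G)) \in (0,1)$ and let $h_2(r) \coloneqq -r\log r - (1-r)\log(1-r) > 0$.

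The key idea is to extract a Bernoulli product structure conditionally on the identity of the collection. Define the random variable $Q$ taking values in $A_n \sqcup \{\bot\}$ by $Q = q$ if the trajectory lies in $\bigcup_{\mathbf{y} \in T^{a,e_G}(q)}[\mathbf{y}]$ and $Q = \bot$ otherwise. Condition \eqref{item:entropy4} gives $\P[Q \neq \bot] \geq p$, and condition \eqref{item:entropy4} together with condition \eqref{item:entropy4} makes $Q$ well-defined. For fixed $q = (\mathbf{x}, i_1, \ldots, i_k) \in A_n$, the event $\{Q = q\}$ coincides on the first $n$ increments $g_1, \ldots, g_n$ with the product event
\[
\bigcap_{l \notin \{i_1,\ldots,i_k\}} \{g_l = x_l\} \cap \bigcap_{j=1}^{k} \{g_{i_j} \in \{a, e_G\}\},
\]
so by independence of the increments, conditional on $\{Q = q\}$ the jumps at the special times $i_1, \ldots, i_k$ are i.i.d. Bernoulli$(r)$-distributed on $\{a, e_G\}$.

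The satisfactory hypothesis \eqref{item:entropy3} states precisely that the map $(\epsilon_1, \ldots, \epsilon_k) \in \{a, e_G\}^k \mapsto w_n \in G$ is injective. Hence, conditional on $\{Q = q\}$, the distribution of $w_n$ is the pushforward of $k$ independent Bernoulli$(r)$ variables by an injection, and its Shannon entropy equals $k \, h_2(r) \geq c n \, h_2(r)$ by condition \eqref{item:entropy1}. Using $H(w_n) \geq H(w_n \mid Q)$ and discarding the nonnegative contribution of the atom $\{Q = \bot\}$, one finds
\[
H(\mu^{\ast n}) = H(w_n) \;\geq\; \sum_{q \in A_n} \P[Q = q]\, H(w_n \mid Q = q) \;\geq\; p \cdot c n \cdot h_2(r),
\]
and dividing by $n$ yields $h(\mu) \geq p c h_2(r) > 0$.

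The only delicate point is the product-event factorization of $\{Q = q\}$ on the first $n$ increments, which is what makes the conditional Bernoulli description rigorous; this relies on conditions \eqref{item:entropy3} and \eqref{item:entropy4} working together to guarantee that distinct pairs $(q, \mathbf{y})$ correspond to disjoint cylinders whose endpoints within each $q$ remain distinct. Once this is set down carefully, the rest is a direct entropy computation and I do not anticipate any further obstacle.
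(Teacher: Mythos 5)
Your argument is correct and is essentially the same as the paper's method (the paper only cites \cite{Erschler2003} for this statement, but proves the conditional analogue, Theorem \ref{thm:relEntropy}, with exactly your skeleton): condition on the collection $Q$, which is well defined by conditions (\ref{item:entropy3})--(\ref{item:entropy4}); note that conditionally on $\{Q=q\}$ the increments at the distinguished times are i.i.d.\ Bernoulli on $\{a,e_G\}$ and that satisfactoriness makes the endpoint map injective, so $H(w_n\mid Q=q)=k\,h_2(r)\ge cn\,h_2(r)$; then sum over $q$ and use $\P[Q\neq\bot]\ge p$. Two cosmetic remarks: your citations of the hypotheses are garbled (the bound $\P[Q\neq\bot]\ge p$ is condition (\ref{item:entropy5}), not (\ref{item:entropy4})), and the lazy-walk reduction is unnecessary --- and slightly awkward, since the sets $A_n$ are given for $\mu$ and would have to be re-verified for $\mu_{\mathrm{lazy}}$ --- because the nonemptiness of the sets $T^{a,e_G}(q)$ already forces $a,e_G\in\supp{\mu}$, as the paper observes at the start of the proof of Theorem \ref{thm:relEntropy}.
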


To gain some intuition, here is a vague rephrasing of the assumptions. With positive probability, the random walk trajectory is uniquely assigned a linear number of distinguished times along the trajectory together with fixed steps for all other time instants. At any of these distinguished times, the choice between doing an increment of $e_G$ or of $a$ will lead the trajectory to different endpoints, regardless of which choices are made at later distinguished instants.

\subsection{Estimating conditional asymptotic entropy}

We have the following conditional version of Theorem \ref{thm:erschler}, which has a similar structure to Theorem \ref{thm:erschler} but where an additional assumption is needed to handle the transition probabilities conditional to a boundary point.

\begin{thm} \label{thm:relEntropy}
	Let $\mu$ be a probability measure on a countable group $G$ with $H(\mu)<\infty$ and let $\mathbf{X} = (X,\nu)$ be a $\mu$-boundary of $G$. Consider an element $a \in G \setminus \{e_G\}$ and a measurable subset $J \subset X$ such that $a(x)=x$ for each $x\in X\backslash J$. Suppose that there exist $p,c > 0$, for each $n \in \N_+$ a set $\Xi_n \subseteq X$ of measure $\nu(\Xi_n) \geq p$, and for $\nu$-almost every $\xi \in \Xi_n$ a set $A_{n,\xi}$ of collections of length $n$ that verify the following conditions.
	\begin{enumerate}
		\item \label{item:rentropy1} For each $q=(\mathbf{x},i_1,\ldots, i_k)\in A_{n,\xi}$ we have $k\geq cn$.
		\item \label{item:rentropy3} For each $q\in A_{n,\xi}$ the set of trajectories $T^{a,e_G}(q)$ is satisfactory.
		\item \label{item:rentropy4} For each $q_1,q_2\in A_{n,\xi}$ with $q_1\neq q_2$ we have $[\mathbf{y}_1] \cap [\mathbf{y}_2]= \emptyset$ whenever $\mathbf{y}_1 \in T^{a,e_G}(q_1)$ and $\mathbf{y}_2 \in T^{a,e_G}(q_2)$.
		\item \label{item:rentropy5} We have \[
		\P^{\xi}\left[ \bigcup_{q\in A_{n,\xi}} \bigcup_{\mathbf{y} \in T^{a,e_G}(q)} [\mathbf{y}]\right]\geq p.
		\]
		\item \label{item:rentropy2} For each $q= (\mathbf{x},i_1,\ldots, i_k)\in A_{n,\xi}$ and any $(y_1, \cdots, y_n) \in T^{a,e_G}(q)$ we have  \[    	y_{i_r - 1}^{-1}(\xi) \not \in J \text{ for all } 1 \leq r \leq k. \]
	\end{enumerate}
	Then the asymptotic mean conditional entropy $h(\mu \mid X) = \lim_{n \to \infty} H_\mathbf{X}(\alpha_n)/n$ is positive.
\end{thm}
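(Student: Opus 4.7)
The plan is to establish a linear lower bound $H_{\mathbf{X}}(\alpha_n) \geq p^2 c h_0\, n$ for some constant $h_0 > 0$ by running a conditional analogue of Erschler's argument from Theorem \ref{thm:erschler}; dividing by $n$ and passing to the limit will then yield $h(\mu \mid \mathbf{X}) \geq p^2 c h_0 > 0$. First, fix $n \in \N_+$ and $\xi \in \Xi_n$, set $B_q \coloneqq \bigcup_{\mathbf{y} \in T^{a, e_G}(q)} [\mathbf{y}]$ for each $q \in A_{n, \xi}$, and introduce the random variable $Q$ on the space of sample paths that returns the unique $q \in A_{n,\xi}$ with $\mathbf{w} \in B_q$ when it exists (well-defined by Condition \ref{item:rentropy4}) and $\bot$ otherwise. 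On the event $\{Q = q\}$ I also record the vector $\vec{\epsilon} = (\epsilon_1, \ldots, \epsilon_k) \in \{a, e_G\}^k$ of increments of the random walk at the distinguished times $i_1, \ldots, i_k$ of $q$.

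The heart of the proof, and the step where Condition \ref{item:rentropy2} is used, will be to establish a factorization
\begin{equation*}
\P^{\xi}\bigl[[\mathbf{y}]\bigr] \;=\; C(q, \xi) \prod_{r = 1}^k \mu(\epsilon_r) \qquad \text{for every } \mathbf{y} \in T^{a,e_G}(q),
\end{equation*}
with a factor $C(q, \xi)$ depending only on $q$ and $\xi$. Expanding $\P^{\xi}$ via the conditional transition probabilities in Equation \eqref{eq: conditional transition probs}, I verify two facts. First, at each distinguished time $i_r$, Lemma \ref{lemma:radonNikodym} applied with $g = y_{i_r - 1}$ shows that the Radon--Nikodym ratio $\dd(y_{i_r})_{\ast}\nu / \dd(y_{i_r - 1})_{\ast}\nu$ evaluated at $\xi$ equals $1$ for both choices $\epsilon_r \in \{a, e_G\}$, since Condition \ref{item:rentropy2} places $y_{i_r - 1}^{-1}(\xi)$ outside $J$. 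Second, because $a$ and $a^{-1}$ fix $X \setminus J$, the same condition yields $y_{i_r}^{-1}(\xi) = y_{i_r - 1}^{-1}(\xi)$ regardless of the choice of $\epsilon_r$; an induction on $r$ then propagates this to show that $y_l^{-1}(\xi)$ depends only on $q$, not on $\vec{\epsilon}$, so the remaining Radon--Nikodym factors at non-distinguished times are functions of $(q, \xi)$ alone. I expect tracking this independence of the orbit of $\xi$ from the increments $\vec{\epsilon}$ to be the main subtle point of the argument.

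With the factorization in hand, the conditional law of $\vec{\epsilon}$ under $\P^{\xi}(\,\cdot \mid Q = q)$ is immediately the product Bernoulli distribution with marginals $\mu(a)/(\mu(a) + \mu(e_G))$ and $\mu(e_G)/(\mu(a) + \mu(e_G))$, whose entropy equals $k \cdot h_0$ for the corresponding binary entropy $h_0 > 0$. The satisfactoriness in Condition \ref{item:rentropy3} makes the map $\vec{\epsilon} \mapsto w_n$ injective on $\{Q = q\}$, so $H_{\xi}(w_n \mid Q = q) = k h_0 \geq c h_0 n$ by Condition \ref{item:rentropy1}. Since conditioning cannot increase entropy, Condition \ref{item:rentropy5} then yields
\begin{equation*}
H_{\xi}(\alpha_n) \;\geq\; H_{\xi}(w_n \mid Q) \;\geq\; \P^{\xi}\!\left[\bigcup_{q \in A_{n,\xi}} B_q\right] \cdot c h_0 n \;\geq\; p c h_0 n
\end{equation*}
for every $\xi \in \Xi_n$. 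Integrating over $\xi$ and using $\nu(\Xi_n) \geq p$ finally produces $H_{\mathbf{X}}(\alpha_n) \geq p^2 c h_0 n$, and the finiteness hypothesis $H(\mu) < \infty$ ensures that all mean conditional entropies appearing above are finite so that the entropy manipulations are meaningful.
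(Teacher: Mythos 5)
Your argument is essentially the paper's proof: you condition on the unique collection $q\in A_{n,\xi}$ whose cylinders contain the trajectory (well-defined by Condition \eqref{item:rentropy4}), use Lemma \ref{lemma:radonNikodym} together with Condition \eqref{item:rentropy2} to factor $\P^{\xi}[[\mathbf{y}]]=C(q,\xi)\prod_{r}\mu(\epsilon_r)$, deduce from satisfactoriness and Condition \eqref{item:rentropy1} that the entropy of $w_n$ conditioned on $\{Q=q\}$ is $k(q)h_0\geq cnh_0$, and integrate using Conditions \eqref{item:rentropy5} and $\nu(\Xi_n)\geq p$ to obtain $H_{\mathbf{X}}(\alpha_n)\geq p^2ch_0n$, which is exactly the paper's decomposition and final bound. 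One remark on what you call the main subtle point: knowing that $y_l^{-1}(\xi)$ is independent of $\vec{\epsilon}$ does not by itself imply that the Radon--Nikodym factors at non-distinguished times depend only on $(q,\xi)$, since $\frac{\dd (y_l)_{*}\nu}{\dd (y_{l-1})_{*}\nu}(\xi)$ is a function of the group elements $y_{l-1},y_l$ and not merely of the orbit of $\xi$; the clean justification changes one distinguished increment at a time and runs the computation from the proof of Lemma \ref{lemma:radonNikodym} on $y_{i_r-1}au$ versus $y_{i_r-1}u$ for the common suffix $u$, using $\xi\notin y_{i_r-1}(J)$ --- a step that the paper's own displayed computation likewise leaves implicit, so this is a presentational refinement rather than a gap.
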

\begin{proof}
	Note that Condition \eqref{item:rentropy4} implies that all the sets $A_{n, \xi}$ are nonempty, and hence the definition of $T^{a,e_G}(q)$ for a collection $q$ and Condition \eqref{item:rentropy1} imply that $a$ and $e_G$ belong to the support of $\mu$.
	
	Let us fix the notation that we will use in the rest of the proof. Given a collection $q$, denote by $Q^{a,e_G}(q) \subset G^\N$ the union $\bigcup_{\mathbf{y} \in T^{a,e_G}(q)} [\mathbf{y}]$. Fix $n \in \N$, and consider $\xi \in X$, a countable measurable partition $\eta$ of $G^\N$ and $q$ a collection of length $n$ such that $\P^\xi[Q^{a,e_G}(q)] > 0$.  Denote by $H^\xi(\eta, q)$ the entropy of the partition
	\(\{P \cap Q^{a,e_G}(q) \mid P \in \eta\} \) of $Q^{a,e_G}(q)$ with respect to the normalized probability measure $\P^\xi/\P^\xi\left[Q^{a,e_G}(q)\right]$ restricted to $Q^{a,e_G}(q)$. If $\P^\xi\left[Q^{a,e_G}(q)\right] = 0$, we set $H^\xi(\eta, q) = 0$.
	
	Let $\alpha_n$ be the partition of $G^\N$ where two trajectories belong to the same atom of $\alpha_n$ if and only if they hit the same element of the group at time $n$. Then we have
	\begin{equation} \label{eq:relEntropyBound}
		H_{\mathbf{X}}(\alpha)= \int_{X} H^\xi(\alpha_n) \dd \nu(\xi)\ge \int_X \sum_{q \in A_{n,\xi}}H^\xi(\alpha_n, q) \P^\xi\left[Q^{a,e_G}(q)\right] \dd \nu(\xi),
	\end{equation} where in the last inequality we used the fact that the $Q^{a,e_G}(q)$ are disjoint, thanks to Condition \eqref{item:rentropy4}.
	
	By Condition \eqref{item:rentropy3}, the partition $\{P \cap Q^{a,e_G}(q) \mid P \in \alpha_n\}$ coincides with the partition
	\[
	\{\, [\mathbf{y}] : \mathbf{y} \in T^{a,e_G}(q)\}.
	\] Hence, by Lemma \ref{lemma:radonNikodym} and Condition \eqref{item:rentropy2}, we obtain that for any $\mathbf{y} \in T^{a,e_G}(q)$ with increments $g_1, \ldots, g_n$ we have
	\begin{align*}
		\P^\xi\left[ [\mathbf{y}] \right]& = \prod_{j = 1}^n \P^\xi\left[ w_j = g_1 \cdots g_{j-1} g_j \mid w_{j-1}= g_1 \cdots g_{j-1}\right]\\
		& = \prod_{r = 1}^k \mu(g_{i_r}) \prod_{\substack{j = 1 \\ j \not \in \{i_s\}_s}}^n \P^\xi\left[ w_j = g_1 \cdots g_{j-1} g_j \mid w_{j-1}= g_1 \cdots g_{j-1}
		\right] \\
		& = \prod_{r = 1}^k \frac{\mu(g_{i_r})}{\mu(a)+\mu(e_G)} 
		\prod_{r = 1}^k \P^\xi\left[ w_{i_r} w_{i_r-1}^{-1} \in \{a, e_G\} \mid w_{i_r-1} \right]
		\prod_{\substack{j = 1 \\ j \not \in \{i_s\}_s}}^n \P^\xi\left[ w_{j} w_{j-1}^{-1} = x_j \mid w_{j-1} \right]
		\\ & = \prod_{r = 1}^k \frac{\mu(g_{i_r})}{\mu(a)+\mu(e_G)} \P^\xi\left[ Q^{a,e_G}(q)\right].
	\end{align*} We deduce that
	\[
	\frac{\P^\xi[[\mathbf{y}]]}{\P^\xi\left[ Q^{a,e_G}(q)\right] }= \frac{\mu(a)^{A(\mathbf{y})}\mu(e_G)^{k - A(\mathbf{y})}}{(\mu(a) + \mu(e_G))^k},
	\] where $A(\mathbf{y}) \in \N$ is the number of times that $a$ appears in the sequence $(g_{i_r})_{r = 1}^k$. Denote by $\rho$ the Bernoulli measure on $\{a, e_G\}$ giving weight $\frac{\mu(a)}{\mu(e_G)+\mu(a)}$ to $a$ and $\frac{\mu(e_G)}{\mu(e_G)+\mu(a)}$ to $e_G$. We conclude that $H^\xi(\alpha_n, q) = H(\rho^{k(q)}) = k(q)H(\rho)$. Since $\mu(a)$ and $\mu(e_G)$ are positive, the quantity $H(\rho)$ is also positive.
	
	We have thus from Condition \eqref{item:rentropy1} that $H^\xi(\alpha_n, q) = k(q) H(\rho) \geq cn H(\rho)$, and therefore
	\begin{align*}
		\int_{\Xi_n} \sum_{q \in A_{n,\xi}}H^\xi(\alpha_n, q) \P^\xi\left[Q^{a,e_G}(q)\right] \dd \nu(\xi) & \geq cn H(\rho)  \int_{\Xi_n} \P^\xi\left[ \bigcup_{q \in A_{n,\xi}} Q^{a,e_G}(q) \right] \dd \nu(\xi) \\
		& \geq cn H(\rho) p^2.
	\end{align*} Finally, from Equation \eqref{eq:relEntropyBound} we deduce that $H_{\mathbf{X}}(\alpha) \geq cH(\rho)p^2 n$ for any $n \in \N$. This shows that $h(\mu\mid \mathbf{X})\ge cH(\rho)p^2>0$ and finishes the proof.
\end{proof}

\section{Good collections} \label{sec:good}
Let $\mu$ be a nondegenerate probability measure on a countable subgroup $G$ of $\mathrm{Homeo}_+(S^1)$ acting minimally and proximally on $S^1$. Let $a \in G \setminus \{e_G\}$ be an element such that $S^1 \setminus \mathrm{supp}(a)$ has nonempty interior, and let $J \subsetneqq S^1$ be the smallest closed interval containing $\mathrm{supp}(a)$.

\begin{definition} \label{def: domination of intervals}
	Given two closed intervals $I_1, I_2 \subset S^1$, we say that $I_1$ \emph{dominates} $I_2$ if they are disjoint or if the interior of $I_1$ contains $I_2$.
\end{definition}

\begin{lemma} \label{lemma:good}
	Fix a collection $q = (\mathbf{x}, i_1, \ldots, i_k)$ and consider two trajectories of length $n$ given by $(y_1, \ldots, y_n), (\widetilde{y}_1, \ldots, \widetilde{y}_n) \in T^{a,e_G}(q)$.
	
	Then, for each $1 \leq r \leq k$ the following statements are equivalent.
	\begin{enumerate}
		\item \label{item:good1} For all $0 \leq l < i_r - 1$, the interval $y_{i_r - 1}(J)$ dominates $y_l(J)$.
		\item \label{item:good2} For all $0 \leq l < i_r - 1$, the interval $\widetilde{y}_{i_r - 1} (J)$ dominates $\widetilde{y}_l(J)$.
	\end{enumerate}
	Whenever these conditions are met, we have $y_{i_r -1}(J) = \widetilde{y}_{i_r - 1}(J)$ for all $0 \leq r \leq k$ and that $q$ is satisfactory.
\end{lemma}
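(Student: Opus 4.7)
The plan rests on the algebraic observation that $a(J)=J$: since $\mathrm{supp}(a)\subseteq J$, the homeomorphism $a$ fixes $S^1\setminus J$ pointwise, and by continuity it also fixes $\partial J$, so $a$ preserves $J$ setwise. In particular $y_{i_s}(J)=y_{i_s-1}(J)=:I_s$ for every $s$. I would then track the difference cocycle $d_l\coloneqq \widetilde{y}_l\, y_l^{-1}$: a direct computation shows that $d_l$ is constant at non-distinguished times, while at each $i_s$ with $\epsilon_s\neq\widetilde{\epsilon}_s$ it is right-multiplied by a conjugate $b_s^{\pm 1}\coloneqq y_{i_s-1}\,a^{\pm 1}\,y_{i_s-1}^{-1}$, whose support is contained in $I_s$. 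The identity $\widetilde{y}_l(J)=d_l(y_l(J))$ then reduces the comparison of the two trajectories to understanding how $d_l$ moves $y_l(J)$ relative to $I_r$.

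The main consequence of condition~(1) at $r$ to be extracted is the following. First, (1) forces $i_s<i_r-1$ for every $s<r$, since otherwise $I_r$ would need to dominate itself, impossible for a proper nonempty interval of $S^1$. Applied at $l=i_s$, condition~(1) then yields that each $I_s$ with $s<r$ either sits in $\mathrm{int}(I_r)$ or is disjoint from $I_r$. A short case analysis shows that every such $b_s^{\pm 1}$ preserves $\mathrm{int}(I_r)$ and $S^1\setminus I_r$ setwise, and therefore so does the product $d_l$; in particular $d_l$ preserves $I_r$ setwise. Taking $l=i_r-1$ gives $\widetilde{y}_{i_r-1}(J)=d_{i_r-1}(I_r)=I_r=y_{i_r-1}(J)$. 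More generally, whenever $I_r$ dominates $y_l(J)$ it dominates $d_l(y_l(J))=\widetilde{y}_l(J)$ as well; combined with $\widetilde{y}_{i_r-1}(J)=I_r$ this is precisely~(2), and the reverse implication follows by symmetry.

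For satisfactoriness, I would suppose $y_n=\widetilde{y}_n$; unfolding $d_l$ yields
\[
d_n=\prod_{s\in S}b_s^{\pm 1}=e_G,
\]
where $S=\{s:\epsilon_s\neq\widetilde{\epsilon}_s\}$ and the product is ordered by increasing $s$. Assume for contradiction that $S\neq\emptyset$ and set $s^*=\max S$. By dominance, $S\setminus\{s^*\}$ partitions as $S_{\mathrm{in}}=\{s:I_s\subset\mathrm{int}(I_{s^*})\}$ and $S_{\mathrm{out}}=\{s:I_s\cap I_{s^*}=\emptyset\}$. Setting $Q_{\mathrm{in}}=\prod_{s\in S_{\mathrm{in}}}b_s^{\pm 1}$ and $Q_{\mathrm{out}}=\prod_{s\in S_{\mathrm{out}}}b_s^{\pm 1}$, the elements appearing in $Q_{\mathrm{out}}$ have supports disjoint from those of $Q_{\mathrm{in}}$ and $b_{s^*}^{\pm 1}$ and hence commute past them, giving $d_n=(Q_{\mathrm{in}}\cdot b_{s^*}^{\pm 1})\cdot Q_{\mathrm{out}}$ with the two factors supported respectively in $I_{s^*}$ and $S^1\setminus I_{s^*}$. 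Each factor must then equal $e_G$ separately. Now $\mathrm{supp}(Q_{\mathrm{in}})\subseteq\bigcup_{s\in S_{\mathrm{in}}}I_s$ is a compact subset of $\mathrm{int}(I_{s^*})$ and is bounded away from $\partial I_{s^*}$, while the non-fixed points of $b_{s^*}^{\pm 1}$ accumulate on $\partial I_{s^*}$, because the minimality of $J$ forces $\partial J\subseteq\mathrm{supp}(a)$ and conjugation by $y_{i_{s^*}-1}$ transports this accumulation to $b_{s^*}$. Picking a non-fixed point $x$ of $b_{s^*}^{\pm 1}$ close enough to $\partial I_{s^*}$ that both $x$ and $b_{s^*}^{\pm 1}(x)$ lie outside $\mathrm{supp}(Q_{\mathrm{in}})$ yields $(Q_{\mathrm{in}}\cdot b_{s^*}^{\pm 1})(x)=b_{s^*}^{\pm 1}(x)\neq x$, contradicting triviality.

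I expect this last step to be the main obstacle: the interval equality and the equivalence~(1)~$\Leftrightarrow$~(2) follow cleanly from the support analysis of $d_l$, but satisfactoriness forces one to combine the fine boundary behavior of $a$ at $\partial J$ (encoded in the minimality of $J$) with the hierarchical nesting of the $I_s$ provided by the dominance.
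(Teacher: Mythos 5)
Your proposal is correct and follows essentially the same route as the paper: the difference cocycle $d_l=\widetilde{y}_l y_l^{-1}$ is exactly the paper's product of conjugates $e_s^{y_{i_s-1}}$, the equivalence of (1) and (2) is obtained from the same support/domination analysis (the paper phrases it as a backward induction on $l$), and the satisfactoriness argument via the outermost interval $I_{s^*}$ and the accumulation of non-fixed points of $a$ on $\partial J$ (from minimality of $J$) is precisely the paper's final step. If anything, your treatment of that last step is slightly more explicit than the paper's about why the boundary neighborhood forces nontriviality.
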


\begin{proof}
	By symmetry, to prove the first statement it suffices to see that condition \eqref{item:good1} implies condition \eqref{item:good2}. Denote by $g_1, \ldots, g_n$ and $\widetilde{g}_1, \ldots, \widetilde{g}_n$ the jumps of $(y_1, \ldots, y_n)$ and $(\widetilde{y}_1, \ldots, \widetilde{y}_n)$ respectively. We have $i_{r-1} < i_r - 1$ for every $1 \leq r \leq k$: indeed, if $i_{r-1} = i_r - 1$ then $y_{i_r - 2}^{-1} y_{i_r}(J) = g_{i_{r-1}} g_{i_r}(J) = J$ so $y_{i_r}(J) = y_{i_r - 2}(J)$, contradicting condition \eqref{item:good1}.
	
	Fix $1 \leq r \leq k$. We will prove by backwards induction on $l = i_r - 2, i_r - 3, \ldots, 0$ that
	\begin{equation} \label{eq:goodInduction} \tag{$H_{\lind}$}
		\widetilde{y}_l^{-1} \widetilde{y}_{i_r - 1}(J) \text{ dominates } J \quad \text{and} \quad \widetilde{y}_l^{-1} \widetilde{y}_{i_r - 1}(J) = y_l^{-1} y_{i_r - 1}(J).
	\end{equation}
	For the base case $l = i_r - 2$, notice that since $i_r - 1 \neq i_{r-1}$, we have $\widetilde{y}_l^{-1} \widetilde{y}_{i_r - 1} = g_{i_r - 1} = y_l^{-1} y_{i_r - 1}$, showing{\renewcommand\lind{i_r - 2} \eqref{eq:goodInduction}}.
	
	Now take $0 \leq l < i_r - 2$ and assume {\renewcommand\lind{l+1}\eqref{eq:goodInduction}}. 
	
	If $l = i_k$ for some $0 \leq k \leq r$, then {\renewcommand\lind{l+1}\eqref{eq:goodInduction}} implies that $\widetilde{y}_{l+1}^{-1} y_{i_r - 1}(J)$ is either disjoint or contains $J$, the support of $\widetilde{g}_l$. Hence 
	\begin{equation} \label{eq:induction1a}
		\widetilde{y}_{l}^{-1} y_{i_r - 1}(J) = \widetilde{g}_l \widetilde{y}_{l+1}^{-1} y_{i_r - 1}(J) = \widetilde{y}_{l+1}^{-1} y_{i_r - 1}(J),
	\end{equation} and we deduce that
	\begin{equation} \label{eq:induction1b}
		\widetilde{y}_{l}^{-1} \widetilde{y}_{i_r - 1}(J) = \widetilde{g}_l \widetilde{y}_{l+1}^{-1} \widetilde{y}_{i_r - 1}(J)  \text{ dominates } J = \widetilde{g}_l(J)
	\end{equation} since {\renewcommand\lind{l+1}\eqref{eq:goodInduction}} holds. Moreover, by condition \eqref{item:good1} the interval $y_{l+1}^{-1} y_{i_r - 1}(J)$ is either disjoint or contains $J$, the support of $g_l$, so
	\[
	y_l^{-1} y_{i_r - 1}(J) = g_l y_{l+1}^{-1} y_{i_r - 1}(J) = y_{l+1}^{-1} y_{i_r - 1}(J)
	\] too, which together with Equation \eqref{eq:induction1a} gives
	\[
	y_l^{-1} y_{i_r - 1}(J) = y_{l+1}^{-1} y_{i_r - 1}(J) = \widetilde{y}_{l+1}^{-1} y_{i_r - 1}(J) = \widetilde{y}_k^{-1} y_{i_r - 1}(J).
	\] The previous equation and Equation \eqref{eq:induction1b} give {\renewcommand\lind{l}\eqref{eq:goodInduction}}.
	
	If $l \neq i_k$ for all $0 \leq k \leq r$ instead, then
	\[
	\widetilde{y}_{l}^{-1} \widetilde{y}_{i_r - 1}(J) =  g_l \widetilde{y}_{l+1}^{-1} \widetilde{y}_{i_r - 1}(J) = g_l y_{l + 1}^{-1}y_{i_r - 1}(J) = y_l^{-1} y_{i_r - 1} (J)
	\]
	and hence
	\(
	\widetilde{y}_{l}^{-1} \widetilde{y}_{i_r - 1} (J) = y_{l}^{-1} y_{i_r - 1}(J) \text{ dominates } J.
	\) This shows \eqref{eq:goodInduction}, finishing the induction and the proof of condition \eqref{item:good2}.
	
	To show the second statement, it remains to show that the collection $q$ is satisfactory if there exists a collection in $T^{a,e_G}(q)$ satisfying condition \eqref{item:good1}. To show this, take two distinct trajectories $(y_1, \ldots, y_n)$ and $(\widetilde{y}_1,\ldots, \widetilde{y}_n)$ in $T^{a,e_G}(q)$ with jumps $g_{i_1},\ldots, g_{i_r}$ and $\widetilde{g}_{i_1},\ldots, \widetilde{g}_{i_r} \in \{a, e_G\}$ respectively at times $i_1, \ldots, i_k$. For $1 \leq r \leq k$ write $e_r = \widetilde{g}_{i_r} g_{i_r}^{-1} \in G$. Denote by $b_1,\ldots, b_k \in G$ the blocks of $q$ between the times $i_1, \ldots, i_k$, so $y_n = b_1g_{i_1}b_2 g_{i_2}\cdots b_k g_{i_k} b_{k+1}$ and $\widetilde{y}_n = b_1 \widetilde{g}_{i_1} b_2 \widetilde{g}_{i_2} \cdots b_k \widetilde{g}_{i_k} b_{k+1}$.
	
	Write
	\begin{align*}
		\widetilde{y}_n y_n^{-1} & = b_1 \widetilde{g}_{i_1} \cdots \widetilde{g}_{i_{k-1}} b_k e_k b_k^{-1} g_{i_{k-1}}^{-1} \cdots g_{i_1}^{-1} b_1^{-1} \\
		& = b_1 \widetilde{g}_{i_1} \cdots \widetilde{g}_{i_{k-2}} b_{k-1} e_{k-1} e_k^{g_{i_{k-1}} b_k} b_{k-1}^{-1} g_{i_{k-2}}^{-1} \cdots g_{i_1}^{-1} b_1^{-1} \\
		& = b_1 \widetilde{g}_{i_1} \cdots \widetilde{g}_{i_{k-2}} b_{k-1} e_{k-1}  b_{k-1}^{-1} g_{i_{k-2}}^{-1} \cdots g_{i_1}^{-1} b_1^{-1} e_k^{b_1 g_{i_1} \cdots g_{i_{k-1}} b_k}
	\end{align*} where we have used the notation $u^v = vuv^{-1}$. By iterating the previous calculation we arrive at
	\[
	\widetilde{y}_n y_n^{-1} = e_1^{b_1} e_2^{b_1 g_{i_1} b_2 }\cdots e_{k-1}^{b_1 g_{i_1} \cdots g_{i_{k-2}} b_{k-1}} e_k^{b_1 g_{i_1} \cdots g_{i_{k-1}} b_k} = e_1^{y_{i_1 - 1}} e_2^{y_{i_2 - 1}} \cdots e_{k-1}^{y_{i_{k-1} - 1}} e_k^{y_{i_k - 1}}.
	\]
	
	\begin{figure}[h!]
		\centering
		\begin{tikzpicture}[scale=1]
			\draw (0,0) circle (2.2);
			
			\draw[thick, blue] ([shift=(22.5:2)]0,0) arc (22.5:130:2) node[black, above right, pos=.5] {$y_{i_r - 1}(J)$};
			\draw[thick, red] ([shift=(40:1.8)]0,0) arc (40:90:1.8) node[black, above right, pos=.5] {};
			\draw[thick, red] ([shift=(100:1.8)]0,0) arc (100:120:1.8) node[black, above right, pos=.5] {};
			\draw[thick, red] ([shift=(50:1.6)]0,0) arc (50:60:1.6) node[black, above right, pos=.5] {};
			
			\draw[thick, red] ([shift=(150:2)]0,0) arc (150:200:2) node[black, above right, pos=.5] {};
			\draw[thick, red] ([shift=(170:1.8)]0,0) arc (170:190:1.8) node[black, above right, pos=.5] {};
		\end{tikzpicture}
		\caption{All red intervals $y_{i_{r'} - 1}(J)$ are dominated by the blue interval $y_{i_r - 1}(J)$.}
		\label{fig:dominatingIntervals}
	\end{figure}
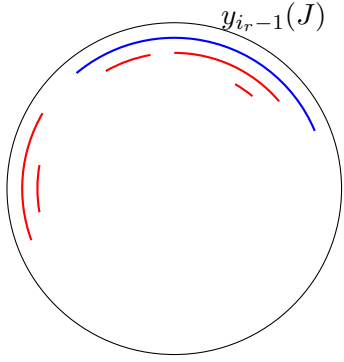

	Notice that the support of $e_r^{y_{i_r - 1}}$ is $y_{i_r - 1}(\mathrm{supp}(a))$. Choose $1 \leq r \leq k$ such that $e_r \neq 1$ and for all $1 \leq r' \leq k$ with $e_{r'} \neq 1$ either $y_{i_{r'} - 1}(J)$ is strictly contained in $y_{i_r - 1}(J)$ or is disjoint from $y_{i_r - 1}(J)$. Then $\widetilde{y}_n y_n^{-1}$ coincides in a neighborhood of $\partial y_{i_r - 1}(J)$ with $e_r^{y_{i_r - 1}}$ (see Figure \ref{fig:dominatingIntervals}). This neighborhood must intersect $y_{i_r - 1}(\mathrm{supp}(a))$ because $J$ is the smallest subinterval containing $\mathrm{supp}(a)$, and hence $\widetilde{y}_n y_n^{-1}$ is nontrivial near $\partial y_{i_r - 1}(J)$. We conclude that $q$ is satisfactory.
\end{proof}

\begin{definition} Given $\xi \in S^1$, we say that $q = (\mathbf{x}, i_1,\ldots, i_k)$ is \emph{$\xi$-good} if
	\begin{itemize}
		\item for all trajectories $(y_1,\ldots, y_n) \in T^{a,e_G}(q)$, every $ 1 \leq r \leq k$, we have
		\[
		y_{i_r - 1}(J) \text{ dominates } y_l(J) \text{ for all } 0 \leq l < i_r - 1 \quad \text{ and } \quad  y_{i_r}^{-1}(\xi) \not \in J, \text{ and}
		\]
		\item the set of indices $i_1, \ldots, i_k$ is maximal among the subsets of $\{1, \ldots, n\}$ satisfying the property above.
	\end{itemize}
\end{definition}
Equivalently, by Lemma \ref{lemma:good} the collection $q$ is $\xi$-good if there exists at least one trajectory in $T^{a,e_G}(q)$ that verifies the previous conditions.

\begin{lemma} \label{lemma:partition}
	For every $n \in \N$ and $\xi \in S^1$ the set of trajectories is partitioned as 
	\[
	G^\N = \bigsqcup_{\substack{q \text{ a $\xi$-good}\\ \text{collection of length }n }} \bigsqcup_{\mathbf{y} \in T^{a,e_G}(q)}[\mathbf{y}].
	\]
\end{lemma}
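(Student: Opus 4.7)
The plan is to define a canonical map $\mathbf{w} \mapsto (q(\mathbf{w}), \mathbf{y}_\mathbf{w})$ sending each trajectory $\mathbf{w} \in G^\N$ to a $\xi$-good collection $q(\mathbf{w})$ of length $n$ together with a distinguished $\mathbf{y}_\mathbf{w} \in T^{a,e_G}(q(\mathbf{w}))$ containing $\mathbf{w}$ in its cylinder, and then show this map realizes the claimed partition. Given $\mathbf{w} = (w_j)_{j \geq 0}$, I define the set of good times
\[
I(\mathbf{w}) := \left\{ i \in \{1, \ldots, n\} : w_{i-1}^{-1}w_i \in \{a, e_G\},\ w_{i-1}(J) \text{ dominates } w_l(J) \text{ for all } 0 \leq l < i-1,\ w_{i-1}^{-1}(\xi) \notin J \right\}.
\]
Writing $I(\mathbf{w}) = \{i_1 < \cdots < i_k\}$ and letting $\mathbf{x}$ record the jumps $w_{l-1}^{-1}w_l$ for $l \in \{1, \ldots, n\} \setminus I(\mathbf{w})$, I set $q(\mathbf{w}) := (\mathbf{x}, i_1, \ldots, i_k)$ and $\mathbf{y}_\mathbf{w} := (w_1, \ldots, w_n)$. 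Because the jumps of $\mathbf{w}$ at times in $I(\mathbf{w})$ already lie in $\{a, e_G\}$ by construction, $\mathbf{y}_\mathbf{w} \in T^{a,e_G}(q(\mathbf{w}))$ and hence $\mathbf{w} \in [\mathbf{y}_\mathbf{w}]$.

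The next step is to verify that $q(\mathbf{w})$ is $\xi$-good. The domination and non-$J$ conditions at each $i_r$ hold for the particular trajectory $\mathbf{y}_\mathbf{w}$ by the very definition of $I(\mathbf{w})$. Lemma \ref{lemma:good} then propagates the domination condition to every other $\mathbf{y} \in T^{a,e_G}(q(\mathbf{w}))$ and supplies the key equality $y_{i_r - 1}(J) = w_{i_r - 1}(J)$; combined with the fact that both $a$ and $e_G$ fix $S^1 \setminus J$ pointwise, this gives $y_{i_r}^{-1}(\xi) = y_{i_r-1}^{-1}(\xi) \notin J$ for every such $\mathbf{y}$. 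For the maximality clause, suppose some $i^* \notin I(\mathbf{w})$ could be adjoined to the good times to produce a collection $q^*$ still satisfying the $\xi$-good conditions and compatible with $\mathbf{w}|_{[1,n]}$ in the sense that $\mathbf{w}|_{[1,n]}$ remains a trajectory of $T^{a,e_G}(q^*)$. Compatibility forces $w_{i^*-1}^{-1}w_{i^*} \in \{a, e_G\}$, while testing the new conditions at $i^*$ against $\mathbf{w}|_{[1,n]}$ itself yields both that $w_{i^*-1}(J)$ dominates $w_l(J)$ for $l < i^*-1$ and that $w_{i^*-1}^{-1}(\xi) \notin J$. Together these are precisely the three conditions defining $I(\mathbf{w})$, contradicting $i^* \notin I(\mathbf{w})$.

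Finally, uniqueness (and hence disjointness) follows from the same argument run in reverse. If $\mathbf{w} \in [\mathbf{y}]$ for some $\mathbf{y} \in T^{a,e_G}(q)$ with $q = (\mathbf{x}_q, j_1, \ldots, j_m)$ $\xi$-good, then $\mathbf{y} = \mathbf{w}|_{[1,n]}$ is forced by the cylinder definition, so the jumps of $\mathbf{w}$ at $j_1, \ldots, j_m$ lie in $\{a, e_G\}$ and applying the $\xi$-good conditions to $\mathbf{w}|_{[1,n]}$ shows each $j_s$ belongs to $I(\mathbf{w})$. Were the inclusion $\{j_1, \ldots, j_m\} \subseteq I(\mathbf{w})$ strict, any $i^* \in I(\mathbf{w}) \setminus \{j_1, \ldots, j_m\}$ could be used to extend $q$ to a strictly larger collection that still satisfies the $\xi$-good conditions (Lemma \ref{lemma:good} transports the conditions from $\mathbf{w}|_{[1,n]}$ to every trajectory in the extension), contradicting the maximality of $q$. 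Hence $q = q(\mathbf{w})$, proving uniqueness. I expect the main obstacle to be the maximality verification: the $\xi$-good conditions are universally quantified over all trajectories in $T^{a,e_G}(q)$, so one must ensure that an extension $q^*$ containing $\mathbf{w}|_{[1,n]}$ does not silently introduce other trajectories violating the domination condition, which is exactly where Lemma \ref{lemma:good} is needed through its second conclusion that $y_{i_r-1}(J)$ is the same for every $\mathbf{y} \in T^{a,e_G}(q)$.
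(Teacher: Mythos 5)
Your proof is correct and follows the same approach as the paper's (one-sentence) argument: assign to each trajectory $\mathbf{w}$ the canonical collection whose distinguished times are exactly those satisfying the domination and $\xi$-conditions along $\mathbf{w}$, and observe that this choice is forced. You are in fact more careful than the paper in one respect: you explicitly require $w_{i-1}^{-1}w_i \in \{a,e_G\}$ at the distinguished times, a condition the paper's proof leaves implicit but which is needed for $\mathbf{w}$ to lie in $\bigcup_{\mathbf{y}\in T^{a,e_G}(q)}[\mathbf{y}]$, and your appeal to Lemma \ref{lemma:good} to transfer the domination and $\xi$-conditions to every trajectory in $T^{a,e_G}(q)$ correctly supplies the verification that the resulting collection is $\xi$-good and unique.
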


\begin{proof}
	Given any trajectory $\mathbf{w} = (w_n)_{n \geq 0}$ there is exactly one $\xi$-good collection $q$ such that $\mathbf{w}$ is contained in $\bigcup_{\mathbf{y} \in T^{a,e_G}(q)}[\mathbf{y}]$, which is defined by setting $q = (\mathbf{x}, i_1, \ldots, i_k)$ where the $i_r$ are exactly the indices such that 
	\begin{itemize}
		\item for all $0 \leq l < i_r - 1$ the interval $w_{i_r - 1}(J)$ dominates $w_l(J)$ and
		\item $w_{i_r}^{-1}(\xi) \not \in J$,
	\end{itemize} and $\mathbf{x}$ is obtained from $\mathbf{w}$ by keeping the coordinates in $\{1,\ldots, n\} \setminus \{i_1,\ldots, i_k\}$.
\end{proof}

Note that whenever $A_{n,\xi}$ is composed of $\xi$-good collections, Lemma \ref{lemma:good} implies that conditions \eqref{item:rentropy3} and \eqref{item:rentropy2} in Theorem \ref{thm:relEntropy} are immediately satisfied. Moreover, the previous lemma ensures that condition \eqref{item:rentropy4} in the theorem is also verified. We record this as a proposition, which we will use to verify some of the hypotheses of Theorem \ref{thm:relEntropy}.

\begin{proposition} \label{proposition:rconditions}
	Let $\mu$ be a nondegenerate probability measure with finite entropy on a countable subgroup $G$ of $\mathrm{Homeo}_+(S^1)$ acting minimally and proximally on $S^1$. Consider the $\mu$-boundary $(S^1,\nu)$ of $G$. Let $a \in G \setminus \{e_G\}$ be an element such that both $\mathrm{supp}(a)$ and $S^1\setminus \mathrm{supp}(a)$ have nonempty interior, and let $J \subsetneqq S^1$ be the smallest closed interval containing $\mathrm{supp}(a)$. Let $p> 0$ and consider for each $n \in \N_+$ a subset $\Xi_n \subseteq S^1$ with $\nu(\Xi_n) \geq p$, and for $\nu$-almost every $\xi \in \Xi_n$ a set $A_{n,\xi}$ of collections of length $n$.  If for every $n \in \N$ and $\xi \in \Xi_n$ the set $A_{n,\xi}$ is composed of $\xi$-good collections, then Conditions \eqref{item:rentropy3}, \eqref{item:rentropy4} and \eqref{item:rentropy2} in Theorem \ref{thm:relEntropy} are satisfied.
\end{proposition}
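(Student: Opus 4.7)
The plan is to verify each of the three conditions separately using the machinery developed in Lemma \ref{lemma:good} and Lemma \ref{lemma:partition}, since the hypothesis that every $A_{n,\xi}$ consists of $\xi$-good collections plugs directly into both results. Fix $n \in \N_+$ and $\xi \in \Xi_n$, and throughout let $q = (\mathbf{x}, i_1, \ldots, i_k) \in A_{n,\xi}$ be an arbitrary $\xi$-good collection.

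For Condition \eqref{item:rentropy3}, note that by the definition of $\xi$-good, every trajectory $(y_1, \ldots, y_n) \in T^{a,e_G}(q)$ satisfies condition \eqref{item:good1} of Lemma \ref{lemma:good}, namely that $y_{i_r-1}(J)$ dominates $y_l(J)$ for all $0 \leq l < i_r - 1$ and every $1 \leq r \leq k$. The final assertion of Lemma \ref{lemma:good} then immediately yields that $q$ is satisfactory.

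For Condition \eqref{item:rentropy4}, this is a direct consequence of Lemma \ref{lemma:partition}: the sets $\bigcup_{\mathbf{y} \in T^{a,e_G}(q)} [\mathbf{y}]$ indexed by distinct $\xi$-good collections $q$ of length $n$ form a partition of $G^\N$. In particular, whenever $q_1, q_2 \in A_{n,\xi}$ are distinct, any cylinder arising from $T^{a,e_G}(q_1)$ is disjoint from any cylinder arising from $T^{a,e_G}(q_2)$.

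For Condition \eqref{item:rentropy2}, the definition of $\xi$-good gives $y_{i_r}^{-1}(\xi) \notin J$ for every $(y_1, \ldots, y_n) \in T^{a,e_G}(q)$ and every $1 \leq r \leq k$; to move this to the index $i_r - 1$, I will use that $\mathrm{supp}(a) \subseteq J$ and hence both $a$ and $e_G$ fix pointwise the complement of $J$. Writing $y_{i_r} = y_{i_r - 1} g_{i_r}$ with $g_{i_r} \in \{a, e_G\}$, we have $y_{i_r - 1}^{-1}(\xi) = g_{i_r}\bigl(y_{i_r}^{-1}(\xi)\bigr)$, and since $y_{i_r}^{-1}(\xi) \notin J$ we obtain $y_{i_r-1}^{-1}(\xi) = y_{i_r}^{-1}(\xi) \notin J$, as desired. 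There is no substantive obstacle here; the entire content is already packaged in Lemma \ref{lemma:good} and Lemma \ref{lemma:partition}, and the only verification required is the trivial index shift from $i_r$ to $i_r - 1$ in the last condition.
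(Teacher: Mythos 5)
Your proof is correct and takes essentially the same route as the paper, which likewise derives Conditions \eqref{item:rentropy3} and \eqref{item:rentropy2} from Lemma \ref{lemma:good} and Condition \eqref{item:rentropy4} from Lemma \ref{lemma:partition}. Your explicit handling of the index shift from $y_{i_r}^{-1}(\xi)\notin J$ (in the definition of $\xi$-good) to $y_{i_r-1}^{-1}(\xi)\notin J$ (in Condition \eqref{item:rentropy2}), using that $g_{i_r}\in\{a,e_G\}$ fixes the complement of $J$, is a detail the paper treats as immediate, and you verify it correctly.
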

\section{Exponential contraction in mean} \label{sec:mean}

The purpose of this section is to prove Theorem \ref{cor:meanBoundary} and Proposition \ref{prop:equidistributionXi} below, which are the remaining statements on random walks on $\mathrm{Homeo}_+(S^1)$ that we need to prove Theorem \ref{thm:entropy}. 

The following theorem has already appeared in the literature in several guises, see \cite[Proposition 4.15]{Aoun2011} for probability measures satisfying an exponential moment condition on linear groups acting on projective spaces and \cite[Theorem 1.3]{GelfertSalcedo2023} or \cite[Proposition 4.18]{GorodetskiKleptsyn2021} for finitely supported measures on $\mathrm{Diff}^1_+(S^1)$. The most general version follows from the recent work of I. Choi \cite{Choi2025}, and does not require any assumption on the smoothness of the elements of $G$ nor on the tail decay of the probability measure $\mu$.

\begin{thm} [\cite{Choi2025}] \label{thm:meanXi}
	
	Let $\mu$ be a nondegenerate probability measure on a countable subgroup of $\mathrm{Homeo}_+(S^1)$ acting minimally and proximally on $S^1$. Then there exists $\lambda > 0$ and $N \in \N$ such that for all $n\ge N$ we have
	\begin{equation*} \label{eq:meanXi}
		\sup_{x, y \in S^1} \E\left[d(w_n^{-1}(x), w_n^{-1}(y))\right] \leq e^{-\lambda n}.
	\end{equation*}
\end{thm}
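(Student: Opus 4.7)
The plan is to reduce the claim to a uniform one-step contraction estimate and then iterate it via the Markov property. Set
\[
\phi_n(x, y) \coloneqq \E\bigl[d(w_n^{-1}(x), w_n^{-1}(y))\bigr].
\]
Writing $w_{n+k}^{-1} = (g_{n+1} \cdots g_{n+k})^{-1} \circ w_n^{-1}$, the left factor is independent of $\mathcal{F}_n = \sigma(g_1, \ldots, g_n)$ and has the same distribution as $w_k^{-1}$. Conditioning on $\mathcal{F}_n$ and using this independence yields the tensorization
\[
\phi_{n+k}(x, y) = \E\bigl[\phi_k(w_n^{-1}(x), w_n^{-1}(y))\bigr].
\]
Hence, if one finds $N_0 \in \N$ and $\rho \in (0, 1)$ such that $\phi_{N_0}(a, b) \leq \rho \cdot d(a, b)$ for all $a, b \in S^1$, iterating gives $\phi_{n + N_0}(x, y) \leq \rho \cdot \phi_n(x, y)$ and therefore $\sup_{x, y} \phi_{j N_0}(x, y) \leq \rho^{\,j} \cdot \mathrm{diam}(S^1)$. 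Interpolating for non-multiples of $N_0$ yields the desired exponential decay with $\lambda = -\tfrac{1}{N_0} \log \rho$ (after adjusting constants). So the entire task reduces to producing such a uniform one-step contraction.

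For the one-step contraction at time $N_0$, I would split according to the size of $d(a, b)$. For macroscopic pairs with $d(a, b) \geq \delta$, Theorem \ref{teo:dkn}\,\eqref{it:dkn2} applied to the $\mu$-walk ensures that $\P$-a.s., for any closed arc $K$ with $\xi(\mathbf{w}) \notin K$, one has $\mathrm{diam}(w_n^{-1}(K)) \to 0$; since $\nu$ is atomless by Theorem \ref{teo:dkn}\,\eqref{it:dkn1}, one has $\xi(\mathbf{w}) \notin \{a, b\}$ a.s., and one of the two arcs of $S^1 \setminus \{a, b\}$ does not contain $\xi(\mathbf{w})$. Dominated convergence gives $\phi_n(a, b) \to 0$ pointwise, and a continuity/compactness argument upgrades this to uniform convergence on $\{(a, b) \in S^1 \times S^1 : d(a, b) \geq \delta\}$. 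Taking $N_0$ large enough so that $\sup_{d(a,b)\geq \delta} \phi_{N_0}(a, b) < \rho \delta$ gives the desired inequality on the macroscopic regime.

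The delicate regime is $d(a, b) < \delta$, where the bound $\phi_{N_0}(a,b) \leq \rho d(a, b)$ requires understanding the contraction of arbitrarily small arcs. Here I would use proximality together with nondegeneracy of $\mu$: for any $\epsilon > 0$, proximality produces a group element $g$ with $\mathrm{diam}(g^{-1}([a, b])) < \epsilon$, and nondegeneracy places such a $g$ in the support of some convolution power $\mu^{*m}$. A compactness/continuity argument in the space of small arcs (using the action of the rotation group to reduce to a one-parameter family) should yield a uniform choice of $m = N_0$ and a uniform lower bound $p_0 > 0$ such that, with probability at least $p_0$ uniformly in $(a, b)$, the inverse walk $w_{N_0}^{-1}$ contracts the arc $[a, b]$ by a factor less than $\epsilon$; combined with the deterministic bound $d(w_{N_0}^{-1}(a), w_{N_0}^{-1}(b)) \leq \mathrm{diam}(S^1)$ on the complementary event, this gives $\phi_{N_0}(a, b) \leq (p_0 \epsilon + (1-p_0) \mathrm{diam}(S^1)/d(a,b)) \cdot d(a, b)$, which can be made $\leq \rho d(a, b)$ provided $\delta$ and $\epsilon$ are chosen appropriately.

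The main obstacle is precisely this microscopic step: extracting uniform quantitative contraction of arbitrarily small arcs without any smoothness or moment assumption. For smooth actions (as in \cite{GorodetskiKleptsyn2021, GelfertSalcedo2023}) one could use positivity of the Lyapunov exponent, and for linear groups (as in \cite{Aoun2011}) one uses exponential moments and hyperbolic dynamics on projective spaces, but in the purely topological setting of $\mathrm{Homeo}_+(S^1)$ one must rely on a finer analysis of the random walk on pairs of points, which is what Choi \cite{Choi2025} accomplishes.
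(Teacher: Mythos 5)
Your reduction to a uniform one-step contraction is correctly set up: the tensorization identity $\phi_{n+k}(x,y)=\E\left[\phi_k(w_n^{-1}(x),w_n^{-1}(y))\right]$ is valid, and the macroscopic regime is fine (in fact $\sup_{x,y}\phi_n(x,y)\to 0$ follows directly by chaining over a finite net, since $\phi_n(a,b)\le\phi_n(a,c)+\phi_n(c,b)$ for $c\in[a,b]$, so no Dini-type upgrade is needed). The gap is the microscopic step, which you correctly identify as the crux but do not establish, and the estimate you propose for it is quantitatively wrong: your bound amounts to $\phi_{N_0}(a,b)\le p_0\epsilon+(1-p_0)\,\mathrm{diam}(S^1)$, and the additive constant $(1-p_0)\,\mathrm{diam}(S^1)$ coming from the bad event cannot be absorbed into $\rho\, d(a,b)$ once $d(a,b)$ is smaller than that constant --- no choice of $\delta$ and $\epsilon$ repairs this unless $p_0=1$. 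To run the contraction scheme one would need to show that on the bad event the arc $[a,b]$ is expanded by at most a bounded factor \emph{relative to $d(a,b)$}, i.e.\ a distortion estimate; this is precisely the control that smoothness (Lyapunov exponents) or linearity (projective cocycles) supplies and that a general element of $\mathrm{Homeo}_+(S^1)$ does not. Your closing paragraph concedes that this step is the content of Choi's work, so the proposal reduces the theorem to an unproved lemma of essentially the same difficulty.

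For comparison, the paper does not attempt a dynamical argument at all: since the statement is attributed to \cite{Choi2025}, the proof quotes Choi's Theorem C, which gives the uniform probability estimate $\P\left[d(w_n^{-1}(x),w_n^{-1}(y))\le e^{-\lambda n}\right]\ge 1-e^{-\lambda n}/\lambda$, and converts it into the expectation bound by using the trivial bound on the complementary event, whose probability is itself exponentially small, yielding $\E\left[d(w_n^{-1}(x),w_n^{-1}(y))\right]\le (1+1/\lambda)e^{-\lambda n}$. If you intend to cite Choi rather than reprove the result, you should invoke that probability estimate directly; if you intend a self-contained proof, the microscopic contraction gap is fatal as written.
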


\begin{proof}
	It follows from \cite[Theorem C]{Choi2025} that there exists $\lambda > 1$ such that for all $x,y \in S^1$ and $n \in \N_+$ we have
	\[
	\P\left[ d(w_n^{-1}(x), w_n^{-1}(y)) \leq e^{-\lambda n} \right] \geq 1 - e^{-\lambda n}/\lambda.
	\]
	From this, we obtain
	\[
	\E \left[d(w_n^{-1}(x), w_n^{-1}(y))\right] \leq (1 + 1/\lambda) e^{-\lambda n},
	\]
	which implies the desired inequality.
\end{proof}

The proof of the following corollary uses Theorem \ref{thm:meanXi} and follows steps similar to the proof of \cite[Theorem 4.16]{Aoun2011}.

\begin{corollary}[Exponential convergence in mean to the boundary point] \label{cor:meanBoundary}
	Let $\mu$ be a nondegenerate probability measure on a countable subgroup of $\mathrm{Homeo}_+(S^1)$ acting minimally and proximally on $S^1$. Denote by $\xi:G^{\N}\to S^1$ the boundary map. Then there exist $\lambda>0$ and $N\in \N$ such that for all $n\ge N$ we have
	\[
	\sup_{x \in S^1} \E[d(w_n (x), \xi(\mathbf{w}))] \leq e^{-\lambda n }.
	\] 
\end{corollary}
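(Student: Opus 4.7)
The plan is to reduce the convergence of $w_n(x)$ to $\xi(\mathbf{w})$ to the contraction estimate of Theorem \ref{thm:meanXi}, using the key identity that $\xi(\mathbf{w}) = w_n(\xi_n)$ almost surely, where $\xi_n$ is a random point in $S^1$ distributed as $\nu$ and independent of $w_n$. To establish this identity, I would decompose $w_{n+k} = w_n W_k$, where $W_k \coloneqq g_{n+1} g_{n+2} \cdots g_{n+k}$, so that $(W_k)_{k \geq 0}$ is a $\mu$-random walk independent of $w_n$. By Theorem \ref{teo:dkn} applied to this shifted trajectory, the associated boundary point $\xi_n$ exists $\P$-almost surely, is distributed as $\nu$, and is independent of $w_n$. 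Since $w_n \in \mathrm{Homeo}_+(S^1)$ is continuous and $(W_k)_\ast \nu \to \delta_{\xi_n}$ weakly, pushing forward by $w_n$ yields $(w_{n+k})_\ast \nu = (w_n)_\ast (W_k)_\ast \nu \to \delta_{w_n(\xi_n)}$, which forces $\xi(\mathbf{w}) = w_n(\xi_n)$.

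Given this identity, independence together with Fubini's theorem gives
\[
\E[d(w_n(x), \xi(\mathbf{w}))] = \E[d(w_n(x), w_n(\xi_n))] = \int_{S^1} \E[d(w_n(x), w_n(z))]\, d\nu(z),
\]
so it suffices to prove that $\sup_{x, y \in S^1} \E[d(w_n(x), w_n(y))] \leq e^{-\lambda n}$ for large $n$. To obtain this bound, I would apply Theorem \ref{thm:meanXi} to the reflected probability measure $\check{\mu}(g) \coloneqq \mu(g^{-1})$. The measure $\check{\mu}$ is still nondegenerate on the same countable subgroup: for any $g \in G$, a factorisation $g = s_1 \cdots s_k$ with $s_i \in \mathrm{supp}(\mu)$ yields $g^{-1} = s_k^{-1} \cdots s_1^{-1}$ in the semigroup generated by $\mathrm{supp}(\check\mu)$, which is therefore all of $G$. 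Theorem \ref{thm:meanXi} then gives $\sup_{x,y} \E[d(\check{w}_n^{-1}(x), \check{w}_n^{-1}(y))] \leq e^{-\lambda n}$ for large $n$, and since $\check{w}_n^{-1}$ is a product of $n$ i.i.d.\ $\mu$-distributed elements, it has the same distribution as $w_n$, giving exactly the required bound.

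The main subtlety lies in justifying the identity $\xi(\mathbf{w}) = w_n(\xi_n)$: the convergence $(w_k)_\ast \nu \to \delta_{\xi(\mathbf{w})}$ holds only weakly, and the argument relies on the continuity of $w_n$ as a homeomorphism to transfer this weak convergence. Once this identity is in place, the rest of the proof is a straightforward application of Fubini and of Theorem \ref{thm:meanXi} to the reflected walk.
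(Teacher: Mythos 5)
Your proof is correct and follows essentially the same route as the paper: both arguments rest on applying Theorem \ref{thm:meanXi} to the reflected measure (so that $\check{w}_n^{-1}$ has the law of $w_n$) and on the independence of $w_n$ from the future increments. The only real difference is in packaging: you isolate the exact identity $\xi(\mathbf{w}) = w_n(\xi_n)$ with $\xi_n \sim \nu$ independent of $w_n$ (correctly justified by pushing the weak-$\ast$ convergence $(W_k)_\ast\nu \to \delta_{\xi_n}$ forward through the continuous map $w_n$), whereas the paper reaches the same estimate via a triangle inequality through $w_k(y)$ followed by letting $k \to \infty$ with dominated convergence.
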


\begin{proof}
	Let $n,k \in \N_+$ with $n<k$ and let $x,y \in S^1$. We have that
	\[
	\E\left[d\left(w_n(x), \xi(\mathbf{w})\right)\right] \leq \E\left[d\left(w_n(x), w_k(y)\right)\right] + \E\left[d\left((w_k(y), \xi(\mathbf{w})\right)\right].
	\]
	Define a probability measure $\overline{\mu}$ on $G$ as $\overline{\mu}(g) = \mu(g^{-1})$ for $g \in G$. Theorem \ref{thm:meanXi} applied to the random walk driven by $\overline{\mu}$ gives a $\lambda > 0$ such that
	\[
	\sup_{u,v \in S^1} \E\left[ d\left( g_n  \cdots  g_0(u), g_n  \cdots  g_0(v)\right) \right] \leq e^{-\lambda n}
	\] for all large enough $n \in \N_+$. In particular, we deduce that
	\begin{align*}
		\E\left[d\left(w_n(x), w_k(y)\right)\right] & = \sum_{\gamma \in G} \E\left[d\left(w_n(x), w_n  \gamma (y)\right)\right] \mu^{\ast(k-n)}(\gamma) \\
		& \leq \sup_{u,v \in S^1}\E\left[d\left(w_n(u), w_n(v)\right)\right] \\
		& = \sup_{u,v \in S^1} \E\left[ d\left( g_n  \cdots  g_0(u), g_n  \cdots  g_0(v)\right) \right] \le e^{-\lambda n},
	\end{align*} 
	
	and hence conclude that
	\begin{equation*} \label{eq:convExp}
		\sup_{x \in S^1} \E\left[d\left(w_n(x), \xi(\mathbf{w})\right)\right] \leq e^{-\lambda n} + \E\left[d\left(w_k(y), \xi(\mathbf{w})\right)\right].
	\end{equation*} By integrating the above inequality with respect to $\nu$ we conclude that
	\begin{align*}
		\sup_{x \in S^1} \E\left[d\left(w_n(x), \xi(\mathbf{w})\right)\right] &\leq e^{-\lambda n} + \E\left[\int_{S^1}d\left(w_k(y), \xi(\mathbf{w})\right) \dd \nu(y)\right] \\
		& = e^{-\lambda n} + \E\left[\int_{S^1}d(y, \xi(\mathbf{w})) \dd w_k \nu(y)\right].
	\end{align*} But the dominated convergence theorem and Theorem \ref{teo:dkn}, \eqref{it:dkn2} imply that
	\[
	\E\left[\int_{S^1}d(y, \xi(\mathbf{w})) \dd w_k \nu(y)\right] \xrightarrow[k \to \infty]{} \E\left[ \int_{S^1}d(y,\xi(\mathbf{w})) \dd \delta_{\xi(\mathbf{w})}(y)\right] = 0,
	\] so the desired conclusion holds.
\end{proof}

For a probability measure $\mu$ on $G$, we denote by $\overline{\mu}$ the \emph{reflected} probability measure on $G$, defined by $\overline{\mu}(g) = \mu(g^{-1})$ for each $g \in G$.

\begin{proposition} \label{prop:equidistributionXi}
	Let $\mu$ be a nondegenerate probability measure on a countable subgroup of $\mathrm{Homeo}_+(S^1)$ acting minimally and proximally on $S^1$. Denote by $\xi:G^{\N}\to S^1$ the boundary map. Consider the reflected probability measure $\overline{\mu}$ on $G$, and let us denote by $\overline{\nu}$ the unique $\overline{\mu}$-stationary probability measure on $S^1$. Then for any nonempty interval $J \subseteq S^1$ there exists $N \in \N_+$ such that for all $n \geq N$, 
	\[
	\E \left[ \abs{ \left\{ 1 \leq k \leq n :  \xi(\mathbf{w}) \in w_k(J) \right\} }\right] \leq 2 \overline{\nu}(J) n.
	\]
\end{proposition}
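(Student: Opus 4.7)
By linearity of expectation,
\[
\E\!\left[\abs{\{1 \leq k \leq n : \xi(\mathbf{w}) \in w_k(J)\}}\right] = \sum_{k=1}^n \P\!\left[\xi(\mathbf{w}) \in w_k(J)\right] = \sum_{k=1}^n \P\!\left[w_k^{-1}\xi(\mathbf{w}) \in J\right].
\]
It thus suffices to prove that each term satisfies $\P[w_k^{-1}\xi(\mathbf{w}) \in J] \leq \overline{\nu}(J) + \varepsilon_k$ for some summable sequence $\varepsilon_k$: then the total is bounded by $n\overline{\nu}(J) + C$ for a constant $C$, which is at most $2n\overline{\nu}(J)$ once $n \geq N = \lceil C/\overline{\nu}(J)\rceil$.

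The key observation is that $w_k^{-1} = g_k^{-1}\cdots g_1^{-1}$ is a product of i.i.d.\ group elements of law $\overline{\mu}$, so as a random element of $G$ it has the same distribution as a right $\overline{\mu}$-random walk of length $k$. Applying Theorem \ref{thm:meanXi} and Corollary \ref{cor:meanBoundary} to $\overline{\mu}$, we obtain that for any fixed reference point $x_0 \in S^1$, the image $w_k^{-1}(x_0)$ is close in $L^1$ to a random point of law $\overline{\nu}$, with error decaying as $e^{-\lambda k}$. In particular, $\P[w_k^{-1}(x_0) \in J] \leq \overline{\nu}(J) + O(e^{-\lambda k})$.

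The main obstacle is that $\xi(\mathbf{w})$ is not a fixed reference point but a random point correlated with $w_k$ through the common initial increments $g_1,\ldots,g_k$, so one cannot directly substitute $\xi(\mathbf{w})$ for $x_0$. To decouple them, I would approximate $\xi(\mathbf{w})$ by $w_{k+m}(x_0)$ for a large shift $m$, controlling the approximation error by $e^{-\lambda m}$ via Corollary \ref{cor:meanBoundary} applied to $\mu$. After this substitution, $w_k^{-1}w_{k+m}(x_0) = (g_{k+1}\cdots g_{k+m})(x_0)$ depends only on the increments of the walk after time $k$, so that the contraction analysis of $w_k^{-1}$ applies without correlation issues. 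Balancing the parameters $k$ and $m$ (both tending to infinity at suitably chosen rates) yields a summable error $\varepsilon_k$ and hence the claimed bound, the factor of $2$ in the conclusion absorbing the compound exponential corrections.
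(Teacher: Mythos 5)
Your reduction to bounding each term $\P[\xi(\mathbf{w})\in w_k(J)] = \P[w_k^{-1}\xi(\mathbf{w})\in J]$ is the same first step as the paper's, but the decoupling you propose does not prove the stated bound. The whole point of the proposition is that the relevant measure is the \emph{reflected} stationary measure $\overline{\nu}$, and $\overline{\nu}$ can only enter by letting $w_k^{-1}=g_k^{-1}\cdots g_1^{-1}$ act on something \emph{independent} of $g_1,\ldots,g_k$. Your substitution $\xi(\mathbf{w})\approx w_{k+m}(x_0)$ does achieve independence, but at the cost of cancelling $w_k^{-1}$ entirely: $w_k^{-1}w_{k+m}(x_0)=g_{k+1}\cdots g_{k+m}(x_0)$ is a length-$m$ \emph{forward} $\mu$-walk applied to $x_0$, whose law converges to $\nu$, not $\overline{\nu}$, as $m\to\infty$. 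The ``contraction analysis of $w_k^{-1}$'' you invoke after the substitution therefore has nothing left to act on, and carrying out your plan yields a bound of the form $\nu(J)\,n\,(1+o(1))$ rather than $2\overline{\nu}(J)\,n$. Indeed, no asymptotics are needed to see where your route lands: conditioning on $g_1,\ldots,g_k$, one has $\xi(\mathbf{w})=w_k(\eta_k)$, where $\eta_k$ is the boundary point of the walk with increments $g_{k+1},g_{k+2},\ldots$, which is independent of $w_k$ and distributed as $\nu$; hence $\P[\xi(\mathbf{w})\in w_k(J)]=\P[\eta_k\in J]=\nu(J)$ exactly. So your approach cannot produce the constant $\overline{\nu}(J)$ appearing in the statement.

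The paper's proof proceeds differently: it conditions on the \emph{endpoint} $w_n=g$ and rewrites the probability as $\sum_{g}\nu(g(J))\,\mu^{\ast n}(g)=\E\left[\nu(g_n\cdots g_1(J))\right]$; since $g_n\cdots g_1$ is $\overline{\mu}^{\ast n}$-distributed and contracts $J$ whenever the reflected boundary point $\overline{\xi}(\mathbf{w})$ lies outside $J$, this expression is controlled by $\overline{\nu}(J)$. (You should be aware of the tension between that identity and the exact computation $\P[\xi(\mathbf{w})\in w_k(J)]=\nu(J)$ above: the paper's step $\P[\xi(\sigma^n\mathbf{w})\in g(J)\mid w_n=g]=\P[\xi(\mathbf{w})\in g(J)]$ treats the conditional law of $\xi(\sigma^n\mathbf{w})=\xi(\mathbf{w})$ given $w_n=g$ as $\nu$, whereas it is $g_\ast\nu$; this point deserves a careful check before adopting that route.) A second, more minor issue with your write-up: $L^1$-closeness of two random points does not yield $\P[A\in J]\le\P[B\in J]+O(e^{-\lambda k})$ without a modulus of continuity for the stationary measure near $\partial J$; since only nonatomicity is available, the error one gets is $o(1)$ rather than exponential. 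That weaker error would still suffice for the final averaging, but not for the summability you claim.
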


\begin{proof}
	Notice that $\overline{\nu}(J) > 0$ because $J$ is nonempty and $\overline{\nu}$ has full support. Since 
	\[
	\E\left[\, \abs{1 \leq k \leq n :  \xi(\mathbf{w}) \in w_k(J) }\,\right] = \sum_{k = 0}^n \P\left[ \xi(\mathbf{w}) \in w_k(J)\right],
	\] it suffices to prove that $\P\left[ \xi \in w_n(J))\right]< 3 \overline{\nu}(J)/2$ for all large enough $n \in \N_+$.  Recall that we write $\sigma \colon G^\N \to G^\N$ for the shift map on the space of sample paths. Since the boundary map $\xi \colon (G^\N, \P) \to (S^1, \nu)$ is $\sigma$-invariant, we have
	\begin{align} \label{eq:overlineNu1} 
		\nonumber \P\left[ \xi(\mathbf{w}) \in w_n(J)\right] =  \P\left[\xi(\sigma^n\mathbf{w}) \in w_n(J)\right] &= \sum_{g \in G} \P\left[\xi(\sigma^n\mathbf{w}) \in w_n(J) \mid w_n = g\right]\mu^{\ast n}(g) \\
		\nonumber & = \sum_{g \in G} \P\left[ \xi(\sigma^n\mathbf{w}) \in g(J) \mid w_n = g\right]\mu^{\ast n}(g) \\
		\nonumber & = \sum_{g \in G} \P\left[ \xi(\mathbf{w}) \in g(J) \right]\mu^{\ast n}(g) \\
		& = \sum_{g \in G} \nu(g(J)) \mu^{\ast n}(g).
	\end{align}
	If $\mu$ were to be symmetric, then $\nu$ would also be $\overline{\mu}$-stationary and we would conclude that $ \P\left[ \xi \in w_n(J)\right] = \nu(J)$, which implies the desired inequality. In the general case, when $\mu$ may not be symmetric, we proceed as follows. For every $\mathbf{w} =(w_n)_{n \geq 0} \in G^\N$ denote by $\overline{\xi}(\mathbf{w}) \in S^1$ the boundary point for the random walk $\{g_1^{-1}\cdots g_n^{-1}\}_{n \geq 0}$, so that $\P$-almost surely
	\[
	(g_1^{-1} \cdots g_n^{-1}) \overline{\nu} \xrightarrow[n \to \infty]{}\delta_{\overline{\xi}(\mathbf{w})}
	\]in the weak-$\ast$ topology. 
	
	Since $\overline{\nu}$ has support equal to $S^1$, whenever $\overline{\xi}(\mathbf{w}) \not \in J$ we have $\mathrm{diam}(g_n \cdots g_1(J)) \xrightarrow[n \to \infty]{} 0$. From this together with the fact that $\nu$ is nonatomic, we obtain
	\begin{equation} \label{eq:nuZero}
		\nu(g_n \cdots g_1(J)) \xrightarrow[n \to \infty]{} 0.
	\end{equation} Next, we have that
	\begin{align} \label{eq:overlineNu2} 
		\nonumber \sum_{g \in G} \nu(g(J)) \mu^{\ast n}(g) & = \int_{G^\N} \nu(g_n \cdots g_1(J)) \dd \P(\mathbf{w}) \\
		\nonumber & \leq  \P\left[ \overline{\xi}(\mathbf{w}) \in J \right] + \int_{\overline{\xi} \not \in J} \nu(g_n \cdots g_1(J)) \dd \P(\mathbf{w}) \\
		& = \overline{\nu}(J) + \int_{\overline{\xi} \not \in J} \nu(g_n \cdots g_1(J)) \dd \P(\mathbf{w}).
	\end{align} The convergence of Equation \eqref{eq:nuZero} together with the dominated convergence theorem show that \[
	\int_{\overline{\xi} \not \in J} \nu(g_n \cdots g_1(J)) \dd \P(\mathbf{w})\xrightarrow[n \to \infty]{} 0,
	\] so the right side of Equation \eqref{eq:overlineNu2} is at most $3\overline{\nu}(J)$/2 for large enough $n \in \N$. Together with Equation \eqref{eq:overlineNu1}, this proves the desired statement.
\end{proof}
\section{There is a linear number of dominating intervals along the walk} \label{sec:linear}
As before, let $\mu$ be a probability measure on a countable subgroup of $\mathrm{Homeo}_+(S^1)$ acting minimally and proximally on $S^1$. Recall that we denote by $(w_n)_{n\geq 0}$ a sample path of the $\mu$-random walk on $G$. 
\begin{definition}
	For every $n, s \in \N_+$ with $1 < s < n$ and each proper interval $J \subseteq S^1$, let us define the random variable $Z^J_{n,s} \in \N$ as the number of times $1 \leq k \leq \lceil n/s \rceil$ such that the interval $w_{ks}(J)$ dominates $w_{js}(J)$ for all $0 \leq j \leq k-1$. That is, we define $Z^J_{n,s} = \sum_{k = 1}^{\lceil n/s \rceil} 1_{B_k}$, where
	\[
	B_k = \{w_{ks}(J) \text{ dominates } w_{js}(J) \text{ for all }0 \leq j \leq k-1\}
	\] for every $1 \leq k \leq \lceil n/s \rceil$. We call the parameter $s \in \N_+$ the \emph{sparsity}.
	
\end{definition}

For every $l \in \N_+$ we denote by $\P_{\mu^{\ast l}}$ the probability measure on $G^\N$ given by the distribution of the trajectories of the $\mu^{\ast l}$-random walk on $G$. Denote by $\E_{\mu^{\ast l}}$ the associated expectation. 
The following proposition guarantees that in expectation there is a linear number of dominated intervals along the trajectory of the random walk.
\begin{proposition} \label{prop:linear}
	Let $\mu$ be a probability measure on a countable subgroup of $\mathrm{Homeo}_+(S^1)$ acting minimally and proximally on $S^1$, and denote by $\nu$ the unique $\mu$-stationary probability measure on $S^1$. Let $I\subseteq S^1$ be a closed interval such that $\nu(I)<1/2$ and let $J\subseteq I$. Then there exist $s,N \in \N_+$ and $0 < c < 1$ such that for all $l \in \N_+$ and every $n \ge N$ we have $\E_{\mu^{\ast l}}[Z^J_{n,s}]\geq cn$.
\end{proposition}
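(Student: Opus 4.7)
My plan is to establish a uniform lower bound $\mathbb{P}[B_k \mid \mathcal{F}_{(k-1)s}] \geq c > 0$ for the dominance events $B_k = \{w_{ks}(J) \text{ dominates } w_{js}(J) \text{ for all } 0 \le j \le k-1\}$, conditional on the first $(k-1)s$ steps of the $\mu^{\ast l}$-walk. By the Markov property I will decompose $w_{ks} = w_{(k-1)s}\gamma_k$ where $\gamma_k$ is distributed as $(\mu^{\ast l})^{\ast s} = \mu^{\ast ls}$ and is independent of the past. Since $w_{(k-1)s}$ is an orientation-preserving homeomorphism and the dominance relation of Definition \ref{def: domination of intervals} is preserved by such maps, the event $B_k$ becomes equivalent to $\gamma_k(J)$ dominating each of the transported intervals $K_j := w_{(k-1)s}^{-1}w_{js}(J)$ for $0 \le j \le k-1$, where in particular $K_{k-1} = J$.

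Next I will control the past configuration $\{K_j\}$. For $j < k-1$, the interval $K_j$ is the image of $J$ under a reverse-walk product of law $\overline{\mu^{\ast l}}^{\ast(k-1-j)s}$. Applying Theorem \ref{thm:meanXi} to the nondegenerate measure $\overline{\mu^{\ast l}}$, the expected diameter of $K_j$ decays exponentially in $k-1-j$. A Markov inequality together with a union bound then yields that, on an event of probability close to $1$, all but a bounded (in $s$) number of the $K_j$ are tiny arcs, and their concentration points are distributed essentially as the $\overline{\mu^{\ast l}}$-stationary measure on $S^1$.

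Having this control, I choose $s$ sufficiently large and exploit the minimality and proximality of the action together with the hypothesis $\nu(I) < 1/2$ to show that $\mu^{\ast ls}$ assigns positive mass, bounded below uniformly in the past, to the set of $\gamma$ for which $\gamma(J)$ is a small arc landing in a prescribed open subset of $S^1 \setminus I$. The key quantitative input is that $\nu(S^1 \setminus I) > 1/2$: combined with the asymptotic concentration of $\gamma(J)$ near a $\nu$-distributed point (via Proposition \ref{prop:equidistributionXi} and Corollary \ref{cor:meanBoundary}), this forces the ``good'' event to have probability bounded away from zero. On such an event, $\gamma_k(J) \subset S^1 \setminus I$ is automatically disjoint from $J = K_{k-1}$ and from every tiny $K_j$ that falls inside $I$, so $\gamma_k(J)$ dominates these by disjointness; the boundedly many ``bad'' $K_j$ with large diameter are handled via the containment alternative in the dominance relation by slightly enlarging the good event. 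Summing the uniform bound over $k = 1,\dots,\lceil n/s \rceil$ gives $\E_{\mu^{\ast l}}[Z^J_{n,s}] \ge c \lceil n/s\rceil \ge (c/s) n$ for $n \ge N$, with $c$ independent of $l$.

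The main obstacle will be verifying that the good-event probability is genuinely uniform over all past configurations of $\{K_j\}$: even after controlling the diameters, the tiny $K_j$ may cluster inside $\nu$-heavy regions of $I$, potentially obstructing most target choices for $\gamma_k(J)$. Overcoming this will require carefully combining the spatial distribution of the $K_j$ given by Theorem \ref{thm:meanXi} applied to $\overline{\mu^{\ast l}}$ with the flexibility offered by both alternatives in the dominance relation, using the strict inequality $\nu(I) < 1/2$ to guarantee that the resulting event always retains positive $\mu^{\ast ls}$-measure.
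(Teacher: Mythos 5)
Your reduction of $B_k$ to the event that $\gamma_k(J)$ dominates the transported intervals $K_j=w_{(k-1)s}^{-1}w_{js}(J)$ is correct, but the strategy built on it has a genuine gap: the almost-sure uniform bound $\P[B_k\mid\mathcal{F}_{(k-1)s}]\geq c>0$ that you aim for is false. Domination is not only a matter of avoiding tiny arcs. With positive probability (for $k$ large, by proximality applied to $\overline{S^1\setminus J}$, assuming $J$ has nonempty interior as in the application) the past configuration contains two intervals $K_{j_1}=S^1\setminus A_1$ and $K_{j_2}=S^1\setminus A_2$ with $A_1,A_2$ disjoint small open arcs. No proper closed interval $L$ dominates both: dominating $K_{j_1}$ forces $L\subseteq A_1$ or $S^1\setminus\mathrm{int}(L)\subseteq A_1$; in the first case $L$ is strictly contained in $K_{j_2}$, so it is neither disjoint from $K_{j_2}$ nor contains it; in the second case $\mathrm{int}(L)\supseteq K_{j_1}$ meets $K_{j_2}$, while $\mathrm{int}(L)\supseteq K_{j_2}$ would force $S^1\setminus\mathrm{int}(L)\subseteq A_1\cap A_2=\emptyset$. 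On such pasts the conditional probability is $0$, so your ``containment alternative for the boundedly many bad $K_j$'' cannot repair it. Worse, even after discarding these pasts, the remaining plan does not close: there are up to $k-1$ tiny $K_j$'s, located at the successive positions of the reversed walk (they do \emph{not} all sit inside $I$, nor all near one point), and $\gamma_k(J)$ must not straddle any of them. A union bound fails quantitatively: the law of an endpoint $\gamma_k(\alpha)$ of $\gamma_k(J)$ is purely atomic, and one computes $\P[\gamma_k(\alpha)\in K_j]\to\nu(J)$ as $k-1-j\to\infty$ (this is essentially the computation in Proposition \ref{prop:equidistributionXi}), so the error terms sum to order $k\,\nu(J)$ rather than $O(1)$, and the hypotheses give no smallness of $\nu(J)$ or $\overline{\nu}(J)$.

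The paper's proof inverts the roles of moving and fixed intervals precisely to avoid this accumulation. Lemma \ref{lemma:greedy} uses the i.i.d.\ structure of the increments to rewrite $\P_{\mu^{\ast l}}[B_k]$ as the probability that $w_{js}(J)$ dominates the \emph{single fixed} interval $J$ for all $1\leq j\leq k$, so that $\E_{\mu^{\ast l}}[Z^J_{n,s}]\geq \frac{n}{2s}\P_{\mu^{\ast l}}[\,w_{js}(J)\text{ dominates }J\ \forall j\geq1\,]$. Lemma \ref{lemma: lower bound prob. dominating} then bounds this probability below by $1/24$, uniformly in $l$: conditioned on the forward boundary point $\xi(\mathbf{w})$ lying at distance greater than $\epsilon$ from $I$ (an event of measure greater than $1/4$ because $\nu(I)<1/2$ and $\nu$ is atomless), Corollary \ref{cor:meanBoundary} and Markov's inequality show that each $w_{js}(J)$ has both endpoints within $\epsilon$ of $\xi(\mathbf{w})$, hence is disjoint from $J$, with the exceptional probabilities summing geometrically once the sparsity $s$ is chosen large. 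This yields the proposition with $c=\frac{1}{48s}$. If you want to salvage your conditional scheme you would need to restrict to a high-probability set of pasts and replace the union bound over the $K_j$ by something like the reversal argument anyway, at which point you have reconstructed the paper's proof in a harder order.
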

The proof of this proposition will follow from the next two lemmas.
\begin{lemma} \label{lemma:greedy}
	Consider the same hypotheses as in Proposition \ref{prop:linear}. Then for any $s,l \in \N_+$ there is $N\ge 1$ such that for all $n\ge N$ we have
	\[
	\E_{\mu^{\ast l}}[Z^J_{n,s}] \geq \frac{n}{2s}\P_{\mu^{\ast l}}\left[ w_{js}(J) \emph{ dominates } J \emph{ for all } j \geq 1 \right].
	\]
\end{lemma}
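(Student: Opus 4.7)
My plan is to prove the pointwise lower bound $\P_{\mu^{\ast l}}[B_k] \geq p$ for every $1 \leq k \leq \lceil n/s \rceil$, where $p := \P_{\mu^{\ast l}}[w_{js}(J) \text{ dominates } J \text{ for all } j \geq 1]$. Once this is established, the lemma follows by linearity of expectation:
\[
\E_{\mu^{\ast l}}[Z^J_{n,s}] = \sum_{k=1}^{\lceil n/s \rceil} \P[B_k] \geq \lceil n/s \rceil \cdot p \geq \frac{n}{2s}\, p,
\]
with the factor $1/2$ providing ample slack (so that no condition on $n$ is actually needed for this step beyond $n \geq 1$).

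To establish $\P[B_k] \geq p$, I would first unpack $B_k$. Since $w_{js}^{-1}$ is an orientation-preserving homeomorphism and hence preserves the domination relation of intervals, the condition `$w_{ks}(J)$ dominates $w_{js}(J)$' is equivalent to `$(w_{js}^{-1}w_{ks})(J)$ dominates $J$'. Using $w_{js}^{-1}w_{ks} = X_{js+1}\cdots X_{ks}$ and introducing the iid $\mu^{\ast ls}$-distributed block increments $\xi_i := X_{(i-1)s+1}\cdots X_{is}$, the event becomes
\[
B_k = \left\{\, (\xi_l \xi_{l+1} \cdots \xi_k)(J) \text{ dominates } J \text{ for all } 1 \leq l \leq k \,\right\}.
\]

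Next I would invoke the exchangeability of the iid blocks. Since $(\xi_1, \ldots, \xi_k) \stackrel{d}{=} (\xi_k, \xi_{k-1}, \ldots, \xi_1)$, setting $m := k - l + 1$ and tracking indices through the reversal identifies $\P[B_k]$ with the probability that the $m$-th position of a right $\mu^{\ast ls}$-random walk satisfies the domination constraint for every $1 \leq m \leq k$. This is the finite-horizon version of the event defining $p$, and since imposing fewer conditions can only increase the probability, we obtain $\P[B_k] \geq p$.

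The most delicate point in the plan is the index bookkeeping in the reversal step: naively, the reversed tail products form a left random walk rather than a right one, and the two joint distributions disagree when $\mu$ is not symmetric and $G$ is non-abelian. I would resolve this either by an auxiliary application of the equivalence `$g(J)$ dominates $J \Leftrightarrow J$ dominates $g^{-1}(J)$' composed with the fact that inversion swaps $\mu$-walks and $\bar\mu$-walks, or by expressing $B_k$ directly in terms of the backward increments $w_{ks}^{-1}w_{js}$ from the outset (so that the partial products along the sequence $j = k-1, k-2, \ldots, 0$ assemble into a genuine right random walk). This index-juggling is the only nontrivial bit of the argument.
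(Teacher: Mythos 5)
Your first step (reducing to $\P_{\mu^{\ast l}}[B_k]\geq p$ with $p=\P_{\mu^{\ast l}}[w_{js}(J)\text{ dominates }J\ \forall j\geq 1]$, then summing) is fine, and is in fact simpler than the paper's route, which writes $\P[B_k]$ as the probability of a decreasing sequence of events and passes to a Ces\`aro limit — that limit is the only reason the paper needs the threshold $N$ and the factor $\tfrac12$. The problem is the step you yourself flag as delicate, and neither of your two proposed repairs closes it. Both repairs — flipping the domination via ``$g(J)$ dominates $J$ iff $J$ dominates $g^{-1}(J)$'', or rewriting $B_k$ through the backward increments $w_{ks}^{-1}w_{js}$ — turn $\P[B_k]$ into
\[
\P\bigl[\,J \text{ dominates } \eta_1\cdots\eta_m(J) \text{ for all } 1\leq m\leq k\,\bigr],\qquad \eta_i \text{ i.i.d.\ with law } \overline{\mu^{\ast ls}},
\]
which is the same as the \emph{left}-walk event $\P[\xi_m\xi_{m-1}\cdots\xi_1(J)\text{ dominates }J\ \forall m\leq k]$. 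What the lemma's right-hand side requires is the \emph{right}-walk event $\P[\xi_1\cdots\xi_m(J)\text{ dominates }J\ \forall m\leq k]$. The tuples $(\xi_1\cdots\xi_m)_{m\leq k}$ and $(\xi_m\cdots\xi_1)_{m\leq k}$ have identical one-dimensional marginals but different joint laws once $\mu$ is not symmetric and $G$ is non-abelian (already $(\xi_1,\xi_1\xi_2)$ versus $(\xi_1,\xi_2\xi_1)$), so the identification you need at the end of either repair is exactly the unproved point, not index bookkeeping.

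To see that this is not a formality: for $k=2$ and a block law giving equal mass to $a$ and $b$, the difference between the right-walk probability and $\P[B_2]$ equals $\tfrac14\bigl(1_{\{a(J)\,\mathrm{dom}\,J\}}-1_{\{b(J)\,\mathrm{dom}\,J\}}\bigr)\bigl(1_{\{ab(J)\,\mathrm{dom}\,J\}}-1_{\{ba(J)\,\mathrm{dom}\,J\}}\bigr)$, which has no reason to vanish. So your plan, as written, proves $\E_{\mu^{\ast l}}[Z^J_{n,s}]\geq \tfrac{n}{2s}\,\P[\,J\text{ dominates }\eta_1\cdots\eta_j(J)\ \forall j\geq 1\,]$ for the reflected walk, not the stated inequality; to finish you would either have to prove the left/right identification for this particular event, or replace the right-hand side by the reflected-walk quantity and adapt Lemma \ref{lemma: lower bound prob. dominating} accordingly. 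For comparison, the paper's own proof performs the identical reversal and justifies it only by the assertion that it ``follows from the fact that the increments are i.i.d.'', so you have correctly located the one genuinely nontrivial point of the argument — but your proposal does not yet contain a proof of it.
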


\begin{proof}
	For $l \in \N_+$ we have
	\begin{align*}
		\E_{\mu^{\ast l}}[Z^J_{n,s}] & = \sum_{k = 1}^{\lceil n/s \rceil} \P_{\mu^{\ast l}}\left[w_{ks}(J) \text{ dominates } w_{js}(J) \text{ for all }0 \leq j \leq k-1 \right] \\
		& = \sum_{k = 1}^{\lceil n/s \rceil} \P_{\mu^{\ast l}}\left[ g_{js + 1} g_{js + 2} \cdots g_{ks}(J) \text{ dominates } J \text{ for all } 0 \leq j \leq k-1 \right] \\
		& = \sum_{k = 1}^{\lceil n/s \rceil} \P_{\mu^{\ast l}}\left[ g_{1} g_{2} \cdots g_{(k-j)s}(J) \text{ dominates } J \text{ for all } 0 \leq j \leq k-1 \right] \\
		& = \sum_{k = 1}^{\lceil n/s \rceil} \P_{\mu^{\ast l}}\left[ w_{js}(J) \text{ dominates } J \text{ for all } 1 \leq j \leq k \right],
	\end{align*} where the second to last equality follows from the fact that the increments $(g_j)_{j\ge 1}$ are independent and identically distributed.
	
	For every $k=1,2,\ldots,\lceil n/s \rceil$ let us denote by $D_k$ the event where $w_{js}(J)$ dominates $J$ for all $1\leq j \leq k$. Note that $D_{k+1}\subseteq D_k$ for each $k=1,2,\ldots,\lceil n/s \rceil-1$, and therefore we have that 
	\[
	\P_{\mu^{\ast l}}\left[D_k \right]\xrightarrow[k\to \infty]{}\P_{\mu^{\ast l}}\left[ w_{js}(J) \text{ dominates } J \text{ for all }j\geq 1 \right].
	\] From this, we also obtain 
	\[
	\left \lceil \frac{n}{s} \right \rceil^{-1} \sum_{k=1}^{\lceil n/s \rceil} \P_{\mu^{\ast l}}\left[D_k\right]\xrightarrow[n\to \infty]{}\P_{\mu^{\ast l}}\left[ w_{js}(J) \text{ dominates } J \text{ for all }j\geq 1 \right],
	\]
	which implies the desired inequality.
\end{proof}

\begin{lemma}\label{lemma: lower bound prob. dominating} Consider the same hypotheses as in Proposition \ref{prop:linear}. Then there exists a sparsity $s \in \N_+$ such that for all $l \in \N_+$ and every interval $J \subseteq I$ we have
	\[
	\P_{\mu^{\ast l}}\left[ w_{js}(J) \emph{ dominates } J \emph{ for all }j \geq 1\right] \geq 1/24.
	\]
\end{lemma}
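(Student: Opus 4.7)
The plan is to reformulate the event ``$w_{js}(J)$ dominates $J$'' as a much simpler condition on the orbits of the two endpoints of $J$, and then exploit the exponential convergence in mean of forward orbits to the boundary point $\xi(\mathbf{w})$ supplied by Corollary~\ref{cor:meanBoundary}.

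First I would establish the clean characterization: for closed proper arcs $I_1, I_2 \subsetneq S^1$, one has that $I_1$ dominates $I_2$ if and only if $\partial I_1 \cap I_2 = \emptyset$. Indeed, $S^1 \setminus \partial I_1$ is the disjoint union of the open arcs $\mathrm{int}(I_1)$ and $S^1 \setminus I_1$, and since $I_2$ is connected it must lie in exactly one of them; the two cases correspond to the two alternatives in the definition of domination. Writing $\partial J = \{a, b\}$ and applying this with $I_1 = w_{js}(J)$ and $I_2 = J$, the event whose probability I need to bound below becomes
\[
\bigcap_{j \geq 1} \bigl( \{w_{js}(a) \notin J\} \cap \{w_{js}(b) \notin J\} \bigr).
\]

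Second, I would control each endpoint orbit separately. The naive union bound $\sum_j \P[w_{js}(a) \in J]$ fails, since by Theorem~\ref{teo:dkn} we have $w_n(a) \to \xi(\mathbf{w})$ and $\xi(\mathbf{w}) \in J$ has positive probability $\nu(J)$, so these probabilities do not tend to zero. The remedy is to condition first on the position of $\xi(\mathbf{w})$. Using $\nu(I) < 1/2$ together with the outer regularity of $\nu$ on the closed set $I$, I choose $\delta_0 > 0$ so that $\nu(I^{\delta_0}) \leq 11/12$, where $I^{\delta_0}$ is the open $\delta_0$-neighborhood of $I$. On the event $\{d(\xi(\mathbf{w}), J) \geq \delta_0\}$, the inclusion
\[
\{w_{js}(a) \in J\} \cap \{d(\xi(\mathbf{w}), J) \geq \delta_0\} \subseteq \{d(w_{js}(a), \xi(\mathbf{w})) \geq \delta_0\}
\]
combined with Markov's inequality and Corollary~\ref{cor:meanBoundary} yields $\P[w_{js}(a) \in J,\, d(\xi, J) \geq \delta_0] \leq e^{-\lambda js}/\delta_0$ for $js \geq N$, and summing over $j \geq 1$ gives a geometric series.

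Putting the pieces together and using $J \subseteq I$,
\[
\P\bigl[\exists\, j \geq 1 :\, w_{js}(a) \in J \text{ or } w_{js}(b) \in J\bigr] \leq \nu(I^{\delta_0}) + \frac{2 e^{-\lambda s}}{\delta_0(1 - e^{-\lambda s})} \leq \tfrac{11}{12} + \tfrac{1}{24} = \tfrac{23}{24},
\]
provided $s \geq N$ is taken large enough to make the second summand at most $1/24$. This handles the case $l = 1$. For general $l \in \N_+$, the $\mu^{\ast l}$-walk at sparsity $s$ has the same joint distribution as the $\mu$-walk observed at times $\{jsl\}_{j \geq 1} \subseteq \{js\}_{j \geq 1}$, so the corresponding event is a superset of the same event for $\mu$ and inherits the lower bound $\geq 1/24$.

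The only mildly delicate point is the joint calibration of $\delta_0$ and $s$: $\delta_0$ must be small enough for $\nu(I^{\delta_0})$ to stay below the threshold $11/12$ (which is where $\nu(I) < 1/2$ enters), and $s$ must then be chosen large enough, depending on $\delta_0$, to absorb the factor $1/\delta_0$ coming from Markov. Since the two parameters decouple, this is not a real obstacle.
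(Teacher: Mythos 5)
Your proof is correct and follows essentially the same route as the paper: reduce failure of domination to an endpoint of $w_{js}(J)$ landing in $J\subseteq I$, split according to whether $\xi(\mathbf{w})$ lies within $\delta_0$ of $I$ (where $\nu(I)<1/2$ keeps that probability bounded away from $1$), and control the far case via Markov's inequality applied to Corollary \ref{cor:meanBoundary} followed by a geometric series in $j$. The only (harmless) deviation is your treatment of general $l$ by monotonicity of the event along the subsequence $\{jsl\}_{j\ge 1}\subseteq \{js\}_{j\ge 1}$, whereas the paper reruns the same estimate for each $l$ using the $l$-uniform bound $\sup_x\E_{\mu^{\ast l}}[d(w_n(x),\xi(\mathbf{w}))]\le e^{-\lambda n}$; both are valid.
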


\begin{proof} Note that the probability measure $\nu$ on $S^1$, which is the unique $\mu$-stationary probability measure on $S^1$, is also the unique $\mu^{\ast l}$-stationary measure on $S^1$ for each $l\ge 1$. Since $\nu$ is nonatomic we have that
	\[
	\nu(\{\xi \in S^1 :  0 < d(\xi,I)<\epsilon\}) \xrightarrow[\epsilon \to 0]{} 0.
	\] We see from this that there exists $\epsilon>0$, that does not depend on $l$, such that
	\[
	\nu(\{\xi \in S^1 :  d(\xi,I)>\epsilon\})\ge  (1-\nu(I))/2.
	\] 
	Let us define
	\( \Xi = \{\xi \in S^1 :  d(\xi,I)>\epsilon\}\), and recall that we are supposing $\nu(I)<1/2$. Together with the above, this implies that $\nu(\Xi)> 1/4$.
	
	Let $\lambda > 0$ be such that the conclusion of Corollary \ref{cor:meanBoundary} is verified for $\P = \P_{\mu^{\ast l}}$, and choose a sparsity $s$ such that $e^{-\lambda s/2} < \min\{\epsilon/8, 1/5\}$. Since
	\begin{equation} \label{eq:uniformity}
		\sup_{x \in S^1} \E_{\mu^{\ast l}} \left[ d\left( w_n(x), \xi(\mathbf{w}) \right) \right] = \sup_{x \in S^1} \E \left[ d(w_{ln}(x), \xi(\mathbf{w})) \right] \leq e^{-\lambda ln} \leq e^{-\lambda n}
	\end{equation} for all $n \in \N_+$, we can choose $\lambda$ and $s$ uniform in $l$.
	
	Denote by $l$ (resp.\ $r$) the left (resp.\ right) endpoint of the interval $J$, so that we have $J = [l,r]$. We claim that if the interval $w_{js}(J)$ does not dominate $J$ for some $j \geq 1$, then for each $\xi \in \Xi$ we have $\max\{d(w_{js}(l), \xi), d(w_{js}(r), \xi)\} \geq \epsilon$. Indeed, if $w_{js}(J)$ does not dominate $J$, then either $w_{js}(J) \subseteq J$ or $\{w_{js}(l), w_{js}(r)\}\cap J\neq \varnothing$. In both cases we obtain that $\{w_{js}(l), w_{js}(r)\}\cap I\neq \varnothing$, and hence there is an endpoint of the interval $w_{js}(J)$ at distance at least $\epsilon$ from $\Xi$ (see Figure \ref{fig:epsilonDistance}).
	
	\begin{figure}[h!]
		\centering
		\begin{tikzpicture}[scale=1]
			\draw (0,0) circle (2.2);
			
			\draw[thick, yellow] ([shift=(-22.5:2)]0,0) arc (-22.5:202.5:2) node[black, below, pos=.5] {$\Xi$};
			\draw[thick, red] ([shift=(210:2)]0,0) arc (210:330:2) node[black, above, pos=.5] {$I$};
			\draw[thick, red] ([shift=(250:2.4)]0,0) arc (250:290:2.4) node[black, below left, pos=.5] {$J$};
			\draw[thick, red] ([shift=(280:2.6)]0,0) arc (280:390:2.6) node[black, below right, pos=.5] {$w_{js}(J)$};
		\end{tikzpicture}
		\caption{The interval $w_{js}(J)$ does not dominate $J$.}
		\label{fig:epsilonDistance}
	\end{figure}
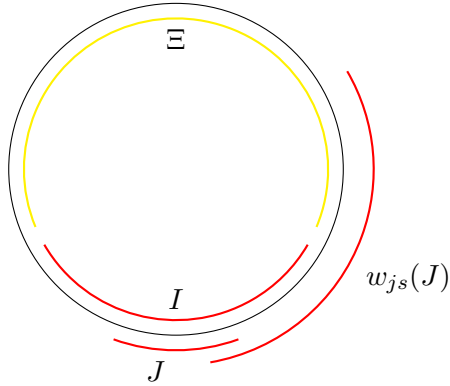

	For each $j \geq 1$ let us denote by $A_j$ the event where $w_{js}(J)$ does not dominate $J$. By the above paragraph, for every $j \geq 1$ we have
	\begin{align*} 
		\nu(\Xi) \P_{\mu^{\ast l}} \left[ A_j \mid \xi \in \Xi\right] & \leq \nu(\Xi) \P_{\mu^{\ast l}} \left[  \max\{d(w_{js}(l), \xi), d(w_{js}(r), \xi)\} > \epsilon \mid \xi \in \Xi \right] \\
		& \leq \P_{\mu^{\ast l}}\left[ \max\{d(w_{js}(l), \xi), d(w_{js}(r), \xi)\} > \epsilon\right] \leq e^{-\lambda js}\frac{2}{\epsilon},
	\end{align*}
	where in the last inequality we used Equation \eqref{eq:uniformity} together with the Markov inequality.
	
	Since $e^{-\lambda s/2} < \epsilon \nu(\Xi)/2$, we see that
	\[
	\P_{\mu^{\ast l}} \left[ A_j \mid \xi \in \Xi \right] \leq e^{-\lambda js}\frac{2}{\epsilon \nu(\Xi)} \leq e^{-\lambda s/2} e^{-\lambda (j-1)s}.
	\]

	Hence
	\[
	\P_{\mu^{\ast l}} \left[\bigcup_{j \geq 1} A_j  \mid \xi \in \Xi \right] \leq e^{-\lambda s/2} \sum_{j \geq 1} e^{-\lambda(j-1)s}  =  \frac{e^{-\lambda s/2}}{1 - e^{-\lambda s}}
	\] which is at most $5/24$ since $e^{-\lambda s/2} \leq 1/5$ and $e^{-\lambda s}\leq 1/25$. Finally, we obtain
	\begin{align*}
		\P_{\mu^{\ast l}}\left[ \bigcup_{j \geq 1} A_j \right] & = \P_{\mu^{\ast l}}\left[ \bigcup_{j \geq 1} A_j \mid \xi(\mathbf{w}) \in \Xi \right]\P_{\mu^{\ast l}}[\xi(\mathbf{w}) \in \Xi]\ + \\ &\hspace{20pt} + \P_{\mu^{\ast l}}\left[ \bigcup_{j \geq 1} A_j \mid \xi(\mathbf{w}) \not \in \Xi \right]\P_{\mu^{\ast l}}[\xi(\mathbf{w}) \not \in \Xi] \\
		& \leq \frac{5}{24} + \nu(S^1\setminus \Xi) \leq \frac{5}{24} + \frac{3}{4} = \frac{23}{24},
	\end{align*} which proves the lemma.
\end{proof}

\begin{proof}[Proof of Proposition \ref{prop:linear}]
	Lemma \ref{lemma:greedy} together with Lemma \ref{lemma: lower bound prob. dominating} imply the statement of Proposition \ref{prop:linear} by setting $c = \frac{1}{48 s}$.
\end{proof}

\section{Proof of Theorem \ref{thm:entropy}} \label{sec:proofA}
We consider, as in the statement of Theorem \ref{thm:entropy}, a countable group $G$ acting proximally, minimally, and topologically nonfreely on $S^1$ and $\mu$ a nondegenerate probability measure on $G$. By Lemma \ref{lemma: lazy boundaries}, we may assume that $\mu(e_G) > 0$.

We denote by $\nu$ the unique $\mu$-stationary measure on $S^1$, and we denote by $\overline{\nu}$ the unique $\overline{\mu}$ stationary measure where $\overline{\mu}$ is the nondegenerate probability measure on $G$ given by $\overline{\mu}(g) = \mu(g^{-1})$ for each $g \in G$.

Fix an interval $I \subset S^1$ such that $\nu(I) < 1/2$. Using Proposition \ref{prop:linear} we can find $0< c < 1$ and $s \in \N_+$ such that $\E_{\mu^{\ast l}}[Z^J_{n,s}] \geq c n$ for all $l \in \N_+$, every interval $J \subseteq I$ and all sufficiently large $n \in \N_+$. Since
\[
\E_{\mu^{\ast sl}}[Z^J_{n,1}] = \E_{\mu^{\ast l}}[Z^J_{ns,s}] \geq c ns \geq cn,
\] for all $l \in \N_+$ and sufficiently large $n \in \N_+$, we can replace once and for all $\mu$ by $\mu^{\ast sl}$ with some large $l \in \N_+$ (and drop the subscript $\mu^{*l}$ from $\P, \E$) so that 
\begin{itemize}
	\item $\mathrm{supp}(\mu)$ contains an element $a \in G \setminus \{e_G\}$ with $\mathrm{supp}(a) \subseteq I$, such that the smallest subinterval $J \subseteq I$ that contains $\mathrm{supp}(a)$ satisfies $\overline{\nu}(J) < c/8$, and
	\item $\E[Z^J_{n,1}] \geq cn $ for all large $n \in \N_+$.
\end{itemize} Note that the stationary measures $\nu$, $
\overline{\nu}$ on $S^1$ do not change after doing this replacement.

\begin{lemma}\label{lemma:Wn}
	For every $n \in \N_+$, denote by $W_n \in \N_+$ the random variable that counts the number of times $1 \leq k \leq n$ such that
	\begin{itemize}
		\item the interval $w_{k}(J)$ dominates $w_{j}(J)$ for all $0 \leq j \leq k-1$,
		\item $w_{k}^{-1}(\xi(\mathbf{w}))$ does not belong to $J$, and
		\item the increment $g_{k+1}$ is in $\{a, e_G\}$. 
	\end{itemize}
	Then there exists $0 < c' < 1$ such that $\E[W_n] \geq c' n$ for sufficiently large $n \in \N_+$.
\end{lemma}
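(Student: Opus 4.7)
The plan is to write $W_n = \sum_{k=1}^n \tau_k \rho_k \chi_k$, where $\tau_k, \rho_k, \chi_k \in \{0,1\}$ are the indicators of the three bullet-point conditions at time $k$. Note that $\sum_{k=1}^n \tau_k = Z^J_{n,1}$, which by the discussion preceding the lemma satisfies $\E[Z^J_{n,1}] \geq cn$ for all sufficiently large $n$. The strategy is to exhibit a favorable independence structure within the triple $(\tau_k, \rho_k, \chi_k)$ that allows us to factor its expectation into a universal positive constant times $\E[\tau_k]$.

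The key observation is that $\tau_k$ is $\mathcal{F}_k$-measurable (with $\mathcal{F}_k = \sigma(g_1,\ldots,g_k)$), while $\rho_k$ and $\chi_k$ depend only on the shifted trajectory $\mathbf{w}^{(k)} \coloneqq (w_k^{-1} w_{k+j})_{j\geq 0}$, which is an independent $\mu$-random walk with the same distribution as $\mathbf{w}$. Indeed, $\chi_k$ is a function of the first increment of $\mathbf{w}^{(k)}$; and using the identity $\xi(\mathbf{w}) = w_k \xi(\mathbf{w}^{(k)})$, which follows from applying the convergence $(w_n)_\ast \nu \to \delta_{\xi(\mathbf{w})}$ of Theorem \ref{teo:dkn} to both $\mathbf{w}$ and $\mathbf{w}^{(k)}$, we have $\rho_k = \mathbf{1}\{\xi(\mathbf{w}^{(k)}) \notin J\}$. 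Consequently $\tau_k$ is independent of $(\rho_k, \chi_k)$, and the pair $(\rho_k, \chi_k)$ has the same joint distribution as $(\mathbf{1}\{\xi(\mathbf{w}) \notin J\}, \mathbf{1}\{g_1 \in \{a, e_G\}\})$. Defining the constant $p_0 \coloneqq \P(\xi(\mathbf{w}) \notin J \text{ and } g_1 \in \{a, e_G\})$, linearity of expectation then yields $\E[W_n] = p_0 \, \E[Z^J_{n,1}] \geq p_0\, c\, n$ for all large $n$.

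It remains to verify $p_0 > 0$. Conditioning on the event $\{g_1 = e_G\}$, which has positive probability $\mu(e_G) > 0$, we have $w_1 = e_G$, so $\xi(\mathbf{w})$ is distributed according to $\nu$. Since $\nu$ is fully supported on $S^1$ (by Theorem \ref{teo:dkn}) and $J \subsetneq S^1$ (because $J \subseteq I$ and $\nu(I) < 1/2$), the open set $S^1 \setminus J$ has positive $\nu$-measure, giving
\[
p_0 \;\geq\; \mu(e_G)\, (1 - \nu(J)) \;>\; 0.
\]
Setting $c' \coloneqq p_0\, c \in (0,1)$ (shrunk if necessary) concludes the proof. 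There is no significant obstacle in this argument; the only point that requires care is separating the three indicators via the cocycle identity for $\xi$, after which everything reduces to linearity of expectation and the positivity of $\nu$ on the complement of $J$.
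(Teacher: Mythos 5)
Your proof is correct, but it follows a genuinely different route from the paper's. The paper first bounds the count $\widetilde{W}_n$ of times satisfying only the first two conditions: it combines $\E[Z^J_{n,1}]\ge cn$ with Proposition \ref{prop:equidistributionXi} (this is exactly why the setup preceding the lemma arranges $\overline{\nu}(J)<c/8$ for the reflected stationary measure $\overline{\nu}$) to get $\E[\widetilde{W}_n]\ge c''n$, and only then accounts for the increment condition by multiplying by $\mu(a)+\mu(e_G)$. You instead observe that the domination indicator $\tau_k$ is $\sigma(g_1,\dots,g_k)$-measurable while the pair $(\rho_k,\chi_k)$ is a function of the independent future increments via $w_k^{-1}(\xi(\mathbf{w}))=\xi(\mathbf{w}^{(k)})$, yielding the exact factorization $\E[W_n]=p_0\,\E[Z^J_{n,1}]$ with $p_0=\P\left[\xi(\mathbf{w})\notin J,\ g_1\in\{a,e_G\}\right]\ge\mu(e_G)(1-\nu(J))>0$. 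This buys two things. First, it makes Proposition \ref{prop:equidistributionXi} and the reflected measure $\overline{\nu}$ unnecessary for this step: you only use $\nu(J)<1$ and $\mu(e_G)>0$, both available from the standing assumptions of Section \ref{sec:proofA}. Second, it transparently handles the fact that the second and third conditions are \emph{both} functions of the future increments and hence a priori correlated with each other; the paper instead asserts that $\{g_{k+1}\in\{a,e_G\}\}$ is independent of $\widetilde{C}_k$, which is true but only because $a$ and $e_G$ preserve $J$ (conditionally on $g_{k+1}=h$ the point $w_k^{-1}(\xi(\mathbf{w}))$ is distributed as $h_\ast\nu$, so the relevant probability is $1-\nu(h^{-1}(J))$), a subtlety that your joint treatment of $(\rho_k,\chi_k)$ sidesteps entirely.
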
 
\begin{proof}
	For every $n \in \N_+$, denote by $\widetilde{W}_n \in \N$ the random variable that counts the number of times $1 \leq k \leq n$ such that
	\begin{itemize}
		\item the interval $w_{k}(J)$ dominates $w_{j}(J)$ for all $0 \leq j \leq k-1$, and
		\item $w_{k}^{-1}(\xi(\mathbf{w}))$ does not belong to $J$.
	\end{itemize}
	By Corollary \ref{prop:equidistributionXi}, we see that 
	\begin{equation} \label{eq:linear2}
		\frac{\E\left[\abs{\left\{ 1 \leq k \leq n : w_k^{-1}(\xi(\mathbf{w})) \not \in J \right\} }\right]}{n} \geq 1-2\overline{\nu}(J) \geq 1 -\frac{c}{4},
	\end{equation} for all sufficiently large $n \in \N_+$. The bound $\E[Z^J_{n,1}] \geq c n$ and Equation \eqref{eq:linear2} imply that there exists $c'' \in (0,1)$ such that $\E[\widetilde{W}_n] \geq c'' n$ for all sufficiently large $n \in \N_+$.
	
	Notice that
	\[
	\widetilde{W}_n = \sum_{k = 1}^n 1_{\widetilde{C}_k} \quad \text{and} \quad W_n = \sum_{k = 1}^n 1_{C_k}, 
	\] where
	\[
	\widetilde{C}_k = \{w_k(J) \text{ dominates } w_j(J) \text{ for }0 \leq j \leq k-1, \text{ and }w_k^{-1}(\xi(\mathbf{w})) \not \in J\}
	\] and 
	\[
	C_k = \widetilde{C}_k \cap \{g_{k+1} \in \{a, e_G\}\}
	\] for every $1 \leq k \leq n$. Since the event $\{g_{k+1} \in \{a, e_G\}\}$ is independent from $\widetilde{C}_k$ under $\P$, we deduce that
	\[
	\E[W_n] = (\mu(a) + \mu(e_G)) \E[ \widetilde{W}_n ] \geq (\mu(a) + \mu(e_G)) c'' n
	\]
	for all sufficiently large $n \in \N_+$. The statement from the lemma then holds for the value $c' = (\mu(a) + \mu(e_G)) c''$.
\end{proof}

We recall the following basic fact about random variables, that we will use below.
\begin{lemma}\label{lemma:pigeon}
	Let $0 \leq X \leq 1$ be a real-valued random variable with mean $\E[X] > \lambda > 0$.
	Then
	\[
	\P[ X > \lambda/2] \geq \lambda/2.
	\]
\end{lemma}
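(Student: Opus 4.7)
The plan is to split the expectation of $X$ according to whether $X$ exceeds the threshold $\lambda/2$ or not, and use the boundedness $0 \le X \le 1$ together with the hypothesis $\E[X] > \lambda$.

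More concretely, I would write
\[
\E[X] = \E[X \cdot \mathds{1}_{\{X > \lambda/2\}}] + \E[X \cdot \mathds{1}_{\{X \le \lambda/2\}}].
\]
For the first summand I use the upper bound $X \le 1$, so it is at most $\P[X > \lambda/2]$. For the second summand I use $X \le \lambda/2$ on the event $\{X \le \lambda/2\}$, so it is at most $\lambda/2$. Combining these with the hypothesis $\E[X] > \lambda$ yields
\[
\lambda < \P[X > \lambda/2] + \lambda/2,
\]
and rearranging gives $\P[X > \lambda/2] > \lambda/2$, which is the desired inequality.

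There is no real obstacle here: this is a standard reverse Markov-type estimate. The only subtle point is to be careful with the direction of the inequalities (strict versus non-strict), but since we are assuming the strict inequality $\E[X] > \lambda$ in the hypothesis, the conclusion $\P[X > \lambda/2] \ge \lambda/2$ follows immediately from the strict version obtained above.
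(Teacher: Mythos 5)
Your proof is correct and is essentially identical to the paper's: both split $\E[X]$ over the events $\{X > \lambda/2\}$ and $\{X \le \lambda/2\}$, bound the first piece by $\P[X > \lambda/2]$ using $X \le 1$ and the second by $\lambda/2$, and rearrange. Nothing to add.
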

\begin{proof}
	The statement follows from the inequality
	\[
	\lambda < \E[X] \leq \lambda/2 \P[X \leq \lambda/2] + \P[ X > \lambda/2] \leq \lambda/2 + \P[ X > \lambda/2]. \qedhere
	\]
\end{proof}
Finally, we present the proof of Theorem \ref{thm:entropy}.
\begin{proof}[Proof of Theorem \ref{thm:entropy}]
	Just as in the statement of Lemma \ref{lemma:Wn}, for every $n\ge 1$ let us denote by $W_n \in \N_+$ the random variable that counts the number of times $1 \leq k \leq n$ such that
	\begin{itemize}
		\item the interval $w_{k}(J)$ dominates $w_{j}(J)$ for all $0 \leq j \leq k-1$,
		\item $w_{k}^{-1}(\xi(\mathbf{w}))$ does not belong to $J$, and
		\item the increment $g_{k+1}$ is in $\{a, e_G\}$. 
	\end{itemize}
	
	For $\nu$-almost every $\xi \in S^1$ and $n \in \N_+$ we apply Lemma \ref{lemma:pigeon} to the random variable \[\mathbf{w} \in (G^\N, \P^\xi) \mapsto \frac{W_n}{n} \in [0,1],\] and deduce that
	\[
	\P^\xi\left[ \frac{W_n}{n} > \frac{\E^\xi[W_n]}{2n}\right] \geq \frac{\E^\xi [W_n]}{2n}.
	\] Now consider the random variable
	\[
	\xi \in (S^1, \nu) \mapsto \frac{\E^\xi[W_n]}{n} \in [0,1]
	\] and apply Lemmas \ref{lemma:Wn} and \ref{lemma:pigeon} to see that $\nu(\Xi_n)>c'/2$, where
	\[
	\Xi_n \coloneqq \left \{\xi \in S^1  \, : \,  \frac{\E^\xi[W_n]}{n} \geq \frac{c'}{2} \right \}.
	\]
	From this, we conclude that  
	\begin{equation} \label{eq:Wn}
		\P^\xi \left[ \frac{W_n}{n} > \frac{c'}{2} \right] \geq \frac{c'}{2}
	\end{equation} for $\nu$-almost every $\xi \in \Xi_n$.
	
	For every $n \in \N_+$ and $\nu$-almost every $\xi \in \Xi_n$, consider the set of infinite trajectories $\mathbf{w} = (w_n)_{n \geq 0}\in G^\N$ such that $W_n(\mathbf{w})/n > c'/2$. To each such sample path $\mathbf{w}$ we associate a maximal set of indices $1 \leq i_1 < \cdots < i_k \leq n$ of size $k = k(\mathbf{w})$ such that for every $1 \leq r \leq k$ we have that
	\begin{itemize}
		\item the interval $w_{i_r - 1}(J)$ dominates $w_l(J)$ for all $0 \leq l < i_r - 1$, and
		\item $g_{i_r} \in \{a, e_G\}$ and $w_{i_r}^{-1}(\xi) \not \in J$.
	\end{itemize}
	Define $\mathbf{x}$ as the $(n-k)$-tuple consisting of all increments of $\mathbf{w}$ at times instants in $\{1,\ldots, n\} \setminus \{i_1, \ldots, i_k\}$. By definition, the collection $q(\mathbf{w}) = (\mathbf{x}, i_1,\ldots, i_k)$ is $\xi$-good, and we have $k(\mathbf{w}) \geq W_n > c'n/2$. 
	
	Denote by $A_{n,\xi}$ the set of collections obtained in this way. We claim that the collections $A_{n,\xi}$ satisfy the conditions of Theorem \ref{thm:relEntropy}. Indeed, since $A_{n,\xi}$ is composed of $\xi$-good collections, thanks to Proposition \ref{proposition:rconditions} we have that Conditions \eqref{item:rentropy3}, \eqref{item:rentropy4} and \eqref{item:rentropy2} are satisfied. Moreover, for every $q = (\mathbf{x}, i_1, \ldots, i_k) \in A_{n,\xi}$ we have $k \geq c'n/2$ by the previous paragraph. Therefore, Condition \eqref{item:rentropy1} is also satisfied. Finally, from Inequality \eqref{eq:Wn} we get that
	\[
	\P^\xi \left[ \bigcup_{q \in A_{n,\xi}} \bigcup_{\mathbf{y} \in T^{a,e_G}(q)} [\mathbf{y}] \right] \geq c'/2,
	\] so Condition \eqref{item:rentropy5} also holds. The hypotheses of Theorem \ref{thm:relEntropy} are satisfied, and hence we have finished the proof of Theorem \ref{thm:entropy}.
\end{proof}
\section{Proof of Theorem \ref{thm:lamps}} \label{sec:harmonic}
Recall that the group $\mathrm{PAff}_+(S^1)$ of \emph{piecewise affine} orientation-preserving homeomorphisms of $S^1 \cong \R/\Z$ is the group of all $g \in \mathrm{Homeo}_+(S^1)$ such that there exists a finite subset $D \subset S^1$ such that $g$ restricted to every connected component $C$ of $S^1 \setminus D$ is of the form $g(x) = a x + b$ for some $a > 0$ and $b \in \R/\Z$. Thus, for every $g \in \mathrm{PAff}_+(S^1)$ the derivative $g'$ is defined outside a finite set and is locally constant. The points of $S^1$ where the derivative of $g$ is not defined are called the \emph{breakpoints} of $g$.

In this section $G$ is a countable subgroup of $\mathrm{PAff}_+(S^1)$ acting minimally, proximally and topologically nonfreely on $S^1$, and $\mu$ is a nondegenerate probability measure on $G$ such that $\sum_{g \in G} \mu(g) \abs{\cC_g}$ is finite, where $\abs{\cC_g}$ is the number of breakpoints of $g \in G$. We denote by $\mathbf{Br}\subset S^1$ the countable set of breakpoints of elements in $G$.

\subsection{The breakpoint boundary}
In this subsection we adapt V. Kaimanovich's construction of a $\mu$-boundary of Thompson's group $F$ \cite{Kaimanovich2017} to the context above. The arguments are analogous to those that appear in \cite{Kaimanovich2017} and their extension by B. Stankov \cite{Stankov2021} for probability measures with a finite first moment.

Given an element $g \in G$, define a finitely supported function $\cC_g \colon \mathbf{Br} \to \R$ by setting 
\[
\cC_g(x) = \log\left((g^{-1})'(x^+)\right) - \log\left((g^{-1})'(x^-)\right)
\] for $x \in \mathbf{Br}$, where $(g^{-1})'(x^+)$ (resp. $(g^{-1})'(x^-)$ is the left (resp. right) derivative of $g^{-1}$ at $x$. That is, $\cC_g(x)$ is the derivative jump of $g^{-1}$ at $x$. This definition differs slightly from those used in \cite{Kaimanovich2017, Stankov2021}, but this difference is necessary since we consider \emph{right} random walks and the \emph{left} action of $G$ on $S^1$.

Denote the set of all (not necessarily finitely supported) functions $\mathbf{Br} \to \R$ by $\R^{\mathbf{Br}}$. Define a left action of $G$ on $\R^{\mathbf{Br}}$ by
\[
(g, \cC) \mapsto \left(S_g \cC \colon x \in \mathbf{Br} \mapsto \cC(g^{-1}(x))\right).
\] By the chain rule, we have
\[
\cC_{gh}(x) = \log\left((h^{-1})'(g^{-1}(x)^+)\right) - \log\left((h^{-1})'(g^{-1}(x)^-)\right) + \log\left((g^{-1})'(x^+)\right) - \log\left((g^{-1})'(x^-)\right)
\]for all $g,h \in G$ and $x \in \mathbf{Br}$, so that 
\begin{equation} \label{eq:equivariance}
	\cC_{gh} = \cC_g + S_g\cC_h.
\end{equation}
Let us define a second left action of $G$ on $\R^{\mathbf{Br}}$ by $(g, \cC) \mapsto \cC_g + S_g\cC$, so we have $\cC_{gh} = g.\cC_h$ for all $g,h \in G$. This is the action on $\R^{\mathbf{Br}}$ that will define a nontrivial boundary for $G$.

To prove the transience of the random walks on $G$-orbits of elements of $\mathbf{Br}$ we emulate \cite[Theorem 25]{Kaimanovich2017}, for which we need a comparison lemma for Markov operators due to \cite{BaldiLohouePeyriere1977}; see also the proposition at the end of Section 4 in \cite{Varopoulos1983} for a more general version of this result.
\begin{proposition} \label{prop:comparison}
	Let $P_1(\cdot, \cdot), P_2(\cdot, \cdot)$ be doubly stochastic kernels on a countable set $X$ such that $P_2(\cdot, \cdot)$ is symmetric and there exists $\epsilon > 0$ such that
	\[
	P_1(x,y) \geq \epsilon P_2(x,y) \text{ for all }x,y\in X.
	\]
	Then the Markov process determined by $P_1$ and started at $x \in X$ is transient if the Markov process determined by $P_2$ and started at $x \in X$ is transient.
\end{proposition}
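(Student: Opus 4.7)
The plan is to pass from the entrywise inequality $P_1 \geq \epsilon P_2$ to a comparison of Dirichlet forms for \emph{symmetric} Markov operators on $\ell^2(X)$. I would decompose $P_1 = \epsilon P_2 + (1-\epsilon) R$, where $R = (1-\epsilon)^{-1}(P_1 - \epsilon P_2)$ is itself a doubly stochastic kernel, hence an $\ell^2$-contraction. Combining the triangle inequality $\|P_1 f\|_2 \leq \epsilon \|P_2 f\|_2 + (1-\epsilon)\|R f\|_2$ with convexity of $t\mapsto t^2$ and the contractivity of $R$ gives the quadratic-form inequality
\[
\|f\|_2^2 - \|P_1 f\|_2^2 \;\geq\; \epsilon\bigl(\|f\|_2^2 - \|P_2 f\|_2^2\bigr) \qquad \text{for every } f \in \ell^2(X).
\]
Since $P_1^* P_1$ and $P_2^2$ are both symmetric doubly stochastic kernels on $X$, this rewrites as the Dirichlet-form comparison $\mathcal{E}_{P_1^* P_1}(f,f) \geq \epsilon\, \mathcal{E}_{P_2^2}(f,f)$.

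Next I would invoke the classical Dirichlet-form characterization of transience for symmetric Markov chains: such a kernel $Q$ is transient if and only if the capacity $\mathrm{cap}_Q(x) = \inf\{\mathcal{E}_Q(f,f) : f \text{ finitely supported},\ f(x)=1\}$ is strictly positive. The form comparison immediately yields $\mathrm{cap}_{P_1^* P_1}(x) \geq \epsilon\, \mathrm{cap}_{P_2^2}(x)$, so transience of $P_2^2$ propagates to transience of $P_1^* P_1$. Transience of $P_2$ is in turn equivalent to transience of $P_2^2$: the Cauchy--Schwarz bound $P_2^{2n+1}(x,x)\leq \sqrt{P_2^{2n}(x,x)\,P_2^{2n+2}(x,x)}$, which holds for symmetric $P_2$, shows that odd iterates are controlled by adjacent even ones.

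The main obstacle is the last step: translating transience of $P_1^* P_1$ back into transience of $P_1$ itself, since $P_1$ is not assumed symmetric. A direct Cauchy--Schwarz bound
\[
P_1^{2n}(x,x)\;\leq\; \sqrt{((P_1^*)^n P_1^n)(x,x)\,\cdot\,(P_1^n (P_1^*)^n)(x,x)}
\]
reduces the question to comparing the decay of $((P_1^*)^n P_1^n)(x,x)$ with that of $(P_1^* P_1)^n(x,x)$, and these two self-adjoint positive operators are \emph{not} comparable in general (for instance, for the one-sided shift on $\mathbb{Z}$ one has $P_1^* P_1 = I$ while $P_1$ is transient). The symmetry hypothesis on $P_2$ is essential here: it forces $\check P_1 \geq \epsilon P_2$ as well (since $P_1(y,x) \geq \epsilon P_2(y,x) = \epsilon P_2(x,y)$), ruling out such purely directional examples and endowing $P_1$ with a genuinely reversible component. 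The Green's-function comparison between $P_1$ and $P_1^* P_1$ that closes the argument under this extra structure is exactly the content of the comparison lemma of Baldi--Lohou\'e--Peyri\`ere~\cite{BaldiLohouePeyriere1977} (see also \cite{Varopoulos1983}), which I would invoke at this final stage to conclude.
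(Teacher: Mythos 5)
The paper does not actually prove this proposition: it is quoted from Baldi--Lohou\'e--Peyri\`ere \cite{BaldiLohouePeyriere1977} (with Varopoulos \cite{Varopoulos1983} cited for a more general version), and the citation is the entire justification. Your proposal, by contrast, is presented as a proof, and as a proof it is circular: after assembling some genuinely correct preliminary machinery, you close by saying that the remaining step ``is exactly the content of the comparison lemma of Baldi--Lohou\'e--Peyri\`ere, which I would invoke at this final stage.'' But that comparison lemma \emph{is} the proposition under discussion, so the one step carrying the real content is assumed rather than proved. Moreover, the partial argument you do give does not reduce the problem to anything simpler: the chain ``$P_2$ transient $\Rightarrow P_2^2$ transient $\Rightarrow P_1^*P_1$ transient'' is fine (the decomposition $P_1=\epsilon P_2+(1-\epsilon)R$, the convexity trick, the capacity comparison, and the odd/even Cauchy--Schwarz bound are all correct), but the implication ``$P_1^*P_1$ transient $\Rightarrow P_1$ transient'' is precisely where the difficulty sits, it is not a standard fact, and nothing in your write-up addresses it. So the gap is real, and you have located it but not filled it.

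The fix is that the relevant symmetrization is the additive one $\tfrac{1}{2}(P_1+P_1^*)$, used inside the resolvent rather than after squaring. For $0<z<1$ set $u_z=(I-zP_1)^{-1}\delta_x=\sum_{n\ge 0}z^nP_1^n\delta_x$, which lies in $\ell^1(X)\subseteq\ell^2(X)$ because $P_1$ is doubly stochastic; its entries are nonnegative and $u_z(x)=G_1(x,x\mid z)\coloneqq\sum_{n\ge 0}z^nP_1^n(x,x)$. Since $u_z$ is real, $G_1(x,x\mid z)=\langle\delta_x,u_z\rangle=\langle(I-zP_1)u_z,u_z\rangle=\langle(I-z\tfrac{P_1+P_1^*}{2})u_z,u_z\rangle$. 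Now $P_1\ge\epsilon P_2$ and $P_1^*\ge\epsilon P_2^*=\epsilon P_2$ entrywise, so $\tfrac{P_1+P_1^*}{2}-\epsilon P_2=(1-\epsilon)S$ with $S$ symmetric doubly stochastic, hence an $\ell^2$-contraction; therefore $I-z\tfrac{P_1+P_1^*}{2}=\epsilon(I-zP_2)+(1-\epsilon)(I-zS)\ge\epsilon(I-zP_2)\ge\epsilon z(I-P_2)$ as quadratic forms. Transience of the symmetric chain $P_2$ is equivalent to $\delta_x=(I-P_2)^{1/2}h$ for some $h\in\ell^2(X)$, so Cauchy--Schwarz for the form $I-P_2$ gives $G_1(x,x\mid z)^2=\abs{\langle u_z,\delta_x\rangle}^2\le\norm{h}_2^2\,\langle(I-P_2)u_z,u_z\rangle\le(\epsilon z)^{-1}\norm{h}_2^2\,G_1(x,x\mid z)$, whence $G_1(x,x\mid z)\le\norm{h}_2^2/(\epsilon z)$ uniformly in $z$; letting $z\to1^-$ and using monotone convergence yields $\sum_{n\ge 0}P_1^n(x,x)<\infty$, i.e.\ $P_1$ is transient. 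Note where the symmetry of $P_2$ enters, exactly as you suspected it must: it is needed both for $P_1^*\ge\epsilon P_2$ and for the range characterization of transience.
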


\begin{lemma}\label{lemma:transience}
	For every $x \in S^1$ the $\mu$-random walk on $\mathrm{Orb}_G(x)$ started at $x$ is transient.
\end{lemma}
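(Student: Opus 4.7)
My plan is to apply the Baldi--Lohou\'e--Peyri\`ere comparison lemma (Proposition \ref{prop:comparison}) together with the convergence of the random walk to the circle boundary. Let $P(y,z) = \sum_{g \in G : gy = z} \mu(g)$ denote the transition kernel of the $\mu$-random walk on $\mathrm{Orb}_G(x)$. Using the $G$-invariance of the orbit, $P$ is doubly stochastic: the row sums equal $\sum_g \mu(g) = 1$ and the column sums equal $\sum_g \mu(g)\mathbf{1}\{g^{-1}z \in \mathrm{Orb}_G(x)\} = 1$. Note also that $P$ is irreducible, since the nondegeneracy of $\mu$ ensures that any $h \in G$ can be written as $g_1\cdots g_n$ with $g_i \in \mathrm{supp}(\mu)$, giving $P^n(y, hy) > 0$.

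To apply Proposition \ref{prop:comparison}, I would construct a symmetric doubly stochastic kernel $Q$ on $\mathrm{Orb}_G(x)$ satisfying $P \geq \varepsilon Q$ for some $\varepsilon > 0$ and show that $Q$ is transient at $x$. A natural candidate is the kernel $Q(y,z) = \sum_{g : gy = z} (\check\mu \ast \mu)(g)$ induced by the symmetric probability measure $\check\mu \ast \mu$ on $G$; equivalently, $Q = PP^{\ast}$, which is automatically symmetric and doubly stochastic. The inequality $P \geq \varepsilon Q$ would be arranged by first passing to a convolution power $\mu^{\ast k}$, which by Lemma \ref{lemma: lazy boundaries} preserves the $\mu$-boundary structure and transience, and using nondegeneracy of $\mu$ to obtain uniform lower bounds on $\mu^{\ast k}(g)$ for the (finitely many relevant) elements $g$ charged by $Q$ at any given pair of points.

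For the transience of $Q$, I would use that the walk driven by $\check\mu \ast \mu$ is irreducible on $\mathrm{Orb}_G(x)$ and that its sample paths converge almost surely to a random boundary point in $S^1$ distributed according to an atomless stationary measure (this follows from Theorem \ref{teo:dkn} applied to the nondegenerate symmetric measure $\check\mu \ast \mu$, combined with proximality to obtain pointwise convergence $w_n^Q(x) \to \xi^Q(\mathbf{w})$ away from the atomless exceptional set of the reverse walk). If $Q$ were recurrent, the Markov zero-one law applied to the irreducible chain would force $w_n^Q(x) = x$ infinitely often almost surely, hence $\xi^Q(\mathbf{w}) = x$ on a set of positive probability, contradicting the atomlessness of the stationary measure. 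Hence $Q$ is transient, and Proposition \ref{prop:comparison} yields transience of $P$.

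The main technical obstacle is establishing the domination $P \geq \varepsilon Q$: because $P$ is typically asymmetric, this inequality cannot hold pointwise for the one-step kernel. Passing to a suitably large convolution power $\mu^{\ast k}$ is what allows both the "forward" and "reverse" transitions appearing in $Q$ to be simultaneously charged with comparable mass, and the validity of this reduction (transience of $P$ iff transience of $P^{\ast k}$, on an irreducible chain) must be verified with care. Once the domination is in place, the boundary argument for transience of $Q$---which crucially uses only the atomlessness of the stationary measure from Theorem \ref{teo:dkn}---concludes the proof.
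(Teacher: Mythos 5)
Your high-level scheme---dominate the orbit kernel by a symmetric doubly stochastic kernel and invoke Proposition \ref{prop:comparison}---is exactly the paper's, but both of your key ingredients have genuine gaps. The first is the domination $P^{\ast k}\geq \varepsilon Q$ with $Q$ induced by $\check\mu\ast\mu$. The measure $\mu$ is not assumed finitely supported (Theorem \ref{thm:lamps} only requires $\sum_g\mu(g)\abs{\mathbf{Br}_g}<\infty$), so $\check\mu\ast\mu$ may charge infinitely many group elements; there is then no single $k$ and no uniform $\varepsilon>0$ with $\mu^{\ast k}(g)\geq\varepsilon(\check\mu\ast\mu)(g)$ on all of $\supp{\check\mu\ast\mu}$, and your parenthetical ``finitely many relevant elements at any given pair of points'' is not correct either: infinitely many $g$ can send $y$ to $z$, and the comparison lemma needs one $\varepsilon$ uniform over \emph{all} pairs $(y,z)$. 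The paper sidesteps this entirely: by Proposition \ref{proposition:small support} and ping-pong it produces $f,g\in G$ generating a free group acting \emph{freely} on $\mathrm{Orb}_{\langle f,g\rangle}(x)$, and compares $\mu^{\ast n}$ with the uniform measure on the four elements $f^{\pm1},g^{\pm1}$ only, so the domination reduces to putting these four elements in $\supp{\mu^{\ast n}}$.

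The second gap is the transience of $Q$. The Markov chain on the orbit with kernel $Q$ is the \emph{forward} composition $k_nk_{n-1}\cdots k_1(x)$, whereas the process that converges almost surely to a boundary point (Theorem \ref{teo:dkn}\,(\ref{it:dkn2})) is the \emph{backward} composition $k_1\cdots k_n(x)$. These agree in distribution at each fixed time but are different processes: the forward chain does not converge, it equidistributes with respect to the stationary measure in the Ces\`aro sense. So recurrence of the orbit chain does not force the boundary point to equal $x$ on a positive-measure event, and the contradiction with atomlessness evaporates. The distributional identity only yields $Q^n(x,x)=\P[k_1\cdots k_n(x)=x]\to 0$, which is far from the summability $\sum_n Q^n(x,x)<\infty$ needed for transience (compare simple random walk on $\Z$). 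The paper instead gets transience of its comparison kernel for free: since $\langle f,g\rangle$ acts freely on the orbit, the uniform walk on $\{f^{\pm1},g^{\pm1}\}$ restricted to that orbit is the simple random walk on the $4$-regular tree, which is transient by classical arguments, and Proposition \ref{prop:comparison} then transfers transience to $\mu^{\ast n}$ and hence to $\mu$. You would need to replace both of your ingredients along these lines for the argument to close.
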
 
\begin{proof}
	Fix $x \in S^1$. Since $G$ acts proximally and minimally, using Proposition \ref{proposition:small support} we can find $f,g \in G$ such that there are disjoint intervals $I_1, I_2, J_1, J_2 \subset S^1$ with $x \not \in I_1 \cup I_2 \cup J_1 \cup J_2$ and 
	\[
	f(S^1 \setminus I_2) \subseteq I_1,\quad g(S^1 \setminus J_2) \subseteq J_1.
	\]
	
	By Klein's ping-pong lemma, $f$ and $g$ generate a free subgroup of $G$ and $\langle f, g\rangle$ acts freely on $\mathrm{Orb}_{\langle f, g \rangle}(x)$. Let $\tilde{\mu}$ be the uniform measure on $\{f,f^{-1}, g, g^{-1}\}$. The $\tilde{\mu}$-random walk on $\mathrm{Orb}_{\langle f,g\rangle}(x)$ starting at $x$ is transient since it corresponds to a simple random walk on an infinite tree of valence $4$. Let $n \in \N$ be such that $f,g\in \supp{\mu^{\ast n}}$. Then there exists $\epsilon > 0$ such that $\mu^{\ast n} \geq \epsilon \tilde{\mu}$, and we obtain from Proposition \ref{prop:comparison} that the $\mu^{\ast n}$-random walk on $\mathrm{Orb}_G(x)$ started at $x$ is transient. We conclude that the $\mu$-random walk on $\mathrm{Orb}_G(x)$ starting at $x$ is transient too.
\end{proof}

The following lemma is reminiscent of the stabilization of lamp configurations for random walks on wreath products; see the references in Subsection \ref{subsec:background} and also \cite[Lemma 7.2]{Stankov2021}. This is the only point in the construction of the breakpoint boundary where the moment condition on $\mu$ is used. For $\cC \in \R^{\mathbf{Br}}$ we denote $\mathrm{supp}(\cC) = \{x \in \mathbf{Br} \mid \cC(x) \neq 0\}$.

\begin{lemma}
	Suppose that $\sum_{g \in G} \mu(g) \abs{\cC_g}<\infty$. Then $\P$-almost surely and every $x\in \mathbf{Br}$ there is $N\ge 1$ such that $\cC_{w_n}(x)=\cC_{w_{n+1}}(x)$ for each $n\ge N$. Hence, the configurations $(\cC_{w_n})_{n\ge 0}$ converge pointwise to a map $\cC_\infty(\mathbf{w}) \in \R^{\mathbf{Br}}$.
\end{lemma}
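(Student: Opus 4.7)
The plan is a Borel--Cantelli argument at each fixed $x \in \mathbf{Br}$; since $\mathbf{Br}$ is countable, intersecting the resulting full-measure events over $x$ then yields simultaneous pointwise stabilization. By the cocycle identity \eqref{eq:equivariance},
\[
\cC_{w_{n+1}}(x) - \cC_{w_n}(x) = \cC_{g_{n+1}}\bigl(w_n^{-1}(x)\bigr) = \cC_{g_{n+1}}(v_n),
\]
where I write $v_n \coloneqq w_n^{-1}(x)$, so the event $A_n \coloneqq \{\cC_{w_{n+1}}(x) \neq \cC_{w_n}(x)\}$ is exactly $\{v_n \in \mathbf{Br}_{g_{n+1}^{-1}}\}$. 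Stabilization at $x$ means that $A_n$ occurs for only finitely many $n$.

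The sequence $(v_n)_{n \geq 0}$ satisfies $v_{n+1} = g_{n+1}^{-1}(v_n)$, so it is the $\overline{\mu}$-random walk on $\mathrm{Orb}_G(x) \subseteq \mathbf{Br}$ started at $x$. Since $\overline{\mu}$ is also nondegenerate, the ping--pong construction of Lemma \ref{lemma:transience} applies verbatim and this Markov chain is transient; let $G(x,y) \coloneqq \sum_{n \geq 0} \P[v_n = y]$ denote its Green function. Conditioning on $g_{n+1}$ and using its independence from $v_n$,
\[
\P[A_n] = \sum_{g \in G} \mu(g)\, \P\bigl[v_n \in \mathbf{Br}_{g^{-1}}\bigr] = \E[\psi(v_n)],
\]
where $\psi(y) \coloneqq \mu(\{g \in G : y \in \mathbf{Br}_{g^{-1}}\})$. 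A Fubini swap gives
\[
\sum_{y \in \mathbf{Br}} \psi(y) = \sum_{g \in G} \mu(g)\, |\mathbf{Br}_{g^{-1}}| = \sum_{g \in G} \mu(g)\, |\mathbf{Br}_g| < \infty
\]
by the moment hypothesis, so $\psi \in \ell^1(\mathbf{Br})$. A second Fubini swap then yields
\[
\sum_{n \geq 0} \P[A_n] = \sum_{y \in \mathbf{Br}} \psi(y)\, G(x,y) = \sum_{g \in G} \mu(g) \sum_{y \in \mathbf{Br}_{g^{-1}}} G(x,y).
\]

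The main obstacle is to show that this last series is finite. For each individual $g$ the inner sum converges because $\mathbf{Br}_{g^{-1}}$ is a finite set and each $G(x,y)$ is finite by transience, but this alone does not control the outer sum over $g$. The natural route is a uniform estimate $\sup_{y \in \mathrm{Orb}_G(x)} G(x,y) \leq K_x < \infty$, from which the moment hypothesis immediately gives $\sum_{y} \psi(y)\, G(x,y) \leq K_x \| \psi \|_{\ell^1} < \infty$. Such a bound can be extracted from the ping--pong data already at hand in Lemma \ref{lemma:transience}: the free subgroup $\langle f, g \rangle$ produced there provides a uniform-in-starting-point lower bound on the probability of escaping any single orbit point forever, which bounds the return probability $F(y,y)$ away from $1$ uniformly in $y$ and hence bounds $G(y,y) = (1-F(y,y))^{-1}$, which in turn dominates $G(x,y)$ by the standard Markov-chain relation $G(x,y) = F(x,y)\, G(y,y) \leq G(y,y)$.

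Once $\sum_n \P[A_n] < \infty$, the Borel--Cantelli lemma ensures that $\P$-almost surely only finitely many $A_n$ occur, so $(\cC_{w_n}(x))_n$ stabilizes; taking a countable intersection of these almost-sure events over $x \in \mathbf{Br}$ produces the pointwise convergence $\cC_{w_n} \to \cC_\infty(\mathbf{w}) \in \R^{\mathbf{Br}}$ asserted in the lemma.
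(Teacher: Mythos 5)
Your reduction is correct and matches the paper's up to the decisive estimate: Borel--Cantelli plus two Fubini swaps brings everything down to showing $\sum_{y}\psi(y)\,G(x,y)<\infty$, where $G(x,y)=\sum_{n\ge 0}\P[w_n^{-1}(x)=y]$. But the step you yourself flag as ``the main obstacle'' is exactly where the proof is incomplete, and the route you sketch does not go through as stated. You want $G(x,y)\le G(y,y)$ together with $\sup_{y}G(y,y)<\infty$, i.e.\ transience \emph{uniformly over all starting points $y$ in the orbit}. Lemma \ref{lemma:transience} does not provide this: the ping-pong pair there is chosen relative to the fixed basepoint (which is required to lie outside the ping-pong intervals so that the free group acts freely on \emph{its} orbit), so freeness --- and hence the tree comparison --- is not established at an arbitrary $y$. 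More seriously, Proposition \ref{prop:comparison} is purely qualitative (``transient if transient''): it does not convert the uniform return probability $1/3$ of the tree walk into any quantitative bound on the return probability or Green function of the $\mu$-chain, and the heuristic that the free subgroup gives a ``uniform lower bound on the probability of escaping forever'' is not a consequence of the comparison as stated (its proof is via Dirichlet forms, not a trajectory coupling). So $\sup_y G(y,y)<\infty$ is asserted, not proved.

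The paper closes this gap with a different and much cheaper observation: reverse the chain. Writing $p(u,v)=\P[w_1^{-1}(u)=v]$ (a doubly stochastic kernel) and $\overline p(u,v)=p(v,u)$, one has $G(x,y)=\sum_n p^{(n)}(x,y)=\sum_n\overline p^{(n)}(y,x)$, which is the expected number of visits \emph{to the fixed point $x$} of the $\overline p$-chain started at $y$. By the first-visit decomposition this is at most $\sum_n\overline p^{(n)}(x,x)=\sum_n p^{(n)}(x,x)$, which is finite by transience at the single point $x$ --- precisely what Lemma \ref{lemma:transience} gives. Hence $\sup_y G(x,y)\le G(x,x)$ with no uniformity over starting points needed, and $\sum_y\psi(y)G(x,y)\le G(x,x)\sum_{g}\mu(g)\abs{\mathbf{Br}_g}<\infty$ finishes the argument. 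Replace your sketched uniform-Green-function estimate with this reversal and the proof is complete.
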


\begin{proof} The equation
	\[
	\cC_{w_{n+1}} = \cC_{w_n} + S_{w_n}\cC_{g_{n+1}}
	\] implies that, for every $x \in \mathbf{Br}$, $\cC_{w_{n+1}}(x)= \cC_{w_n}(x)$ if and only if $w_{n}^{-1}(x)\notin \mathrm{supp}(\cC_{g_{n+1}})$. Thus, the configurations $\cC_{w_n}$ stabilize as $n \to \infty$ to some $\cC_{\infty}(\mathbf{w}) \in \R^{\mathbf{Br}}$ if for every $x \in \mathbf{Br}$, we have $w_n^{-1}(x) \in \mathrm{supp}(\cC_{g_{n+1}})$ for only finitely many $n \in \N$. By the Borel-Cantelli lemma, this holds whenever
	
	\begin{equation*}\label{eq:borelCantelli}
		\sum_{n \geq 0} \P\left[ w_n^{-1}(x) \in \cC_{g_{n+1}}\right] = \sum_{n \geq 0} \sum_{g \in G} \mu(g) \P\left[ w_n^{-1}(x) \in \cC_g\right] = \sum_{n \geq 0} \sum_{g \in G} \sum_{y \in \cC_g}\mu(g) \P\left[ w_n^{-1}(x) = y\right]
	\end{equation*} is finite for all $x \in \mathbf{Br}$. 
	
	The $\mu$-random walk $(w_n)_{n\geq 0}$ on $G$ induces a Markov chain on $\mathbf{Br}$ with transition probabilities $p(x,y) = \P\left[ w_1^{-1}(x) = y\right]$. Denote by $\overline{p}$ its reflected kernel, defined by $\overline{p}(x,y) = p(y,x)$ for all $x,y \in \mathbf{Br}$. Notice that for $x,y \in \mathbf{Br}$, the quantity $\sum_{n \geq 0} \overline{p}^{\ast n}(y,x)$ is the expected number of visits to $x$ of the Markov chain defined by $\overline{p}$ and starting at $y$. Hence
	\[
	\sum_{n \geq 0} \overline{p}^{\ast n}(y,x) = q(y,x) \sum_{n \geq 0} \overline{p}^{\ast n}(x,x)
	\] where $q(y,x)$ is the probability that the Markov chain defined by $\overline{p}$ and started at $y$ hits $x$. In particular,
	\[
	\sum_{n \geq 0} \overline{p}^{\ast n}(y,x) \leq \sum_{n \geq 0} \overline{p}^{\ast n}(x,x),
	\] so an upper bound for $\sum_{n \geq 0} \P\left[ w_n^{-1}(x) \in \cC_{g_{n+1}}\right] $ is given by
	\[
	\sum_{g \in G} \sum_{y \in \cC_g} \mu(g) \left( \sum_{n \geq 0} \overline{p}^{\ast n}(x,x) \right) = \left( \sum_{n \geq 0} \overline{p}^{\ast n}(x,x) \right) \left( \sum_{g \in G} \mu(g) \abs{\cC_g}\right).
	\] By Lemma \ref{lemma:transience}, the sum $\sum_{n \geq 0} \overline{p}^{\ast n}(x,x)$ is finite for any $x \in \mathbf{Br}$, and by hypothesis the term $\sum_{g \in G} \mu(g) \abs{\cC_g}$ is also finite. This proves the lemma.
\end{proof}

Denote by $\widetilde{\nu}$ the pushforward measure of $\P$ through $\cC_\infty \colon G^\N \to \R^{\mathbf{Br}}$. Then the space $(\R^{\mathbf{Br}}, \widetilde{\nu})$ is a $\mu$-boundary, that we call the \emph{breakpoint boundary}. One can prove the non-triviality of this boundary by following steps similar to those in the proof of \cite[Lemma 7.3]{Stankov2021}, or alternatively, this will follow from the proof of Theorem \ref{thm:lamps} in the next subsection.

\subsection{Bounded harmonic functions not coming from $(S^1, \nu)$} Recall that we denote by $(S^1, \nu)$ the $\mu$-boundary coming from the natural action of $G$ on the circle. In this subsection, we provide a family of harmonic functions defined through the breakpoint boundary $(\R^{\mathbf{Br}},\tilde{\nu})$ that cannot be obtained from $(S^1,\nu)$ via the Poisson transform. 
\begin{lemma} \label{lemma:gn}
	Let $G$ be a countable subgroup of $\mathrm{PAff}_+(S^1)$ whose action on $S^1$ is minimal, proximal and topologically nonfree, and let $\mu$ be a nondegenerate probability measure on $G$ such that $\sum_{g \in G} \mu(g) \abs{\mathbf{Br}_g}<\infty$. Then for every $n \in \N_+$ there exists a $\mu$-harmonic function $f_n \colon G \to [0,1]$ with $f_n(e_G) > 1/2$ and an element $a_n \in G$ such that
	\[
	\mathrm{diam}(\mathrm{supp}(a_n)) \xrightarrow[n \to \infty]{} 0 \quad \text{ and } \quad f_n(a_n) \xrightarrow[n \to \infty]{} 0.
	\]
\end{lemma}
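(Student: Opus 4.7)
The plan is to take $f_n$ to be the Poisson transform, through the breakpoint boundary $(\R^{\mathbf{Br}}, \widetilde{\nu})$, of a bump function evaluated at a single coordinate $y_n \in \mathbf{Br}$, and to choose $a_n$ as a high conjugate power of a fixed element $b$ whose support has a breakpoint at a fixed point. The technical core is a cocycle calculation showing that the whole ``signal'' at $y_n$ is carried by the power, while the conjugating element cancels; everything else is a tightness/tuning argument.

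First I would use topological nonfreeness to pick $b \in G \setminus \{e_G\}$ whose support is a proper closed interval $[p,q] \subsetneq S^1$. Since $b$ equals the identity just to the left of $p$ but on no right neighborhood of $p$, continuity at $p$ forces $b'(p^+) \neq 1$; hence $p \in \mathbf{Br}_b$, with $c_0 \coloneqq \cC_b(p) = -\log b'(p^+) \neq 0$ and $b(p) = p$. For each $n \geq 1$, Proposition \ref{proposition:small support} provides $g_n \in G$ with $\mathrm{diam}(g_n([p,q])) < 1/n$. I set $y_n \coloneqq g_n(p) \in \mathbf{Br}$ and, for an integer $k_n \geq 1$ to be specified, $a_n \coloneqq g_n b^{k_n} g_n^{-1}$. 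Then $\mathrm{supp}(a_n) \subseteq g_n([p,q])$ has diameter less than $1/n$ and $a_n(y_n) = y_n$.

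The key identity is $\cC_{a_n}(y_n) = k_n c_0$. Iterating $\cC_{uv} = \cC_u + S_u \cC_v$ and using $b(p) = p$, one expands
\[
\cC_{a_n}(y_n) = \cC_{g_n}(y_n) + \cC_{b^{k_n}}(p) + \cC_{g_n^{-1}}(p).
\]
The relation $\cC_{g_n g_n^{-1}} \equiv 0$ evaluated at $p$ gives $\cC_{g_n^{-1}}(p) = -\cC_{g_n}(y_n)$, producing the desired cancellation, while an analogous iteration at the fixed point $p$ of $b$ gives $\cC_{b^{k_n}}(p) = k_n c_0$. This is the place where it is crucial that $p$ is simultaneously a breakpoint of $b$ and a fixed point of $b$.

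Finally, fix a continuous $\phi \colon \R \to [0,1]$ with $\phi = 1$ on $[-1,1]$ and $\phi = 0$ off $[-2,2]$, and for a scale $M_n > 0$ set $F_n(\cC) \coloneqq \phi(\cC(y_n)/M_n)$. Its Poisson transform, a bounded harmonic function valued in $[0,1]$ by Theorem \ref{thm: isometry}, equals
\[
f_n(g) = \E\!\left[\phi\!\left(M_n^{-1}\bigl(\cC_g(y_n) + \cC_\infty(\mathbf{w})(g^{-1}y_n)\bigr)\right)\right].
\]
Setting $X_n \coloneqq \cC_\infty(\mathbf{w})(y_n)$ and using $a_n^{-1} y_n = y_n$, one has $f_n(e_G) = \E[\phi(X_n/M_n)]$ and $f_n(a_n) = \E[\phi((X_n + k_n c_0)/M_n)]$. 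Since $X_n$ is $\P$-almost surely finite, I would fix $\epsilon_n \to 0$ with $\epsilon_n \leq 1/4$ for every $n$, pick $M_n$ so that $\P[|X_n| \leq M_n] \geq 1 - \epsilon_n$, giving $f_n(e_G) \geq 3/4 > 1/2$, and then choose $k_n$ with $k_n|c_0| > 3 M_n$. On the event $\{|X_n| \leq M_n\}$ we then have $|X_n + k_n c_0| > 2 M_n$, so $\phi$ vanishes there and $f_n(a_n) \leq \P[|X_n| > M_n] \leq \epsilon_n \to 0$.
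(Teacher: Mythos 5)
Your proof is correct and follows essentially the same route as the paper: both take conjugates $g_n b^{k_n} g_n^{-1}$ of powers of a small-support element, run the identical cocycle computation to show the configuration value at the conjugated breakpoint equals $k_n\cC_b(p)$, and detect this with a bounded function on the breakpoint boundary. The only cosmetic difference is that the paper uses the indicator of a bounded open set $U_n$ where you use a rescaled bump $\phi(\cdot/M_n)$ chosen by tightness of $\cC_\infty(\mathbf{w})(y_n)$; both work for the same reason.
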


\begin{proof}
	Consider an element $a \in G \setminus \{e_G\}$ such that $\mathrm{supp}(a)$ is strictly contained in $S^1$ (such an element exists because the action of $G$ is topologically nonfree). Fix $y \in \partial ( \mathrm{supp}(a) )$, so that $y \in \mathbf{Br}$. Since the action of $G$ on $S^1$ is minimal and proximal, there exists a sequence $\{t_n\}_{n \geq 0} \subset G$ such that $\mathrm{diam}(t_n(\mathrm{supp}(a))) \leq 1/n$. Since the measure $\tilde{\nu}$ is nonzero, for every $n \in \N_+$ there exists a bounded open set $U_n \subset \R$ such that the bounded function $f_n \colon G \to [0,1]$ defined on $g \in G$ by
	\[
	f_n(g) = \P_g\left[ \left\{ \mathbf{w} \in G^\N : \cC_\infty(\mathbf{w})(t_n(y)) \in U_n\right\} \right]
	\] satisfies $f_n(e_G) > 1/2$. Note that the event \(\{\mathbf{w} \in G^\N  :\cC_\infty(\mathbf{w})(t_n(y)) \in U_n \}\) is shift-invariant up to $\P$-measure zero, and hence the function $f_n$ is $\mu$-harmonic.
	
	For each $n \in \N_+$ define $b_n = t_na t_n^{-1}$. Then we have 
	\(
	f_n(b_n^j) = \P\left[ \cC_\infty(b_n^j\mathbf{w})(t_n(y)) \in U_n \right]
	\) 
	for every $j \in \N_+$. Using Equation \eqref{eq:equivariance} we get
	\begin{equation} \label{eq:cC}
		\cC_\infty(b_n^j\mathbf{w})(t_n(y)) = \cC_{b_n^j}(t_n(y)) + S_{b_n^j} \cC_\infty(\mathbf{w})(t_n(y)) = \cC_{b_n^j}(t_n(y)) + \cC_\infty(\mathbf{w})(t_n(y)),
	\end{equation} where in the last equality we used that $b_n^j$ fixes $t_n(y)$. By iterating Equation \eqref{eq:equivariance} and using that $a^{-j}$ fixes $y$, we see that
	\begin{align*}
		\cC_{b_n^j}(t_n(y)) = \cC_{t_n a^j t_n^{-1}}(t_n(y)) & = \cC_{t_na^j}(t_n(y)) + S_{t_na^j}\cC_{t_n^{-1}}(t_n(y)) \\ 
		& = \cC_{t_na^j}(t_n(y)) + \cC_{t_n^{-1}}(a^{-j}(y)) \\
		& = \cC_{t_na^j}(t_n(y)) + \cC_{t_n^{-1}}(y) \\
		& = \cC_{t_n}(t_n(y)) + S_{t_n}\cC_{a^j}(t_n(y)) +  \cC_{t_n^{-1}}(y) \\
		& = \cC_{t_n}(t_n(y)) + \cC_{a^j}(y) +  \cC_{t_n^{-1}}(y)  = j\cC_{a}(y).
	\end{align*}
	The above together with Equation \eqref{eq:cC} shows that
	\( f_n(b_n^j) = \P\left[ j\cC_a(y)  + \cC_\infty(\mathbf{w})(t_n(y))  \in U_n \right].\) Next, note that $\cC_a(y)\neq 0$ by the choice of $y$. Since $U_n$ is bounded, there exists $j_n \in \N_+$ sufficiently large so that $f_n(b_n^{j_n}) < 1/n$. Denote $a_n \coloneqq b_n^{j_n}$. Then the diameter of
	\(
	\mathrm{supp}(a_n) = \mathrm{supp}(b_n) = t_n(\mathrm{supp}(a))
	\) goes to $0$ as $n\to \infty$, and hence $f_n(a_n)\xrightarrow[n\to \infty]{}0$.
\end{proof}
We can now present the proof of Theorem \ref{thm:lamps}. 
\begin{proof}[Proof of Theorem \ref{thm:lamps}] 
	Looking for a contradiction, let us suppose that the breakpoint boundary is a $G$-equivariant quotient of $(S^1, \nu)$. For every $n \in \N_+$ consider $\mu$-harmonic functions $f_n \colon G \to [0,1]$ and elements $a_n \in G$ as in Lemma \ref{lemma:gn}. Then there exist functions $h_n \in L^\infty(S^1, \nu)$ such that
	\[
	f_n(g) = \int_{S^1}h_n(g(x)) \dd \nu(x) \text{ for all }g\in G.
	\] Since the Poisson transform is an isometry (Theorem \ref{thm: isometry}), we have that $\norm{h_n}_\infty \leq 1$.
	
	Write $I_n = \mathrm{supp}(a_n)$, so $h_n(a_n(x)) = h_n(x)$ whenever $x \not  \in I_n$ and 
	\begin{align*}
		\abs{f_n(a_n) - f_n(e_G)} & \leq \int_{S^1 \setminus I_n}\abs{h_n(a_n(x)) - h_n(x)} \dd \nu(x) + \int_{I_n} \abs{h_n(a_n(x)) - h_n(x)} \dd \nu(x) \\
		& = \int_{I_n} \abs{h_n(a_n(x)) - h_n(x)} \dd \nu(x) \leq 2 \nu(I_n) \norm{h_n}_\infty \leq 2 \nu(I_n)\xrightarrow[n\to \infty]{}0.
	\end{align*} However, from Lemma \ref{lemma:gn} we have that $\liminf_{n\to \infty}\abs{f_n(a_n) - f_n(e_G)} \geq 1/2$. This is a contradiction.
\end{proof}

\begin{rem}
	When $G$ is Thompson's group $T$ we can provide a single $\mu$-harmonic function that does not arise from $(S^1,\nu)$. Indeed, in this case $\mathbf{Br} = \Z[1/2]/\Z$ and, after defining configurations using logarithms in base 2, we have a $\mu$-boundary $(\Z^{\Z[1/2]/\Z}, \widetilde{\nu})$ where $\Z^{\Z[1/2]/\Z}$ is the space of functions from $\Z[1/2]/\Z$ to $\Z$. Pick any $y \in \Z[1/2]/\Z$ and a $k \in \Z$ so that the function defined by
	\(
	f(g) = \P_g\left[\cC_\infty(\mathbf{w})(y) = k \right]
	\) for each $g\in G$ satisfies $f(e_G) > 0$.
	
	Consider for every $n\ge 1$ the element $a_n\in T$ defined by
	\[
	a_n(x) = \begin{cases} y + 2^n(x - y) & \text{ if } y \leq x < y + 2^{-2n} \\
		y + 2^{-n} - 2^{-3n} + 2^{-n}(x-y) & \text{ if }y + 2^{-2n} < x < y + 2^{-2n} + 2^{-n}\\
		x & \text{ elsewhere,} \end{cases}
	\] see Figure \ref{fig:gn}.
	\begin{figure}[h!]
		\centering
		\begin{tikzpicture}[scale = 4, line width=0.6pt]
			\draw[-] (0,0) -- (0,1) -- (1,1) -- (1,0) -- (0,0);
			\node[below, scale=0.5] at (0,0) {\LARGE $0$};
			\node[below, scale=0.5] at (1,0) {\LARGE $1$};
			\node[left, scale=0.5] at (0,1) {\LARGE $1$};
			
			\draw[domain=0:1/2, smooth, variable=\x, red] plot ({\x}, {\x});
			\draw[domain=1/2:1/2 + 1/2^8, smooth, variable=\x, red] plot ({\x}, {1/2 + 16*(\x- 1/2)});
			\draw[domain=1/2 + 1/2^8:1/2 + 1/2^8 + 1/2^4, smooth, variable=\x, red] plot ({\x}, {1/2 + 1/2^4 - 1/2^9 + 1/2^4*(\x - 1/2)});
			\draw[domain=1/2 + 1/2^8 + 1/2^4:1, smooth, variable=\x, red] plot ({\x}, {\x});
			
			\draw[domain=0:1/2, smooth, variable=\x, green!60!black] plot ({\x}, {\x});
			\draw[domain=1/2:1/2 + 1/2^6, smooth, variable=\x, green!60!black] plot ({\x}, {1/2 + 8*(\x- 1/2)});
			\draw[domain=1/2 + 1/2^6:1/2 + 1/2^6 + 1/2^3, smooth, variable=\x, green!60!black] plot ({\x}, {1/2 + 1/2^3 - 1/2^9 + 1/2^3*(\x - 1/2)});
			\draw[domain=1/2 + 1/2^6 + 1/2^3:1, smooth, variable=\x, green!60!black] plot ({\x}, {\x});
			
			\draw[domain=0:1/2, smooth, variable=\x, blue] plot ({\x}, {\x});
			\draw[domain=1/2:1/2 + 1/16, smooth, variable=\x, blue] plot ({\x}, {1/2 + 4*(\x- 1/2)});
			\draw[domain=1/2 + 1/16:1/2 + 1/16 + 1/4, smooth, variable=\x, blue] plot ({\x}, {1/2 + 1/4 - 1/64 + 1/4*(\x - 1/2)});
			\draw[domain=1/2 + 1/16 + 1/4:1, smooth, variable=\x, blue] plot ({\x}, {\x});

		\end{tikzpicture}
		\caption{The maps $a_n$ for $y = 1/2$ and $n = 2$ (blue), $n = 3$ (green) and $n = 4$ (red).}
		\label{fig:gn}
	\end{figure}
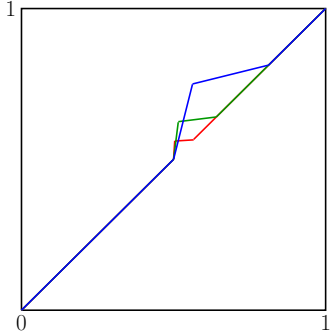
	
	Then, we have that
	\begin{itemize}
		\item $a_n (y) = y$,
		\item $\mathrm{supp}(a_n)$ is a dyadic interval containing $y$ and of length $2^{-n} + 2^{-2n}$, and
		\item the derivative jump of $a_n$ at $y$ is equal to $2^{n}$.
	\end{itemize}
	This implies that both $\mathrm{diam}(\mathrm{supp}(a_n))$ and $f(a_n)$ converge to $0$ as $n$ goes to infinity. From this point, one can continue just as in the proof of Theorem \ref{thm:lamps} to conclude that there is no $h \in L^\infty(S^1, \nu)$ such that
	\(
	f(g) = \int_{S^1}h(g(x)) \dd \nu(x)
	\) for all $g \in G$.
\end{rem}
\bibliographystyle{alpha}
\bibliography{biblio}
\end{document}